\pgfplotsset{width=10cm,compat=1.9}
\tikzset{middlearrow/.style={
        decoration={markings,
            mark= at position 0.5 with {\arrow{#1}} ,
        },
        postaction={decorate}
    }
}
\newtheorem{theorem}{Theorem}[section]
\newtheorem{lemma}{Lemma}[section]
\newtheorem{proposition}{Proposition}[section] 
\newtheorem{corollary}{Corollary}[section]
\newtheorem{algorithm}{Algorithm}[section]
\newtheorem{assumption}{Assumption}[section]
\newtheorem{definition}{Definition}[section]
\newtheorem{remark}{Remark}[section] 
\DeclareMathOperator{\sgn}{sgn}
\DeclareMathOperator*{\argmin}{arg\,min}
\DeclareMathOperator*{\argmax}{arg\,max}
\begin{document}

\title{Speed Up Zig-Zag}
\author{G. Vasdekis and G. O. Roberts}

\begin{abstract}
{The Zig-Zag process is a Piecewise Deterministic Markov Process, efficiently used for simulation in an MCMC setting. As we show in this article, it fails to be exponentially ergodic on heavy tailed target distributions. We introduce an extension of the Zig-Zag process by allowing the process to move with a non-constant speed function $s$, depending on the current state of the process. We call this process Speed Up Zig-Zag (SUZZ). We provide conditions that guarantee stability properties for the SUZZ process, including non-explosivity, exponential ergodicity in heavy tailed targets and central limit theorem. 
Interestingly, we find that using speed functions that induce explosive deterministic dynamics may lead to stable algorithms that can even mix faster. 
We further discuss the choice of an efficient speed function by providing an efficiency criterion for the one-dimensional process and we support our findings with simulation results.

\smallskip
\noindent \textbf{Keywords:} Piecewise Deterministic Markov Process, Markov Chain Monte Carlo, Exponential Ergodicity, Central Limit Theorem.
\noindent \textbf{MSC2020 subject classifications:} Primary 60J25 ; Secondary 65C05 , 60F05.
}
\end{abstract}

\maketitle


\section{Introduction}
Piecewise deterministic Markov processes (PDMP) have recently emerged as a new way to construct MCMC algorithms. Traditional MCMC algorithms employ discrete time Markov Chains to generate samples from a target distribution which is invariant for the chain, and subsequently use these samples to numerically estimate intractable expectations of functions of interest.
 By construction, standard MCMC algorithms like Random Walk Metropolis \cite{metropolis:53}, MALA  \cite{besag.mala:94}, etc. are time-reversible with respect to their target distribution.
 However, there is by now substantial evidence that reversible MCMC methods can be  significantly outperformed (in terms of mixing times and variances of estimators) by non-reversible ones 
  (see for example \cite{hwang.hwang.sheu:93, diaconis.holmes.neal:00, sun.gomez.schmidhuber:12, chen.hwang:13, lelievre.nier.pavliotis:13, bierkens:14, duncan.lelievre.pavliotis:15}).
Some PDMPs such as the Bouncy Particle Sampler \cite{cote.vollmer.doucet_bps:2018} and the Zig-Zag sampler \cite{bierkens.roberts_superefficient:2019} can be implemented directly and free from numerical error,
 providing a source of genuinely non-reversible MCMC algorithms. 

 The one dimensional Zig-Zag algorithm appeared in \cite{bierkens.roberts_scaling:2017} as a scaling limit of the Lifted Metropolis-Hastings (see \cite{turitsun.chertkov.vucelja:11,diaconis.holmes.neal:00}) applied to the Curie-Weiss model (see \cite{levin.luczak.peres:07}), although a simpler version of the process was introduced in \cite{goldstein:51} as the telegraph process (see also \cite{kac:74, fontbona.guerin.malrieu:16, fontbona.guerin.malrieu:12}). 
 The process was later extended in higher dimensions in \cite{bierkens.roberts_superefficient:2019} and has been proposed as a PDMP which can be used as an MCMC algorithm to target posterior distributions (see also \cite{fearnhead.bierkens.pollock.roberts:18, vanetti.cote.deligiannidis.doucet:17}). In \cite{bierkens.roberts_superefficient:2019}, the authors also introduce some variants of the algorithm that use the technique of sub-sampling, improving computational efficiency when the target distribution is obtained from a Bayesian analysis involving a large data set. Further literature on the topic includes \cite{ bierkens.duncan:17, bierkens.cote.doucet.duncan.fearnhead.lienart.roberts.vollmer:18, bierkens.nyquist.schlottke:19, bierkens.lunel:19, bierkens.grazzi.van-der-meulen.chauer:20, bierkens.grazzi.kamatani.roberts:20}.
 
\cite{bierkens.roberts.zitt:2019} proves ergodicity and exponential ergodicity of the Zig-Zag process in arbitrary dimension. A crucial assumption required for exponential ergodicity  in that work is that the target density has exponential or lighter tails. This paper will demonstrate the converse: the Zig-Zag sampler fails to be exponentially ergodic when the target distribution has tails  thicker than any exponential distribution, i.e. it is a heavy tailed. In fact, polynomial rates of convergence have been proven in \cite{andrieu.dobson.wang:21} for the process in arbitrary dimension, while \cite{vasdekis.roberts.2:21} proves tight polynomial rates of convergence in the total variation distance, for the one-dimensional process, when the target has tails that decay like a Student distribution. 

In order to address the problem of slow mixing on heavy tails, we introduce a variant of the Zig-Zag process, called Speed Up Zig-Zag (SUZZ). The idea behind the process has a similar spirit to the work of \cite{livingstone:21} and \cite{neklyudov.bondesan.welling:21}. In our case, instead of only permitting the process to move with unit speed, we allow it to have a positive position-dependant speed. This assists the exploration of the tails and subsequent return to the high density areas of the distribution more rapidly. We note that if the speed function is large enough, the solution to the ODE that governs the behaviour of the SUZZ process may potentially explode in finite time. Large part of the theory in this article focuses on proving that  such dynamics are mathematically acceptable in the context of MCMC. Furthermore, for carefully chosen speed functions, these ODEs and the induced SUZZ process can be numerically simulated exactly.
 Although explosive deterministic dynamics have been mentioned in the past (see for example, Example 2.1.3 of \cite{riedler:11}), to the best of our knowledge, this is the first use of explosive dynamics within the literature of PDMPs for MCMC.

The rest of this paper is organised as follows. In Section 2 we recall the definition of the Zig-Zag process and we prove its lack of exponential ergodicity on heavy tails. Motivated by this slow convergence result, in Section 3 we define the Speed Up Zig-Zag (SUZZ) process and we establish stability and convergence properties. Theorem \ref{non.explo:1} proves that under certain conditions on the speed function, the process is non-explosive. Theorem \ref{inv.measure.speed.up:1} proves that the process has the distribution of interest as invariant. Theorem \ref{geom.ergo.suzz:1} proves that the process is exponentially ergodic and Theorem \ref{skeleton.clt:0} that it satisfies a Central Limit Theorem. Theorem \ref{geom.ergo.light.tails:1} proves that when the target has light tails, the SUZZ process is exponentially ergodic, essentially under the same conditions as in the original Zig-Zag, while Proposition \ref{special.case.theorem:1} proves exponential ergodicity of the SUZZ process for a family of heavy tailed distributions, with some specific, practical choices of speed functions. Corollary \ref{original.zz.geom.ergo:1} proves exponential ergodicity of the original Zig-Zag on light tailed targets, relaxing the assumptions of \cite{bierkens.roberts.zitt:2019}. Furthermore, focusing on the one-dimensional SUZZ process, Theorem \ref{uniform.ergodicity:1} proves that under explosive deterministic dynamics the process is uniformly ergodic and provides weaker assumptions to prove that the process is exponentially ergodic.
In Section 4 we focus on the one-dimensional process and discuss how the choice of the speed function can improve algorithmic efficiency. Using Proposition \ref{efficiency.proposition:1} we write the asymptotic variance of the one-dimensional process as a function of the speed function, which allows us to introduce a minimisation problem characterising 
the optimal speed function for one-dimensional SUZZ within an MCMC context. Finally in Section 5 we describe some numerical results, comparing the efficiency of different algorithms on one-dimensional and twenty-dimensional distributions. The Appendices contain the proofs of the main results along with some other useful information (e.g. how to formally construct the process or how to solve the deterministic ODE and construct the deterministic paths of the process). 

\section{The original Zig-Zag Process}\label{definition.algorithms:1}
Here we give a brief introduction to the Zig-Zag process (which in this article we will refer to as original Zig-Zag), recalling some basic properties and  proving it has a sub-exponential convergence  rate for heavy-tailed distributions. The $d$-dimensional original Zig-Zag process $(Z_t)_{t \geq 0}=((X_t,\Theta_t))_{t \geq 0}$ is a PDMP with state space $E=\mathbb{R}^d \times \{ \pm 1 \}^d$. One can think of the process as a particle moving in $\mathbb{R}^d$ along one of $2^d$ possible straight lines. When the process is at point $(x,\theta) \in E$ the particle is at point $x \in \mathbb{R}^d$ and moves with constant velocity $\theta \in \{ \pm 1 \}^d$. This means that the process moves according to the ODE
\begin{equation}\label{zig.zag.ode:01}
\left\{ \begin{array}{l}
\dfrac{dX_t}{dt}=\theta \vspace{1.5 mm}\\ 
\dfrac{d\Theta}{dt}=0.
\end{array} \right.
\end{equation}
To each of the $d$ coordinates, we let $T_i$ denote the first event of a non-homogeneous Poisson Process of rate $m_i(t)=\lambda_i^{ZZ}(x+ \theta t,\theta)$, for $i=1,...,d$ and for some function $\lambda_i^{ZZ} :E \rightarrow [0,+\infty)$. We assume that all $m_i$ are locally integrable. 
Let $T=\min_{i \in \{ 1,...,d \}}T_i$ and $j=\argmin_{i \in \{ 1,...,d \}} \{ T_i \}$. The process moves with velocity $\theta$ until time $T$ at which time its velocity changes to $F_j(\theta)$, where 
\begin{equation}
\begin{cases}\label{multi.zz.flip.ssymbol:1}
F_j(\theta)_j=-\theta_j\\
F_j(\theta)_i=\theta_i \text{ for } i\neq j,
\end{cases}
\end{equation}
and proceeds to move again with constant velocity $F_j(\theta)$ until it switches again, etc.

\begin{figure}[h]
\centering
\subfloat[T=100]{\includegraphics[height=5cm, width=6.4cm , trim= 2in 3in 2in 3.5in]{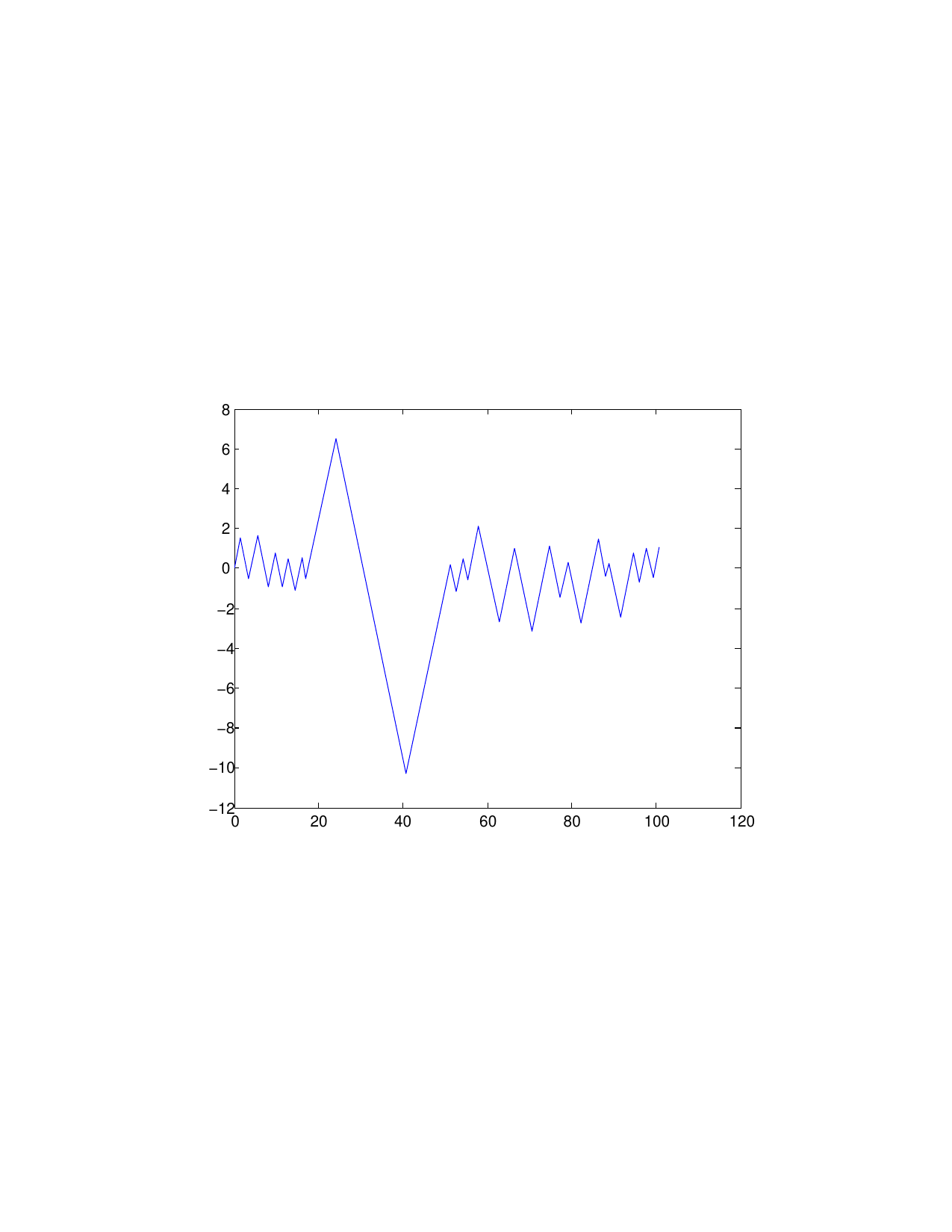}}\label{fig:zz.normal.d1}
\hfill
\subfloat[T=10000]{\includegraphics[height=5cm, width=6.4cm , trim= 2in 3in 2in 3.5in]{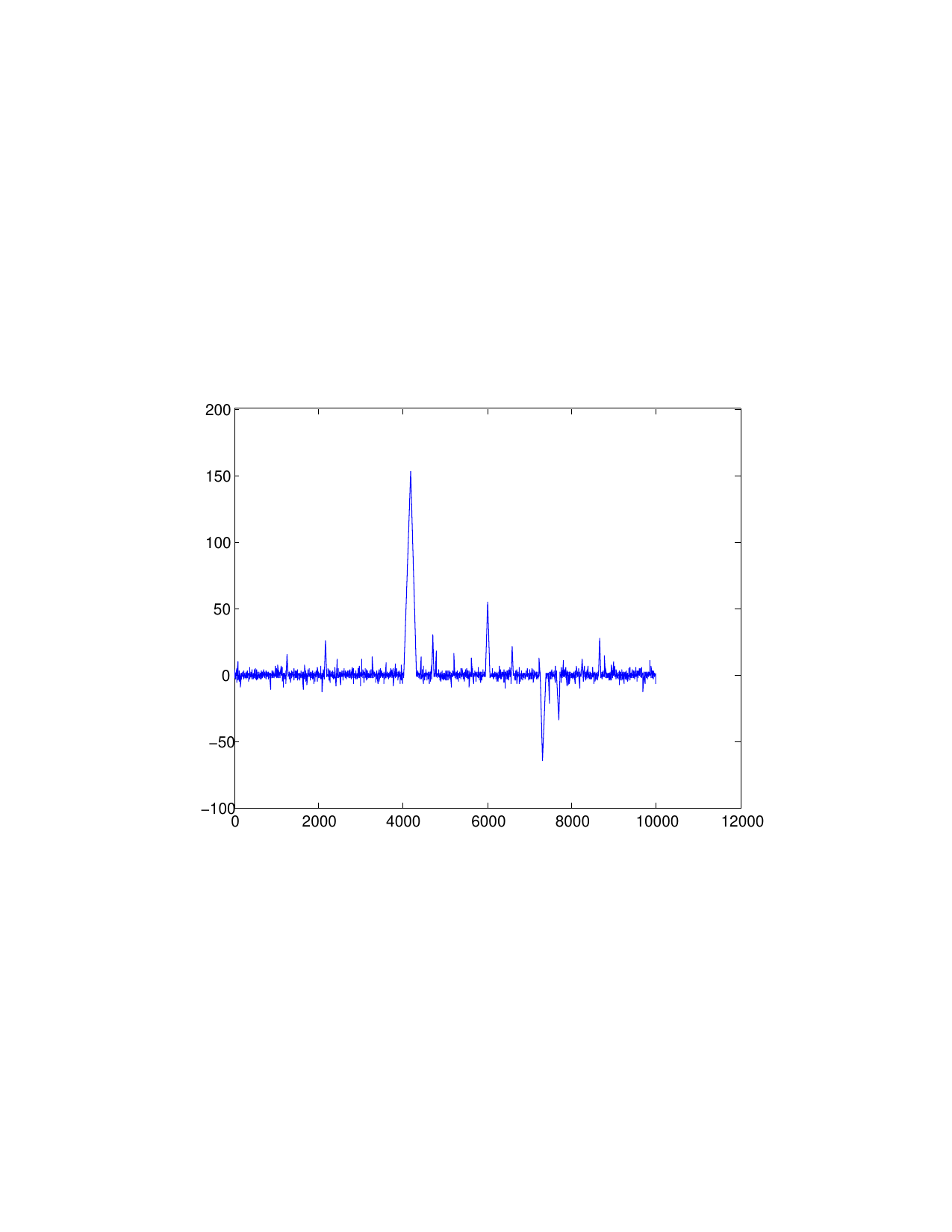}}\label{fig:zz.exponential.d1}
\caption{Trace plots of original Zig-Zag process, ran until time $T$, targeting one-dimensional Cauchy distribution.}\label{fig:zz.normal.exponential.d1}
\end{figure}
In \cite{bierkens.roberts_superefficient:2019} the goal was to target a probability measure on $E$ of the form \begin{equation}\label{zz.inv:3}
\mu(dx,d\theta)=\dfrac{1}{2^dH}\exp\{ -U(x) \}dxd\theta
\end{equation}
for some $U \in C^1$ with
$H=\int_{\mathbb{R}^d}\exp \{ -U(x) \}dx<\infty$. It is proven that the original Zig-Zag process has $\mu$ as invariant distribution when the rate functions are chosen according to 
\begin{equation}\label{zz.inv:2}
\lambda_i^{ZZ}(x,\theta)=[\theta_i \partial_iU(x)]^++\gamma_i(x,\theta_{-i}),
\end{equation}
where we write $\partial_i$ to denote the operator of the partial derivative on the $i$ coordinate, $a^+=\max\{ a,0 \}$ and $\gamma_i$ is a non-negative function that does not depend on the $i$ component of $\theta$. The special case where $\gamma_i(x,\theta_{-i})=0$ for all $x, i$ is known as the {\em canonical Zig-Zag}.

\begin{remark}
We note that
in many MCMC applications the goal is to target a measure 
\begin{equation}\label{zz.inv:4}
\pi(dx)=1/H \exp\{ -U(x) \}dx
\end{equation}
in $\mathbb{R}^d$. Technically, the original Zig-Zag process targets a measure $\mu$ on $E=\mathbb{R}^d \times \{ -1,+1 \}^d$, whose marginal distribution on $\mathbb{R}^d$ is $\pi$ and whose marginal distribution on $\{ -1,+1 \}^d$ is the uniform. One can then use the projection of the process on $\mathbb{R}^d$ to generate samples from the measure of interest $\pi$. Throughout this work we shall denote $\pi$ the measure of interest in $\mathbb{R}^d$ and $\mu$ the measure on $E$ given by (\ref{zz.inv:3}).
\end{remark}

\cite{bierkens.roberts.zitt:2019} demonstrates that assuming that $U$ grows at least linearly in the tails and appropriate smoothness conditions hold,
the original Zig-Zag process $(Z_t)_{t \geq 0}$ converges to $\mu$ exponentially fast, i.e.
 there exist $M:E \rightarrow [1,+\infty)$ and $\rho <1$ such that for any $(x,\theta) \in E$
\begin{equation}\label{light.tails.geometric.ergodicity:1}
\| \mathbb{P}_{x,\theta}(Z_t \in \cdot)-\mu(\cdot) \|_{TV}\leq M(x,\theta)\rho^t \ .
\end{equation}
If (\ref{light.tails.geometric.ergodicity:1}) holds, we say that the process is exponentially ergodic.

 However \cite{bierkens.roberts.zitt:2019} does not cover the case where $U$ grows sub-linearly. In this scenario traditional MCMC algorithms based on random walk or Langevin proposals are known to converge at sub-exponential rates, see for example \cite{jarner:00, roberts.tweedie:96, livingstone.betancourt.byrne.girolami:16}. We will observe similar
 behaviour for the original Zig-Zag sampler. Figure \ref{fig:zz.qqplots.students.d1} provides the Q-Q plots of one-dimensional canonical Zig-Zag processes, targeting Student distributions with three different degrees of freedom. Each algorithm runs until $N=10^4$ switches of direction occur. The figure indicates that the process is less stable when targeting a Student distribution with lower degrees of freedom, which has more mass at the tails. 
 This instability is characterised by infrequent and unstable (heavy-tailed) excursions.
 In this article we will be mainly dealing with distributions that assign more mass at the tails than any exponential distribution. We give the following definition.
 
 \begin{definition}\label{def.heavy.tails:1}
 We say that a measure $\pi$ on $\mathbb{R}^d$ is heavy-tailed if for any $a>0$, if $B(0,R)$ is the ball of radius $R$, centered at $0$, then
 \begin{equation}
     \lim_{R  \rightarrow +\infty} \pi(B(0,R)^c) \exp\{ a R \} =+\infty.
 \end{equation}
 \end{definition}
 
 The following simple negative result for the original Zig-Zag sampler on heavy-tailed distributions was made known to us in personal correspondence with Professor Anthony Lee.
\begin{theorem}[Non-Exponential Ergodicity]\label{non.geometric.ergodicity:0}
Suppose that the original Zig-Zag targets a heavy tailed distribution. Then the process is not exponentially ergodic.
\end{theorem}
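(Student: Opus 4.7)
The plan is to argue by contradiction, reducing exponential ergodicity to a statement about exponential moments of the first switching time, and then showing this moment must be infinite whenever the target is heavy tailed. Assume for contradiction that the Zig-Zag is exponentially ergodic. By the standard equivalence between $V$-geometric ergodicity of a $\varphi$-irreducible continuous-time Markov process and an exponential Foster--Lyapunov drift condition (in the spirit of Down, Meyn and Tweedie), there exist a compact set $C\subset E$ and $\epsilon>0$ such that
\[
\mathbb{E}_{(x,\theta)}\bigl[e^{\epsilon\tau_C}\bigr]<+\infty \qquad \text{for every } (x,\theta)\in E,
\]
where $\tau_C$ is the first hitting time of $C$. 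It therefore suffices to exhibit one starting point at which the left-hand side is infinite for every $\epsilon>0$.

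Next I would localise the task to the first switching time $T_1$. Pick $(x,\theta)$ with $|x|$ large enough that $x$ lies outside $C$ and $U$ is coordinatewise monotone in the relevant directions, and take $\theta$ so that the deterministic flow $x+\theta t$ moves strictly away from $C$ for all $t\geq 0$ (e.g.\ $x=(L,\ldots,L)$ with $L$ large and $\theta=(1,\ldots,1)$). Before the first switch the particle lies outside $C$, hence $\tau_C\geq T_1$. For the canonical Zig-Zag each $\theta_i\partial_iU(x+\theta s)$ is non-negative along this trajectory, so the positive-part operations disappear and the integrated rate telescopes:
\[
\mathbb{P}_{(x,\theta)}(T_1>t)=\exp\Bigl\{-\int_0^t\sum_{i=1}^d[\theta_i\partial_iU(x+\theta s)]^+\,ds\Bigr\}=\exp\{U(x)-U(x+\theta t)\}=\frac{\pi(x+\theta t)}{\pi(x)},
\]
the normalising constants of $\pi$ cancelling out.

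The heavy-tail hypothesis gives that $e^{\epsilon|y|}\pi(y)$ is non-integrable for every $\epsilon>0$, and choosing $\theta$ in a direction where this non-integrability is witnessed (for an isotropic heavy-tailed target any $\theta\in\{\pm 1\}^d$ works), the ray integral $\int_0^\infty e^{\epsilon t}\pi(x+\theta t)\,dt$ is also infinite. Combined with the identity $\mathbb{E}_{(x,\theta)}[e^{\epsilon T_1}]=1+\epsilon\int_0^\infty e^{\epsilon t}\,\mathbb{P}_{(x,\theta)}(T_1>t)\,dt$, this yields $\mathbb{E}_{(x,\theta)}[e^{\epsilon T_1}]=+\infty$ for every $\epsilon>0$, and therefore $\mathbb{E}_{(x,\theta)}[e^{\epsilon\tau_C}]=+\infty$, contradicting the consequence of exponential ergodicity established in the first step.

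The delicate point I expect is making the heavy-tail assumption precise enough in the multivariate setting to guarantee at least one admissible direction $\theta\in\{\pm1\}^d$ along which the tail integral diverges and along which the monotonicity of $\theta_i\partial_iU$ persists outside a compact region; for the standard heavy-tailed families (Cauchy, Student's $t$, etc.) both features are immediate, and in the one-dimensional case the whole argument collapses to a transparent calculation with $\mathbb{P}(T_1>t)=\pi(x+t)/\pi(x)$.
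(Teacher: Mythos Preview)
Your argument is essentially sound, but it takes a considerably more circuitous route than the paper's and implicitly needs extra structure on $U$ that the paper's proof does not. The paper exploits a one-line finite-propagation observation: since the Zig-Zag moves at unit speed, from the starting point $(0,+1)$ the process cannot reach $\{x:\|x\|>t\}$ by time $t$, so
\[
\|\mathbb{P}_{0,+1}(Z_t\in\cdot)-\mu(\cdot)\|_{TV}\ \geq\ \mu\bigl(B(0,t)^c\bigr),
\]
and heavy tails by definition mean the right-hand side cannot decay like $M\rho^t$. No drift conditions, return-time moments, or computations with $T_1$ are needed, and no monotonicity of $U$ along rays is required.

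By contrast, your route via Down--Meyn--Tweedie and the explicit law of $T_1$ is correct in spirit but carries two caveats you already flag: the identity $\mathbb{P}_{(x,\theta)}(T_1>t)=\pi(x+\theta t)/\pi(x)$ needs each $\theta_i\partial_iU$ to be non-negative along the outgoing ray (otherwise the positive parts give only an upper bound, which is the wrong direction for your purpose), and in dimension $d>1$ one must argue that heaviness of the full tail forces heaviness along at least one lattice direction $\theta\in\{\pm1\}^d$. Both are easy for isotropic Student-type targets but are genuine additional hypotheses in general. What your approach buys is some mechanistic insight---it pinpoints that the culprit is the lack of exponential moments for the first switching time in the tail---whereas the paper's argument is model-agnostic and immediately transfers to any unit-speed PDMP (Bouncy Particle Sampler, etc.), as the authors in fact remark.
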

\begin{proof}[Proof of Theorem \ref{non.geometric.ergodicity:0}]
Suppose that the original Zig-Zag starts from $x=0$, $\theta \in \{ -1,+1 \}^d$. For any $t>0$, let $A_t= \{ x: \| x \|_2 >t \}$ be the complement of the ball of radius $t$ and let us fix a time $t>0$. Note that the process will always move with some velocity $\eta \in \{ -1,+1 \}^d$, and note that for any such $\eta$ we have $\| \eta \|_2 =\sqrt{d}$. Since the process moves with constant speed equal to $\sqrt{d}$, the original Zig-Zag will not have hit $A_{\sqrt{d}t}$ by time $t$. Therefore,
\begin{equation*}
\|\mathbb{P}_{0,\theta}(X_t \in \cdot)-\pi(\cdot)\|_{TV} \geq 
\left | \mathbb{P}_{0,\theta}\left(X_t \in A_{\sqrt{d}t}\right)-\pi\left(A_{\sqrt{d}t}\right) \right |=\pi\left(A_{\sqrt{d}t}\right)=\pi\left(B\left(0,\sqrt{d}t\right)^c\right).
\end{equation*}
So if we were  to have exponential ergodicity, we would have that there exists $M>0$ and $\rho<1$ such that for all $t>0$,
\begin{equation*}
\pi(B(0,t)^c) \leq \|\mathbb{P}_{0,\theta}(X_{\sqrt{d}^{-1}t} \in \cdot)-\pi(\cdot)\|_{TV} \leq M \left( \rho^{\sqrt{d}^{-1}} \right)^t,
\end{equation*}
 which creates a contradiction as $\pi$ has heavy tails.
\end{proof}

\begin{figure}[h]
\centering
\subfloat[Student(1) distribution ]{\includegraphics[scale=1.5, width=4.4cm , trim= 2in 3in 2in 3.5in]{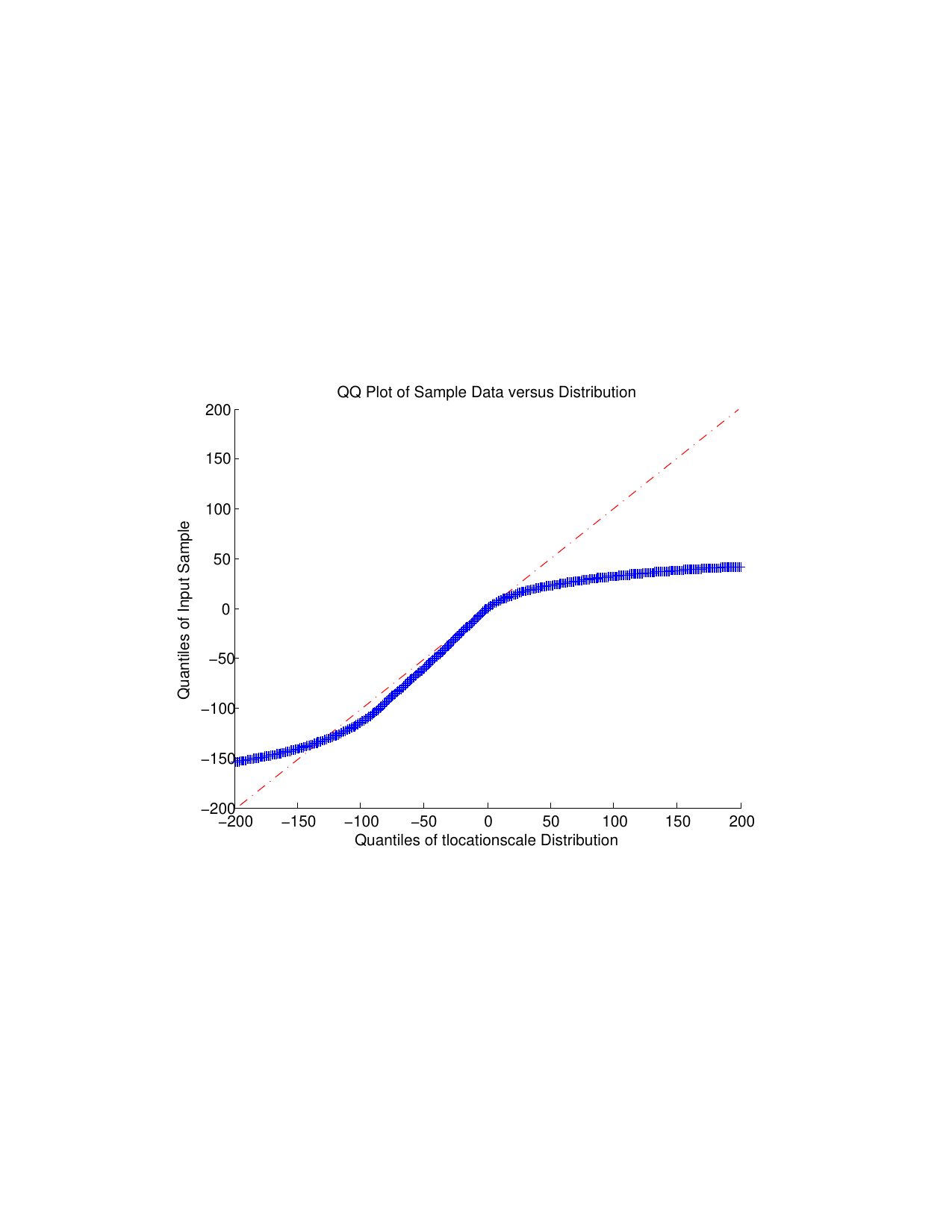}}\label{fig:zz.cauchy.d1}
\hfill
\subfloat[Student(8) distribution]{\includegraphics[scale=1.5, width=4.4cm , trim= 2in 3in 2in 3.5in]{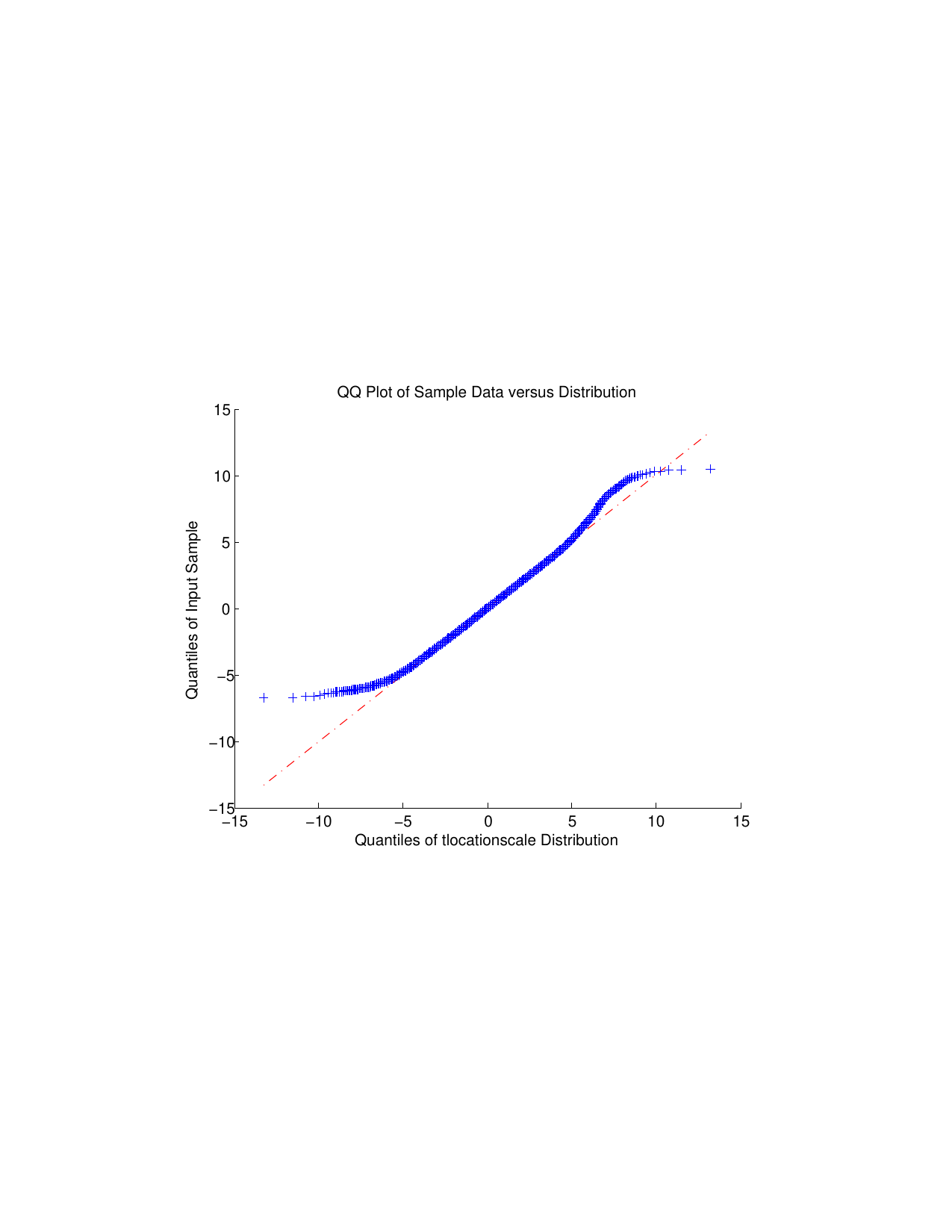}}\label{fig:zz.t8.d1}
\hfill
\subfloat[Student(500) distribution]{\includegraphics[scale=1.5, width=4.4cm , trim= 2in 3in 2in 3.5in]{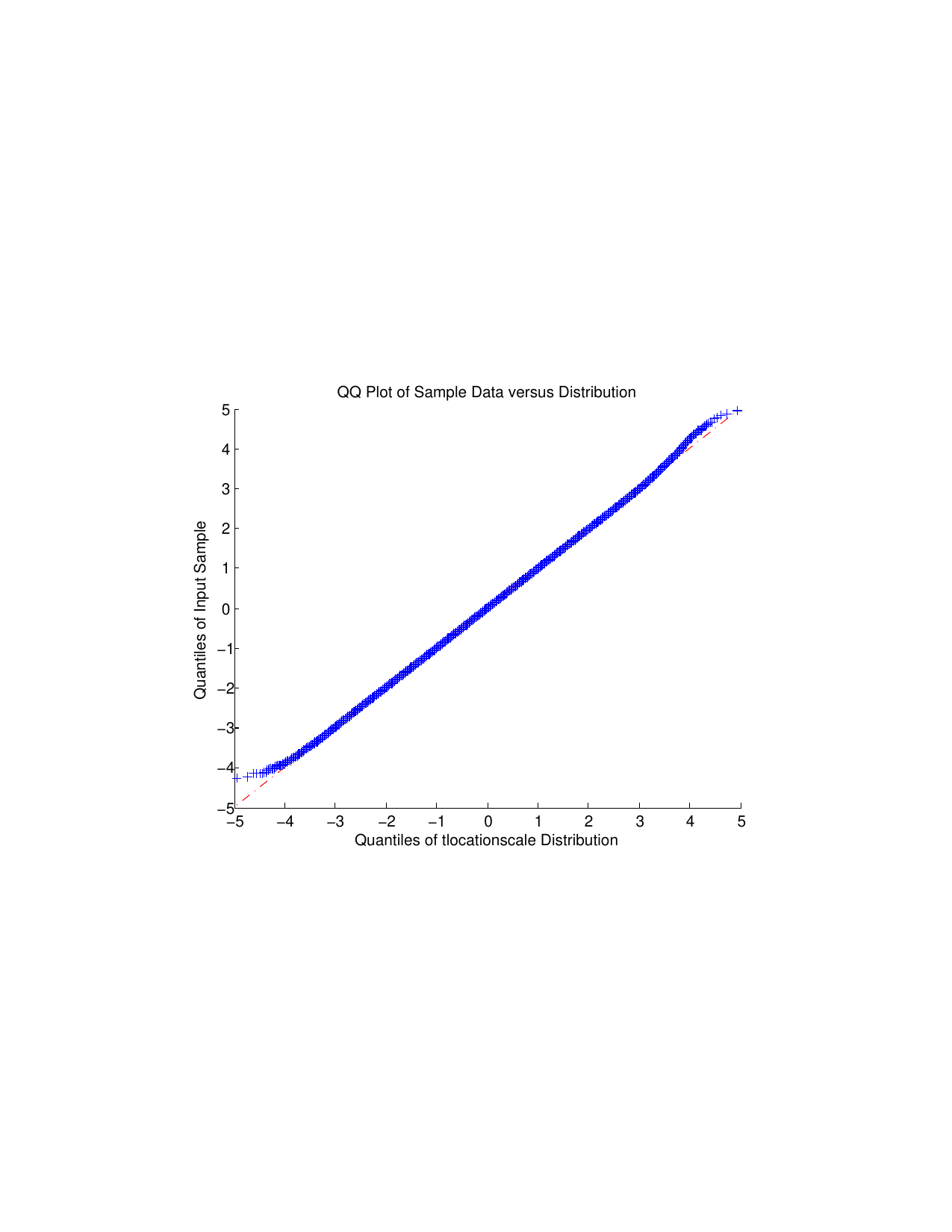}}\label{fig:zz.t500.d1}
\caption{Q-Q plots of canonical Zig-Zag for three Student distributions with increasing degrees of freedom. With Student($\nu$) we denote the Student distribution with $\nu$ degrees of freedom. Each algorithm runs until $N=10^4$ switches of direction occur. The closer the curve is to the diagonal line, the better the algorithm approximates the distribution. Clearly, the approximation is better when the target has higher degrees of freedom, i.e. lighter tails.}\label{fig:zz.qqplots.students.d1}
\end{figure}

Note that the same proof can be used for any other algorithm with constant speed function such as the Bouncy Particle Sampler with refreshing velocities taken from the unit sphere. Since these processes move with constant speed, they will not be able to explore the tails of the distribution sufficiently, which will result in a bad estimation of a target distribution that has heavy tails. Constant velocities are used mainly for the simplicity of the deterministic paths they provide. However, there are other types of deterministic paths that we can simulate exactly which do not move with unit speed. 
This raises the question why not allow the original Zig-Zag process to move with non-constant velocities. We introduce this algorithm in the following section. 

\section{Speed Up Zig-Zag}\label{speed.up.zig.zag:00}
\subsection{Definition of the Algorithm}\label{speed.up.zig.zag.definition:00}
In order to address the problem of slow mixing on heavy tails we introduce a variant of the original Zig-Zag process. Instead of allowing the process to move with unit velocity, we allow it to have a positive speed depending on the current position. Since in high dimensions this might create a system of ODEs that is non-implementable, we only allow the process to move in directions $\{ \pm 1 \}^d$ as the original Zig-Zag does.

The state space will, again, be $E=\mathbb{R}^d \times \{ \pm 1 \}^d$. However, when the process is at point $(x,\theta) \in E$, it will move along the path $\{ x+\theta t , t \geq 0 \}$ with speed function $s$ that depends on the current position. Typically, this speed will increase the further the process is from the mode. After a random time that will depend on a Poisson process, as in the original Zig-Zag, the process will stop, one of the coordinates of $\theta$ will switch sign and the process will start moving again in the new direction. This will create excursions that tend to leave the area of high density and visit the tails quite often. At the same time, when the process is at the tails of the distribution, it can speed up and return to the centre fast enough. We shall call this process {\bf Speed Up Zig-Zag (SUZZ)}, with state space $E \cup \{ \partial \}$ (where $\partial$ is a graveyard state that
is needed for technical reasons), with $C^2$ {\bf speed function} $s: \mathbb{R}^d \rightarrow (0,\infty)$ and {\bf rate functions} $\lambda_i : E \rightarrow [0,+\infty)$ for all $i \in \{ 1,...,d \}$.

The SUZZ process  starting from $(x,\theta) \in E$ evolves as follows. Consider the following ODE system
\begin{equation}\label{suzz.heuristic.ode:01}
\left\{ \begin{array}{l}
\dfrac{dX_t}{dt}=\theta s(X_t), \ X_0=x  \vspace{1.5 mm}\\ 
\dfrac{d\Theta_t}{dt}=0, \ \Theta_0=\theta.
\end{array} \right.
\end{equation}
The procedure to solve (\ref{suzz.heuristic.ode:01}) will be given in Appendix \ref{ode.solution:00}. When the speed function is superlinear, the solution to (\ref{suzz.heuristic.ode:01}) explodes in finite time $t^*(x,\theta)$. Let $(X_t,\Theta_t)$ denote the solution to (\ref{suzz.heuristic.ode:01}) until time $t^*(x,\theta)\in(0,+\infty]$. For each coordinate $i \in \{ 1,...,d \}$, we let $T_1^i$ denote the first event of a non-homogeneous Poisson Process of rate $m_i(t)=\lambda_i(X_t,\theta)$, for $i=1,...,d$. 
Let $T_1=\min_{i \in \{ 1,...,d \}}T_1^i$ and $j=\argmin_{i \in \{ 1,...,d \}} \{ T_1^i \}$. The SUZZ process is defined until time $T_1$ to be the solution $(X_t,\Theta_t)_{t \leq T_1}$ to (\ref{suzz.heuristic.ode:01}). At time $T_1$ the direction/velocity $\Theta_{T_1}$ of the process switches from $\theta$ to $F_j(\theta)$, as in (\ref{multi.zz.flip.ssymbol:1}).
 In the case where $T_1>t^*(x,\theta)$, the process is defined as the solution to (\ref{suzz.heuristic.ode:01}) until $t^*(x,\theta)$ and then it moves to the graveyard state $\partial$. If $T_1<t^*(x,\theta)$, the process starts again from the new starting point $(X_{T_1},F_j[\theta])$ and evolves as before until time $T_2$ when the velocity switches again. Then the process starts again from the new position etc. This inductively defines the process until time 
\begin{equation}\label{definition.of.xi:00}
\xi=\lim_{n \rightarrow + \infty}T_n.
\end{equation}
In the case where $\xi < \infty$, $\xi$ is the first time that the process has had infinitely many switches of direction and the process moves to the graveyard state at time $\xi$.

A different way to describe the process is through its generator. We will later see that the class of functions with compact support and continuous first derivative, $C^1_c$, is contained in the domain of the strong generator of the SUZZ process and for any $f \in C^1_c$ the strong generator is given by
\begin{equation*}
    \mathcal{L}f(x,\theta):=\sum_{i=1}^d  \theta_i s(x) \partial_i f(x,\theta) + \lambda_i(x,\theta)(f(x,F_i(\theta))-f(x,\theta)) .
\end{equation*}

The process is therefore defined as a Piecewise Deterministic Process in \cite{davis:84} would be. The difference is that we allow the deterministic dynamics to have a finite explosion time, which Davis in \cite{davis:84} does not. We therefore need to be more careful in the analysis of the process. Let $O_m$ be the ball of radius $m$ centred around the origin $0$. We define 
\begin{equation}\label{definition.of.zeta.m:00}
\zeta_m=\inf \{ t \geq 0 : X_t \notin O_m \}
\end{equation}
 and let 
 \begin{equation}\label{definition.of.zeta:00}
 \zeta=\lim_{m \rightarrow \infty}\zeta_m.
 \end{equation}
  The two random variables $\xi$ and $\zeta$ quantify two types of explosion that can occur for the process. The first is that the process could have infinitely many switches in finite time and the second, that the process might diverge to infinity in finite time. 
  
  An algorithmic description of the process is given in Supplementary A and a more formal construction of the process is given in Appendix \ref{formal.cosntruction:00}.

\subsection{Stability and Convergence Properties}\label{stability.convergence:00}
In this section we will study the process in more detail and provide some convergence results to the distribution of interest. We will ultimately provide assumptions that ensure exponential ergodicity of the process.

Similarly to the original Zig-Zag case, we will assume that we are trying to target the measure $\mu$ introduced in (\ref{zz.inv:3}) using a SUZZ with speed function $s$. Throughout this article we will assume that the rates are of the form

\begin{equation}\label{rates.formula:2}
\lambda_i(x,\theta)=[\theta_iA_i(x)]^++\gamma_i(x,\theta_{-i}),
\end{equation}
where $\gamma_i$ is a non-negative, locally bounded, integrable function that does not depend on the $i$th component of $\theta$ and
\begin{equation}\label{rates.formula:3}
A_i(x)=s(x)\partial_iU(x)-\partial_is(x).
\end{equation}
 
Note that if we use the constant speed function $s \equiv 1$, we retrieve the original Zig-Zag rates when targeting $\mu$. We will later prove that if the rates satisfy  (\ref{rates.formula:2}) and some extra regularity conditions hold, the SUZZ process leaves the measure $\mu$ in 
(\ref{zz.inv:3}) invariant. 

Before we focus more on whether the SUZZ process targets the right distribution, we first need to consider some explosivity issues the process might have. 
Even in one dimension, picking a large enough speed function $s$ can lead to deterministic dynamics that explode in finite time. 
On a first glance, following deterministic dynamics that explode in finite time seems to be non-implementable, therefore non-desirable. However, as will be proven in Theorem \ref{non.explo:1}, frequent direction changes will almost surely rule out trajectories actually reaching $\infty$. Moreover, since the deterministic dynamics can reach infinity in finite time, when one reverses the time, the dynamics "come down from infinity" in finite time. This means that the time it takes to return to areas of high density may be independent of where the process starts from. 
This paves the way for the SUZZ algorithm to be uniformly ergodic.

One can allow the deterministic dynamics to be explosive, as long as a switching Poisson process is also introduced, having a very large intensity that will switch the direction of the process before it reaches the explosion time. We will provide conditions, the rates should satisfy, for the process to be a.s.\ non-explosive, even if the deterministic dynamics themselves are explosive.

Before that, we need to properly define how can the process explode. 

\begin{definition}
Let $\zeta$ be as in (\ref{definition.of.zeta:00}). 
 The process is called non-explosive if $\zeta=+\infty$ a.s.\
\end{definition}
We begin with the most essential assumption for the speed function.

\begin{assumption}[Speed Growth]\label{non.explo.condition:0}
$\lim_{\| x \| \rightarrow \infty}\exp \{ -U(x) \}s(x)=0$.
\end{assumption}


\begin{remark}\label{explain.non.explo.condition:0}
We are imposing Assumption \ref{non.explo.condition:0} in order to ensure that the process will have switched the deterministic dynamics before they reach the explosion time. To see an example of how things could go wrong, consider a one-dimensional SUZZ with speed function $s$ targeting a distribution that has $U$ as minus log-likelihood. Assume that there exists a $x_0$ such that for all $x \geq x_0$, $(U(x)-\log s(x))'>0$, as would typically be the case.

Suppose that the process starts from
$(x,+1)$, $x \geq x_0$. The process evolves under the deterministic dynamics given in (\ref{suzz.heuristic.ode:01}) until the explosion time $t^*=t^*(x,+1)$. Consider the Poisson process with rate $\{ m(t)=\lambda(X_t,\theta), t \geq 0 \}$. 
Then,
\begin{align*}
&\int_0^{t^*}m(t)dt=\int_0^{t^*}\lambda(X_t,+1)dt
=\int_x^{+\infty}\lambda(u,+1)\dfrac{1}{s(u)}du  = \int_x^{+\infty} U'(u)-\dfrac{s'(u)}{s(u)}du\\
&=\lim_{u \rightarrow +\infty}U(u)-\log s(u) - \left( U(x) -\log s(x) \right).
\end{align*}

 Therefore, assuming that Assumption \ref{non.explo.condition:0} does not hold and let's say that $s(u)\exp \{ -U(u) \} \\
 \xrightarrow{u \rightarrow +\infty}a>0$ we get that 
 \begin{equation*}
\lim_{u \rightarrow +\infty} U(u)-\log(s(u))=\log(a^{-1})<\infty,
 \end{equation*} 
 and therefore
 \begin{equation*}
 \mathbb{P}_{x,+1}(\text{no switches until time } t^*(x,+1))=\exp \left\{ -\int_0^{t^*(x,+1)}m(t)dt \right\}>0.
 \end{equation*}
 Therefore, if $t^*(x,+1)<\infty$ then the process has a positive probability to explode. 
 The same situation is experienced in higher dimensions assuming that for all coordinates $i$, $\partial_i (U(x)-\log s(x))>0$ for all $x=(x_1,...,x_d)$ for which $x_i$ is positive and very large. Furthermore, as we will see in Proposition \ref{switch.before.boom:1}, assuming that Assumption \ref{non.explo.condition:0} holds, forces the SUZZ process to a.s.\ switch direction before it reaches the explosion time. This forces us to adopt Assumption \ref{non.explo.condition:0}.
 \end{remark}

We will, also, make the following assumption. 
 
\begin{assumption}\label{ass.lyapunov:1}
Assume that for the refresh rates there exists $\bar{\gamma}$ such that for all $i \in \{ 1,...,d \}, (x,\theta) \in E$ , $\gamma_i(x,\theta_{-i})=\gamma_i(x) \leq \bar{\gamma}$.\\
Furthermore, assume that there exists $R>0$ and $A>0$ so that for all $x \notin B(0,R) $
\begin{equation}\label{non.decay.of.rates:1}
\sum_{i=1}^d|A_i(x)|>A> \max \{ 3d \bar{\gamma} , 4d(d-1)\bar{\gamma} \}.
\end{equation}
\end{assumption}

\begin{remark}
 When all refresh rates are zero, then $\bar{\gamma}=0$ and (\ref{non.decay.of.rates:1}) means that the overall switching rate is bounded away from zero, which seems essential in order to gain exponential ergodicity. More generally, the $A_i$'s describe the intention of the algorithm to switch from a direction leading to lower density areas, while the $\gamma_i$'s describe the intention of the algorithm to switch direction randomly. Large values of $\gamma_i$ would lead the algorithm to a random walk behaviour (see also \cite{bierkens.duncan:17}) and might decrease the convergence rate. Therefore, (\ref{non.decay.of.rates:1}) could be seen as a quantitative upper bound for the refresh rate.
 \end{remark}
 
 We also have to assume the following.
 
 \begin{assumption}\label{the.one.that.always.holds:1}
 If we iteratively define the functions 
 $h_n: [0,+\infty) \rightarrow [0,+\infty)$ such that
\begin{equation}
    h_0(x)=x
\end{equation}
and for $n \geq 1$
\begin{equation}\label{iterative.logarithms:1}
    h_n(x)=\log(1+h_{n-1}(x)).
\end{equation}

then there exists an $n \in \mathbb{N}$ such that
\begin{equation}\label{the.arbitrary.second.der.A.assumption:2}
 \lim_{\| x \| \rightarrow \infty}\frac{h_n\left( s(x)\| \nabla \left( U(x)-\log s(x) \right)  \|_1 \right)}{ U(x)-\log s(x) }=0.
 \end{equation}
 \end{assumption}
 
 \begin{remark}
 Recall that, due to Assumption \ref{non.explo.condition:0}, we have that $U(x)-\log s(x) \xrightarrow{|x|\rightarrow \infty}+\infty$. Therefore, in practice Assumption \ref{the.one.that.always.holds:1} will almost always hold.
 \end{remark}
 
Finally, we make one more assumption.
 \begin{assumption}\label{ass.lyapunov:2}
For all $j \in \{ 1,...,d \}$, $A_j \in C^1$ and for any $\delta >0$,
\begin{equation}\label{the.arbitrary.second.der.A.assumption:1}
\lim_{\| x \| \rightarrow \infty} \sum_{i=1}^d\sum_{j=1}^d\dfrac{s(x) }{\sum_{k=1}^d |A_k(x)|} \  \dfrac{ |\partial_i A_j(x) | }{(1+|A_j(x)|)(1+\log(1+\delta|A_j(x)|))}=0.
 \end{equation}
 \end{assumption}

 \begin{remark}\label{weird.assumption.remark:321}
 
 This is a technical assumption used to prove the results of this section and it can be quite difficult to verify in practice for multi-dimensional targets. We believe, however, that it is not necessary for the results to hold. For example, in Section \ref{specific.stability.convergence:00} the desired properties for the SUZZ process are directly proved for a family of targets and with speed functions that do not satisfy Assumption \ref{ass.lyapunov:2}. 
 
We note, however, that Assumption \ref{ass.lyapunov:2} generalises one made in \cite{bierkens.roberts.zitt:2019} to prove exponential ergodicity of the original Zig-Zag. Indeed, when $s(x)=1$, Assumption 3.4 writes 
\begin{equation*}
    \lim_{\| x \| \rightarrow \infty} \sum_{i=1}^d\sum_{j=1}^d\dfrac{1 }{\|  \nabla U(x) \|_1} \  \dfrac{ |\partial_i \partial_jU(x) | }{(1+|\partial_jU(x)|)(1+\log(1+\delta |\partial_jU(x)|))}=0.
\end{equation*}
for all $\delta>0$. This is weaker than 
\begin{equation*}
    \lim_{\| x \| \rightarrow \infty} \frac{\| Hess(U)(x) \|}{\|  \nabla U(x) \|_1}=0,
\end{equation*}
assumed in \cite{bierkens.roberts.zitt:2019}. 
The reader can see Example 5.2.9 of \cite{vasdekis:20} for one-dimensional examples where the target has tails asymptotically similar to those of a Student distribution and it is verified that (\ref{the.arbitrary.second.der.A.assumption:1}) holds.

\end{remark}

Our first main result is the following.
\begin{theorem}[Non-Explosion]\label{non.explo:1}
Assume that $s \in C^2$ is strictly positive, the rates satisfy (\ref{rates.formula:2}) and Assumptions \ref{non.explo.condition:0}, \ref{ass.lyapunov:1}, \ref{the.one.that.always.holds:1} and \ref{ass.lyapunov:2} hold.
Then the process is non-explosive, meaning that if $\zeta$ as in (\ref{definition.of.zeta:00}), then $\zeta=+\infty$ a.s. Furthermore, if $\xi$ as in (\ref{definition.of.xi:00}), then $\xi = +\infty$ a.s.
\end{theorem}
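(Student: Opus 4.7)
The plan is a Foster--Lyapunov argument adapted to a PDMP whose deterministic flow can itself explode. I construct $V\colon E \to [1,+\infty)$ with $V(x,\theta) \to +\infty$ as $\|x\| \to \infty$ uniformly in $\theta$, and verify $\mathcal{L}V \le cV + d$ for the extended generator
\begin{equation*}
\mathcal{L}V(x,\theta) = s(x)\,\theta \cdot \nabla_x V(x,\theta) + \sum_{i=1}^d \lambda_i(x,\theta)\bigl[V(x, F_i\theta) - V(x,\theta)\bigr].
\end{equation*}
Dynkin's formula applied at the localizing time $t \wedge \zeta_m \wedge T_n$, together with Gr\"onwall's inequality, then yields $\estart{V(Z_{t \wedge \zeta_m \wedge T_n})}{x,\theta} \le (V(x,\theta) + dt)\,e^{ct}$. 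On the event $\{\zeta_m \le t\}$, the position at $\zeta_m$ lies on $\partial O_m$, so $V(Z_{\zeta_m}) \ge \inf_{\|y\| = m,\,\eta}V(y,\eta) \to +\infty$ as $m \to \infty$ by construction of $V$; Markov's inequality forces $\prstart{\zeta_m \le t}{x,\theta} \to 0$, hence $\zeta = +\infty$ a.s. Given $\zeta = +\infty$, for any $T > 0$ the trajectory on $[0,T]$ lies in some random $O_m$; on $O_m$ the continuous rates $\lambda_i$ are bounded, and the number of switches in $[0,T]$ is dominated by a finite-rate Poisson variable, yielding $\xi = +\infty$ a.s.

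Setting $W(x) := U(x) - \log s(x)$, formula (\ref{rates.formula:3}) reads $A_i(x) = s(x)\partial_i W(x)$, whence $\sum_i|A_i(x)| = s(x)\|\nabla W(x)\|_1$; Assumption \ref{non.explo.condition:0} says $W(x) \to +\infty$ as $\|x\| \to \infty$. A purely position-dependent choice $V = G(W)$ is essentially doomed: $|\mathcal{L}V| \le s\,G'(W)\|\nabla W\|_1$, and demanding $|\mathcal{L}V| \le cG(W)$ together with hypothesis (\ref{the.arbitrary.second.der.A.assumption:2}) forces $(\log G)'(W) \le c/f_n(W)$, whose integral is convergent for $n \ge 1$ (since $f_n$ is at least exponential), so $G$ would have to be bounded. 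One is therefore forced to exploit the switching jumps, via a $\theta$-dependent correction.

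I take $V(x,\theta) = G(W(x))\bigl(1 + \rho(x,\theta)\bigr)$, where $G$ grows at the iterated-exponential scale $f_n$ of Assumption \ref{ass.lyapunov:2} (shifted so that $V \ge 1$) and $\rho(x,\theta) = \eta \sum_i \theta_i h_i(x)$ is a bounded smooth correction, strictly larger on the ``uphill'' configurations $\{\theta_i A_i(x) > 0\}$; concretely $h_i$ is a bounded smooth proxy for $\sgn A_i$ whose partial derivatives have size controlled by $|\partial_j A_i|/\bigl[(1 + |A_i|)(1 + \log(1 + |A_i|))\bigr]$, matching the denominators in hypothesis (\ref{the.arbitrary.second.der.A.assumption:1}). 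Then a switch in an uphill coordinate $i$, which happens at rate at least $|A_i(x)|$, decreases $\rho$ by a definite amount and contributes roughly $-2\eta\,|A_i(x)|\,h_i(x)\,G(W(x))$ to the jump term of $\mathcal{L}V$. Summing over $i$ and invoking the lower bound $\sum_i|A_i(x)| > A > 3d\bar\gamma$ from Assumption \ref{ass.lyapunov:1} yields a strictly negative multiple of $G(W(x))$ outside a large compact set, while the refresh rates (bounded by $\bar\gamma$) contribute only a bounded multiple of $G$. The continuous-transport term splits into $sG'(W)(1+\rho)\,\theta \cdot \nabla W$, controlled by hypothesis (\ref{the.arbitrary.second.der.A.assumption:2}) on the scale $G = f_n$, and $sG(W)\,\theta \cdot \nabla_x\rho$, controlled by hypothesis (\ref{the.arbitrary.second.der.A.assumption:1}); both are $o(G(W))$ and are absorbed into the negative jump term by taking $\eta$ small and the compact set large. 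This delivers $\mathcal{L}V \le cV + d$ globally.

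The main obstacle is the calibration of $G$ and $\rho$ so that the two parts of Assumption \ref{ass.lyapunov:2} combine to swallow the continuous-transport terms on the iterated-exponential scale $f_n$. The peculiar denominators $(1+|A_j|)(1+\log(1+|A_j|))$ in (\ref{the.arbitrary.second.der.A.assumption:1}) appear to be engineered precisely so that $h_i$ above is a legitimate bounded smooth function of $x$ with integrable derivative contribution; the freedom to choose any $n$ in (\ref{the.arbitrary.second.der.A.assumption:2}) is what lets one tune $G = f_n$ to the right scale. Getting the signs and lower-order error terms right across the $2^d$ directional configurations, and handling the region where $W$ is small so that $V \ge 1$ is preserved, are the remaining bookkeeping burdens.
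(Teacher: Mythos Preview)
Your high-level strategy---a Foster--Lyapunov argument with a $\theta$-dependent correction that exploits the negative jump contributions, followed by Dynkin/Gr\"onwall/Markov---is correct and matches the paper's. However, the choice $G = f_n$ is a genuine error that breaks the estimate. The drift term contributes $\tfrac{G'(W)}{G(W)}\sum_i\theta_iA_i$ to $\mathcal{L}V/V$, while the jump term contributes at best $-c\,\eta\sum_i|A_i|$; both carry the same factor $\sum_i|A_i|$, so absorption of the drift into the jump requires $G'/G$ to be eventually small. But $f_1'/f_1 \to 1$ and $f_n'/f_n \to \infty$ for $n \ge 2$, so with $G = f_n$ the drift is never dominated. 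Your claim that $sG'(W)\,\theta\cdot\nabla W$ is $o(G(W))$ via (\ref{the.arbitrary.second.der.A.assumption:2}) does not follow: that hypothesis gives $s\|\nabla W\|_1 = o(f_n(W))$, hence $G'(W)\,s\|\nabla W\|_1 = f_n'(W)\cdot o(f_n(W))$, and $f_n'(W)$ is unbounded for $n\ge 1$.

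The paper instead takes $V(x,\theta)=\exp\bigl\{aW(x)+\sum_i\phi(\theta_iA_i(x))\bigr\}$ with small $a\in(0,1)$ and $\phi(s)=\tfrac12\sgn(s)\,h_{n+1}(\delta|s|)$, where $h_{n+1}=f_{n+1}^{-1}$ is an iterated \emph{logarithm}. Thus the base is $e^{aW}$ (constant log-derivative $a$), and the correction sits in the exponent and is unbounded but extremely slowly growing---not a bounded multiplicative $(1+\rho)$. The balance is then between $a|A_i|$ from the drift and $|A_i|\bigl(\exp\{-2\phi(\theta_iA_i)\}-1\bigr)\approx -|A_i|$ from the jump in the uphill case, yielding the strong drift $\mathcal{L}V\le -cV+b\mathbf{1}_C$. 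The role of (\ref{the.arbitrary.second.der.A.assumption:2}) is \emph{not} to control the transport term, but to verify afterwards that $V\to\infty$ despite the possibly negative correction $\sum_i\phi(\theta_iA_i)$: since $|\phi(\theta_iA_i)|\lesssim h_{n+1}(|A_i|)$ and $\sum_i|A_i|=s\|\nabla W\|_1=o(f_n(W))$, the correction is $o(W)$ and $e^{aW}$ wins. Hypothesis (\ref{the.arbitrary.second.der.A.assumption:1}) enters where you expect, bounding the $\phi'$-terms. A minor point on ordering: the paper first proves $\zeta_m<\xi$ a.s.\ (bounded rates on $O_m$), then applies Meyn--Tweedie to get $\zeta=+\infty$, whence $\xi=+\infty$; your reversed order can be made rigorous with the localization $t\wedge\zeta_m\wedge T_n$ you already propose.
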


Furthermore, if we pick the switching rates according to (\ref{rates.formula:2}), then our non-explosive process leaves the target distribution of interest invariant. For this we need to make the following assumption in the case where the deterministic dynamics of the process are explosive.

\begin{assumption}\label{non.evanescence.assumption:1}
\begin{equation}\label{non.evanescence.assumption:2}
\lim_{\| x \| \rightarrow \infty}\| x \|^{d-1}s(x)\exp \{ -U(x) \}=0.
\end{equation}
\end{assumption}

\begin{remark} This is a stronger assumption than Assumption \ref{non.explo.condition:0}, since it imposes a more strict upper bound on the growth of the speed functions we can use. However, it still allows a lot of flexibility on the growth of $s$. Consider for example a $d$-dimensional Student distribution with $\nu$ degrees of freedom, i.e. $\pi(x)=\exp\{ -U(x) \} \sim 1/|x|^{d+\nu}$,  where we write $a(x) \sim b(x)$ to denote that $\lim_{|x| \rightarrow \infty}\frac{a(x)}{b(x)}=c$ for a constant $c>0$.  Assumption \ref{non.evanescence.assumption:1} implies that $s(x)/|x|^{1+\nu} \xrightarrow{|x| \rightarrow \infty}0$. Therefore, if $s(x) \sim |x|^{1+k}$, we have to impose the condition that $k < \nu$.

As will be seen in the proof of Theorem \ref{inv.measure.speed.up:1}, this assumption is only needed in the case of explosive deterministic dynamics. 
\end{remark}

We then have the following. 
\begin{theorem}[Invariant Measure]\label{inv.measure.speed.up:1}
Assume that the rates satisfy (\ref{rates.formula:2}) and Assumptions \ref{ass.lyapunov:1} , \ref{the.one.that.always.holds:1}, \ref{ass.lyapunov:2} and \ref{non.evanescence.assumption:1} hold. Assume that $s \in C^2$ is strictly positive. 
Then, the SUZZ process has the measure $\mu$ in (\ref{zz.inv:3}) as invariant.
\end{theorem}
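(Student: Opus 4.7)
The plan is to verify the invariance identity $\int_E \mathcal{L}f\,d\mu = 0$ for every test function $f \in C_c^1(E)$, and then transfer this to invariance of $\mu$ under the semigroup by a Dynkin-type argument. For such $f$, the extended generator of the SUZZ process acts as
\[ \mathcal{L}f(x,\theta) = s(x)\,\theta\cdot\nabla_x f(x,\theta) + \sum_{i=1}^d \lambda_i(x,\theta)\bigl(f(x,F_i[\theta]) - f(x,\theta)\bigr), \]
which is the formal generator determined by Algorithm~\ref{algorithm.suzz:1}.

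The algebraic core is an integration by parts on the transport part. For each $i$, summing over $\theta$ and integrating in $x$, the compact support of $f$ kills all boundary terms and yields
\[ \int_E s(x)\,\theta_i\,\partial_i f\,e^{-U(x)}\,dx\,d\theta = \int_E f\,\theta_i\bigl(s(x)\partial_i U(x) - \partial_i s(x)\bigr)e^{-U(x)}\,dx\,d\theta = \int_E f\,\theta_i A_i(x)\,e^{-U(x)}\,dx\,d\theta, \]
which is exactly what the choice of $A_i$ in~(\ref{rates.formula:3}) is engineered to produce. I would then decompose $\lambda_i = [\theta_i A_i]^+ + \gamma_i$ in the jump term. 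The refresh contributions $\gamma_i$ vanish after summing over $\theta_i \in \{\pm 1\}$ for fixed $\theta_{-i}$, since $\gamma_i$ does not depend on $\theta_i$ and $(f(x,F_i[\theta]) - f(x,\theta))$ telescopes. For the $[\theta_i A_i]^+$ terms, the identity $[a]^+ - [-a]^+ = a$ produces exactly the negative of the transport contribution, so everything cancels and $\int \mathcal{L}f\,d\mu = 0$.

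Given this generator identity and the non-explosiveness from Theorem~\ref{non.explo:1}, which makes $Z_t$ a well-defined $E$-valued strong Markov process with semigroup $(P_t)_{t\geq 0}$, I would conclude invariance via a standard martingale argument: $M_t^f = f(Z_t) - \int_0^t \mathcal{L}f(Z_u)\,du$ is a local martingale under each $\mathbb{P}_{(x,\theta)}$ for $f \in C_c^1(E)$; taking expectations and integrating against $\mu$ using Fubini gives $\int P_t f\,d\mu = \int f\,d\mu$ for every $t \geq 0$. Since $C_c^1(E)$ is measure-determining on $E$, this yields $\mu P_t = \mu$.

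The main obstacle is that the deterministic flow may explode in finite time, so Davis's classical PDMP framework does not apply directly, and one must carefully justify that the compactly supported identity extends to global invariance. Assumption~\ref{non.evanescence.assumption:1} is precisely what handles this: the condition $\|x\|^{d-1} s(x)\,e^{-U(x)} \to 0$ controls the mass flux of $\mu$ across large spheres $\partial B(0,R)$ and, together with Theorem~\ref{non.explo:1}, rules out evanescence of the stationary process. This prevents any ``loss of mass'' to infinity coming from trajectories that would otherwise trace explosive paths between switches, which is exactly the technical subtlety that separates the SUZZ analysis from the standard non-explosive PDMP case.
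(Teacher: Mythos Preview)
Your algebraic computation that $\int_E \mathcal{L}f\,d\mu = 0$ for $f \in C_c^1(E)$ is correct, and indeed this step does not use Assumption~\ref{non.evanescence.assumption:1} at all, since compact support of $f$ kills the boundary terms. The gap is in the passage from this identity to semigroup invariance. Your Dynkin argument gives
\[
\int P_t f\,d\mu - \int f\,d\mu \;=\; \int_0^t \int P_u(\mathcal{L}f)\,d\mu\,du
\quad\text{or equivalently}\quad
\int_0^t \int \mathcal{A}(P_u f)\,d\mu\,du,
\]
and neither integrand is controlled by what you proved. The first would require $\mu$ to already be invariant; the second would require $P_u f \in C_c^1(E)$, but when the deterministic flow is explosive, $P_u f$ need not have compact support even if $f$ does (a particle can start arbitrarily far out and still reach the support of $f$ by time $u$). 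So the class $C_c^1$ is not preserved by the semigroup, and you have not shown it is a core.

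This is exactly where the paper spends its effort and where Assumption~\ref{non.evanescence.assumption:1} actually enters. The paper proves directly that $\int_E \mathcal{A}P^s f\,d\mu = 0$ for each $s>0$: it shows $P^s f$ is differentiable along the SUZZ flow (Lemma~\ref{path.differentiability.of.semigroup:1}), then performs the integration by parts for $g = P^s f$ on balls $O_m$ and lets $m\to\infty$. Because $g$ is bounded but not compactly supported, boundary terms of size $\sup_{\partial O_m} s(x)e^{-U(x)}\cdot m^{d-1}$ survive, and it is precisely the decay $\|x\|^{d-1} s(x)e^{-U(x)}\to 0$ that makes them vanish. Your final paragraph gestures at this role for the assumption but attaches it to the wrong step; it is not needed for your $C_c^1$ computation, and it does not by itself rescue the martingale argument you wrote.
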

Crucially, under some further conditions on the speed function $s$, the SUZZ process is exponentially ergodic even when targeting some heavy tailed distributions.

\begin{theorem}[Exponential Ergodicity of SUZZ]\label{geom.ergo.suzz:1}
Let $(Z_t)_{t \geq 0}=(X_t,\Theta_t)_{t \geq 0}$ be a SUZZ process with strictly positive speed function $s \in C^2$. Suppose that the rates satisfy (\ref{rates.formula:2}) and Assumptions  \ref{ass.lyapunov:1},
\ref{the.one.that.always.holds:1}, \ref{ass.lyapunov:2} and \ref{non.evanescence.assumption:1} hold. Assume further that the function $U-\log s \in C^3$ and has a non-degenerate local minimum, i.e. there exists an $x_0 \in \mathbb{R}^d$ local minimum for $U-\log s$ such that the Hessian matrix $Hess(U-\log s)(x_0)$ is strictly positive definite. Finally, assume that $\mu$ introduced in (\ref{zz.inv:3}) is a probability measure. Then the SUZZ process is exponentially ergodic, i.e. there exists some $M:E \rightarrow [1,+\infty)$ and $\rho<1$ such that for any $(x,\theta) \in E$,
\begin{equation}\label{geom.ergo.suzz:11}
\| \mathbb{P}_{x,\theta}(Z_t \in \cdot)-\mu(\cdot) \|_{TV} \leq M(x,\theta)\rho^t.
\end{equation}
\end{theorem}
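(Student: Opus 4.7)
The plan is to apply the standard Meyn--Tweedie--Down geometric ergodicity theorem for continuous-time Markov processes. Three ingredients must be verified: a Foster--Lyapunov drift condition $\mathcal{L}V \leq -\alpha V + \beta \mathbf{1}_C$ for the Lyapunov function $V$ in~(\ref{V.function:0}) with $C$ a compact subset of $E$ and $\alpha,\beta>0$; the petiteness of the sublevel sets $\{V \leq r\}$ for a skeleton chain $(Z_{n t_0})_{n\in \mathbb{N}}$; and $\psi$-irreducibility together with aperiodicity. Non-explosivity (Theorem~\ref{non.explo:1}) and invariance of $\mu$ (Theorem~\ref{inv.measure.speed.up:1}) are already available, so combining these three ingredients yields~(\ref{geom.ergo.suzz:11}).

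For the drift condition I would expand the extended generator of SUZZ,
\begin{equation*}
\mathcal{L}V(x,\theta) = s(x)\,\theta \cdot \nabla_x V(x,\theta) + \sum_{i=1}^d \lambda_i(x,\theta)\bigl[V(x, F_i[\theta]) - V(x,\theta)\bigr],
\end{equation*}
insert the rates~(\ref{rates.formula:2})--(\ref{rates.formula:3}), and exploit the identity $[\theta_i A_i]^+ - [-\theta_i A_i]^+ = \theta_i A_i$ so that the convective and jump contributions combine into a negative leading term of order $-c\, s(x) \sum_i |A_i(x)| V(x,\theta)$ outside a large ball. Assumption~\ref{ass.lyapunov:1} supplies the lower bound on $\sum_i |A_i|$, Assumption~\ref{non.explo.condition:0} controls the magnitude of $s\exp\{-U\}$, and the iterated-exponential bound in Assumption~\ref{ass.lyapunov:2} dominates the second-order corrections arising from $\nabla A_i$. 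This reproduces, with $U$ replaced throughout by $U - \log s$, the drift analysis performed in~\cite{bierkens.roberts.zitt:2019} for the canonical Zig-Zag.

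For petiteness I would use the non-degenerate local minimum $x_0$ of $U - \log s$ as an accessible attractor. Given any compact set of starting states, a controlled sequence of a bounded number of straight-line segments interrupted by direction flips can be chosen to bring the process into an arbitrarily small neighborhood $\mathcal{N}$ of $x_0$ within a fixed time $t_0$; because all switching rates and the reciprocal of the speed are bounded on compacts, the probability of each prescribed flip pattern is bounded below uniformly in the starting state. Once inside $\mathcal{N}$, I would establish a minorization by a further composition of $d$ flow segments whose velocity vectors form a basis of $\mathbb{R}^d$: the map from the $d$ inter-switch times to the terminal position is a local diffeomorphism with Jacobian proportional to $\prod_k s$, so the conditional law of the terminal state has a continuous positive density on an open neighborhood of $x_0$. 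Combined with the bounded switching intensities near $x_0$, this yields a minorization $P^{t_0}(z,\cdot) \geq \varepsilon\, \nu(\cdot)$ uniform over $z \in \{V \leq r\}$, hence the petiteness. The same controllability argument delivers $\psi$-irreducibility, and aperiodicity follows from the positive probability of no switching on arbitrarily short time intervals.

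The main obstacle is the minorization step. Because the SUZZ particle moves only along the $2^d$ corners of $\{\pm 1\}^d$, the image of the process after $k$ switches is a $k$-dimensional stratum and carries no Lebesgue mass for $k<d$; at least $d$ switches are therefore required to obtain a full-dimensional image, and one must further verify that the resulting density admits a uniform lower bound on a common open target as the starting point ranges over $\{V \leq r\}$. This is where the non-degenerate Hessian of $U - \log s$ at $x_0$ enters decisively: together with the $C^2$ regularity of $s$ and the $C^3$ regularity of $U - \log s$, it keeps the switching rates $\lambda_i$ bounded above and below on a neighborhood of $x_0$ and gives the quantitative control of the Jacobian of the $d$-fold flow composition needed to extract the uniform density bound.
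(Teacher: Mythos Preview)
Your outline is correct and matches the paper's strategy: Meyn--Tweedie via a Foster--Lyapunov drift for the function $V$ in~(\ref{V.function:0}) together with petiteness of compacts for a skeleton chain. A few points of comparison and caution are worth noting.

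\textbf{Reachability and petiteness.} The paper does not argue minorization directly as you sketch. Instead it observes (Lemma~\ref{comparison.original.reachability}) that, since the SUZZ rates satisfy $\lambda_i(x,\theta)=s(x)\,[\theta_i\partial_i(U-\log s)(x)]^+$, a control sequence is admissible for SUZZ targeting $U$ if and only if it is admissible for the \emph{ordinary} Zig-Zag targeting $U-\log s$. Reachability $(x,\theta)\looparrowright(y,\eta)$ is therefore inherited wholesale from Theorem~4 of~\cite{bierkens.roberts.zitt:2019}, and the hypothesis on the non-degenerate local minimum of $U-\log s$ is used there, not in a local Jacobian computation. Your direct route (build $d$ linearly independent segments and invoke the inverse function theorem) is what~\cite{bierkens.roberts.zitt:2019} does, and it would work, but the reduction is cleaner. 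Either way, the continuous-component step (Lemma~\ref{continuous.component.lemma:1}) then upgrades reachability to a uniform minorization; note that because the deterministic flow may explode, the Poisson-thinning construction must be localized to paths remaining in a fixed ball, a technicality you do not mention.

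\textbf{Drift condition.} Your description of the leading term as $-c\,s(x)\sum_i|A_i(x)|\,V$ has a spurious $s(x)$: the speed is already absorbed into $A_i=s\,\partial_iU-\partial_is$, and the paper obtains $\mathcal{L}V/V\le -c\sum_i|A_i(x)|$ outside a compact. More substantively, the analysis is not simply ``replace $U$ by $U-\log s$ in~\cite{bierkens.roberts.zitt:2019}'': the Lyapunov function~(\ref{V.function:0}) carries the extra factor $\exp\{\sum_i\phi(\theta_iA_i(x))\}$ with $\phi$ built from iterated logarithms, and the proof (Lemma~\ref{lyapunov:1}) is a careful case split on the sign of $\theta_iA_i$ together with the growth conditions~(\ref{the.arbitrary.second.der.A.assumption:1})--(\ref{the.arbitrary.second.der.A.assumption:2}). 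Finally, because the dynamics may be explosive, the drift is verified for the generators $\mathcal{L}^m$ of the stopped processes $Z^m$ (Proposition~\ref{generator.m.process}), and Theorem~\ref{meyn.tweedie:3.6.1} is applied in that form; this localization step should be made explicit.
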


An immediate result due to Theorem 2 of \cite{chan.geyer:94} is the following CLT.
\begin{theorem}[Central Limit Theorem]\label{skeleton.clt:0}
 Suppose that all the assumptions of Theorem \ref{geom.ergo.suzz:1} hold. Let $\{Y_n , n \geq 0 \}$ be any skeleton of the SUZZ process (i.e. for some $\delta >0$, $Y_n=Z_{n\delta}$ for all $n \in \mathbb{N}$) and let $f:E \rightarrow \mathbb{R}$ such that there exists an $\epsilon>0$ with $\mathbb{E}_{\mu}[f^{2+\epsilon}]<\infty$. Then there exists a $\gamma_f^2 \in [0,\infty)$ such that
\begin{equation}\label{skeleton.clt:1}
\frac{\sum_{k=1}^n\left( f(Y_k)-\mu(f) \right) }{\sqrt{n}}\xrightarrow[D]{n \rightarrow \infty}Z 
\end{equation}
for some $Z \sim \mathcal{N}(0,\gamma_f^2)$.
\end{theorem}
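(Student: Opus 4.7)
The plan is to reduce the statement to a direct invocation of Theorem 2 of \cite{chan.geyer:94}, which provides a CLT for $(2+\epsilon)$-integrable functionals of geometrically ergodic Markov chains. So the work is entirely in showing that, for any fixed $\delta > 0$, the skeleton chain $\{Y_n\}_{n \geq 0}$ inherits geometric ergodicity from the continuous-time bound (\ref{geom.ergo.suzz:11}).

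First I would observe that by the Markov property of SUZZ (which is well-defined thanks to the non-explosion result of Theorem \ref{non.explo:1}), the skeleton $Y_n = Z_{n\delta}$ is a time-homogeneous Markov chain on $E$ whose transition kernel is the $\delta$-step semigroup $P^\delta$ of the SUZZ process. Theorem \ref{inv.measure.speed.up:1} then gives that $\mu$ is invariant for $P^\delta$, so $\mu$ is the stationary distribution of the skeleton.

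Next I would transfer exponential ergodicity from continuous to discrete time. Evaluating the bound in Theorem \ref{geom.ergo.suzz:1} at $t = n\delta$ gives
\begin{equation*}
\| \mathbb{P}_{x,\theta}(Y_n \in \cdot)-\mu(\cdot) \|_{TV} \leq M V(x,\theta)\tilde{\rho}^{\,n}, \qquad \tilde{\rho} := \rho^\delta < 1,
\end{equation*}
which is geometric ergodicity of the skeleton chain with Lyapunov function $V$ (in fact the $V$-norm version follows in the standard way, since the drift inequality used in the proof of Theorem \ref{geom.ergo.suzz:1} to produce $V$ transfers verbatim to the discrete generator $P^\delta - I$).

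Finally I would apply Theorem 2 of \cite{chan.geyer:94} to the chain $\{Y_n\}$ with the function $f$: geometric ergodicity together with $\mathbb{E}_\mu[|f|^{2+\epsilon}] < \infty$ yields the existence of $\gamma_f^2 \in [0,\infty)$ for which (\ref{skeleton.clt:1}) holds. The only subtle point is that Chan and Geyer phrase their CLT for a stationary start, whereas here we may start at arbitrary $(x,\theta) \in E$; this is absorbed by the usual coupling/comparison argument, using the $V$-geometric ergodicity just established, so no real obstacle arises. The main conceptual work has already been carried out in Theorem \ref{geom.ergo.suzz:1}, which is why the authors describe this CLT as an immediate consequence.
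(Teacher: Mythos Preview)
Your proposal is correct and matches the paper's approach: the paper presents this theorem as an immediate consequence of Theorem~2 of \cite{chan.geyer:94} applied to the skeleton chain, relying on the exponential ergodicity established in Theorem~\ref{geom.ergo.suzz:1}, and gives no further argument. Your extra details (transferring the geometric bound to the skeleton and handling the non-stationary start) are the natural way to fill in what the paper leaves implicit.
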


Finally, in the case where the target has lighter tails (such that the gradient of the log-likelihood does not decay to zero) we can prove the convergence results for SUZZ under conditions that can be easily verified.

\begin{assumption}\label{light.tail.assumption:1}
Assume that $U-\log s \in C^2$ and there exists an $\tilde{M}>0$ such that the refresh rate $\gamma(x)$ of the SUZZ process satisfies $\gamma(x) \leq \tilde{M} s(x)$ for all $x \in \mathbb{R}^d$. Assume further that for some $n \in \mathbb{N}$, if $h_n$ as in (\ref{iterative.logarithms:1}),
\begin{equation*}
    \lim_{\| x \| \rightarrow \infty }\frac{h_n \left(\| \nabla (U(x) - \log s(x)) \| \right)}{ U(x)-\log s(x) }=0 , \ \lim_{\| x \|  \rightarrow \infty}\frac{\| Hess(U(x)-\log s(x)) \|}{\| \nabla \left( (U(x)-\log s(x) \right)  \|}=0,
\end{equation*}
and that there exists $R>0$ and $A>0$ so that for all $x \notin B(0,R) $
\begin{equation}\label{non.decay.of.rates.light.tails:1}
\|\nabla (U(x)-\log s(x) ) \|_1>A>\max \{ 3d \tilde{M} , 4d(d-1)\tilde{M} \}.
\end{equation}
\end{assumption}

\begin{theorem}\label{geom.ergo.light.tails:1}
Let $(Z_t)_{t \geq 0}=(X_t,\Theta_t)_{t \geq 0}$ be a SUZZ process with speed function $s \in C^2$ bounded away from $0$. 
\begin{itemize}
    \item Assume that the rates satisfy (\ref{rates.formula:2}) and Assumptions \ref{non.explo.condition:0} and \ref{light.tail.assumption:1} hold. Then the SUZZ process is non-explosive.
\item Assume further that either Assumption \ref{non.evanescence.assumption:1} holds or the deterministic dynamics are non-explosive.
Then, the SUZZ process has the measure $\mu$ in (\ref{zz.inv:3}) as invariant.
\item Assume further that the function $U-\log s \in C^3$ and has a non-degenerate local minimum, in the sense of Theorem \ref{geom.ergo.suzz:1}. Finally, assume that $\mu$ introduced in (\ref{zz.inv:3}) is a probability measure. Then the SUZZ process is exponentially ergodic.
\item Assuming the assumptions of the previous bullet, let $\{Y_n , n \geq 0 \}$ be any skeleton of the SUZZ process (i.e. for some $\delta >0$, $Y_n=Z_{n\delta}$ for all $n \in \mathbb{N}$) and let $f:E \rightarrow \mathbb{R}$ such that there exists an $\epsilon>0$ with $\mathbb{E}_{\mu}[f^{2+\epsilon}]<\infty$. Then, the CLT result of (\ref{skeleton.clt:1}) holds.
\end{itemize}
\end{theorem}

The conditions of Theorem \ref{geom.ergo.light.tails:1} can be seen as direct generalisations of assumptions made in \cite{bierkens.roberts.zitt:2019} for the original Zig-Zag. Therefore, Theorem \ref{geom.ergo.light.tails:1} guarantees that for reasonable speed functions, the convergence properties of the original Zig-Zag
carry over in SUZZ. This allows one to see the speed function as a tuning parameter for the original Zig-Zag, which could potentially increase the efficiency of the algorithm even in cases where the original Zig-Zag works well.

\subsection{Stability and convergence for practical choices of speed functions}\label{specific.stability.convergence:00}

Assumption \ref{ass.lyapunov:2} used in Theorems \ref{non.explo:1}, \ref{inv.measure.speed.up:1}, \ref{geom.ergo.suzz:1} and \ref{skeleton.clt:0} can be difficult or impossible to verify for some practical choices of speed functions. For this reason, in this section we will focus our attention on these particular, practical speed functions and we will establish convergence properties for a class of targets, some of which we will also use in simulations in section 5.

We will consider two speed functions, namely 
\begin{equation}\label{speed.d20:1}
s(x)=\left(1+ \| x  \|_2^2 \right)^{\frac{1+k}{2}}
\end{equation}
for $k=0$ and $k=1$. We will refer to the SUZZ algorithms induced by these two functions as SUZZ($0$) and SUZZ($1$) respectively. Note that SUZZ($0$) has non-explosive deterministic dynamics, while SUZZ($1$) has explosive ones, since the speed function grows super-linearly.

We have the following.

\begin{proposition}\label{special.case.theorem:1}
Assume that the target is of the form 
\begin{equation}\label{pi.subexponential:1}
 \pi(x)=\frac{1}{H}\exp\left\{ -\left( 1+ \| x \|_2^2 \right)^{a/2} \right\}
\end{equation}
for some $a \in (0,1)$ or of the form
\begin{equation}\label{pi.student:1}
    \pi(x)=\frac{1}{H}\left(  1+ \frac{1}{\nu} \| x \|_2^2\right)^{-\frac{\nu+d}{2}}
\end{equation}
for some $\nu$ satisfying
\begin{equation}\label{nu.lower.bound:1}
    \nu >
    \frac{27}{2}d^3+2-d.
\end{equation}
Assume also that $s$ is as in (\ref{speed.d20:1}) for $k=0$ or $k=1$. Then the SUZZ process with refresh rate $\gamma \equiv 0$ is non explosive, has $\mu$ as invariant, is geometrically ergodic and satisfies the CLT as in Theorem \ref{skeleton.clt:0}.
\end{proposition}

\begin{remark}\label{specific.remark:1}
Following the proof of Proposition \ref{special.case.theorem:1} in Appendix \ref{proof.of.proposition.special.case:00}, we can more generally have the conclusion of Proposition \ref{special.case.theorem:1} when the speed function is such that there exists a $K>0$ and $M_0>0$ such that for all $\| x \|_2 \geq K$, $\frac{s(x)}{\| x \|_2} \geq M_0$ and the target is such that for all $\| x \|_2 \geq K$, \begin{equation*}
    A_i(x)=c(x)\left( B\cdot x \right)_i,
\end{equation*}
where $B$ is a positive definite matrix such that for all for all $i \in \{ 1,...,d \}$, $b_{i i} - \sum_{j \neq i} |b_{i j}| \geq m >0$ and
if $M=\max\{ \sum_{j=1}^d|b_{ij}|, i=1,...,d \}$, then $c$ satisfies for all $\| x \|_2 \geq K$
\begin{equation*}
    c(x)\frac{\|  x \|_2^2}{s(x)} > \frac{27}{2}\frac{M}{m}d^3.
\end{equation*}
\end{remark}

\subsection{Comparison with results on the Original Zig-Zag}

In this section we will translate the assumptions and the results of Section \ref{stability.convergence:00} in the case of the original Zig-Zag process, which arises when we use the constant speed function $s(x)=1$. In this setting, we will see that all the assumptions made in Section \ref{stability.convergence:00} are weaker versions of assumptions made in \cite{bierkens.roberts.zitt:2019}. This will serve as a way to justify our assumptions and at the same time will allow us to prove exponential ergodicity of the original Zig-Zag process under weaker assumptions than the ones of Theorem 2 of \cite{bierkens.roberts.zitt:2019}.

Our first observation is that in the original Zig-Zag case where $s \equiv 1$, Assumption  \ref{non.evanescence.assumption:1} is implied by the following growth condition. 

\begin{assumption}\label{zz.assumption:101}
There exists $\epsilon>0 , c' \in \mathbb{R}$ such that for all $x \in \mathbb{R}$, $U(x)\geq (d+\epsilon) \log (\| x \|)-c'$. 
\end{assumption}

\begin{remark}
Assumption \ref{zz.assumption:101} is Growth condition 2 of \cite{bierkens.roberts.zitt:2019}, assumed in order to prove non-evanescence of the original Zig-Zag process.
\end{remark}

Secondly, we observe that in the setting of the original Zig-Zag, Assumption \ref{ass.lyapunov:1} is the following. 

\begin{assumption}\label{zz.assumption:102}
Assume that for the refresh rates there exists $\bar{\gamma}$ such that for all $i \in \{ 1,...,d \}, (x,\theta) \in E$ , $\gamma_i(x,\theta_{-i})=\gamma_i(x) \leq \bar{\gamma}$.\\
Furthermore, assume that there exists $R>0$ and $A>0$ so that for all $x \notin B(0,R) $
\begin{equation*}
\|\nabla U(x)\|_1 > A > \max \{ 3d \bar{\gamma} , 4d(d-1)\bar{\gamma} \}.
\end{equation*}
\end{assumption}

\begin{remark}
We observe that this is a weaker version of Growth condition 3 of \cite{bierkens.roberts.zitt:2019}, necessary for proving exponential ergodicity of the original Zig-Zag process. Instead of asking that $\lim_{\| x\| \rightarrow \infty}\| \nabla U(x) \|_1=+\infty$, we only ask that the limit is bounded below by a constant that may depend on the dimension of the space.
\end{remark}

Furthermore, in the case of the original Zig-Zag, Assumption \ref{the.one.that.always.holds:1} is the following.

\begin{assumption}\label{the.second.one.that.always.holds:1}
If $h_0(x)=x$ and for all $n \in \mathbb{N}$, $h_n$ is defined as in (\ref{iterative.logarithms:1}),
then there exists an $n \in \mathbb{N}$ such that
\begin{equation}\label{growth.always.verifiable:102}
 \lim_{\| x \| \rightarrow \infty}\frac{h_n \left(\| \nabla U(x)  \|_1 \right)}{  U(x)  }=0.
\end{equation}
\end{assumption}

\begin{remark}
We note that Assumption \ref{the.second.one.that.always.holds:1} is almost always true in any practical setting where $U \in C^1$. Furthermore, it is a relaxed version of Growth Condition 3 of \cite{bierkens.roberts.zitt:2019}.
\end{remark}

Finally, in the case of the original Zig-Zag, Assumption \ref{ass.lyapunov:2}, is equivalent to the following.

 \begin{assumption}\label{zz.assumption:103}
$U \in C^2$ and for all $\delta>0$
\begin{equation}\label{hess.grwoth.zz:101}
\lim_{\| x \| \rightarrow \infty} \dfrac{1}{\| \nabla U(x) \|_1} \sum_{j=1}^d \dfrac{\sum_{i=1}^d |\partial_i \partial_jU(x) | }{(1+|\partial_jU(x)|)(1+\log(1+\delta|\partial_jU(x)|))}=0.
 \end{equation}
 \end{assumption}

 \begin{remark}
As mentioned in Remark \ref{weird.assumption.remark:321}, Assumption \ref{zz.assumption:103} is weaker than Growth condition 3 of \cite{bierkens.roberts.zitt:2019}.
\end{remark}

Using these assumptions, we see that an immediate corollary of Theorem \ref{geom.ergo.suzz:1} is the following.

\begin{corollary}[Exponential Ergodicity of original Zig-Zag]\label{original.zz.geom.ergo:1}
Let $(Z_t)_{t \geq 0}=(X_t,\Theta_t)_{t \geq 0}$ be a $d$-dimensional original Zig-Zag  process. Assume that $U \in C^3$, and has a non-degenerate local minimum. Assume further that Assumptions \ref{zz.assumption:101}, \ref{zz.assumption:102}, \ref{the.second.one.that.always.holds:1} and \ref{zz.assumption:103} hold. 
Then the original Zig-Zag process is exponentially ergodic, i.e. there exist $M:E \rightarrow [1,+\infty)$, and $\rho<1$ such that for any $(x,\theta) \in E$,
\begin{equation}\label{geom.ergo.zz:11}
\| \mathbb{P}_{x,\theta}(Z_t \in \cdot)-\mu(\cdot) \|_{TV} \leq M(x,\theta)\rho^t.
\end{equation}
\end{corollary}

\subsection{Space Transformation and Uniform Ergodicity}\label{relationship.suzz.zz:00}
When we focus on the one dimensional process, we can prove that it is a space transformation of an original, one-dimensional Zig-Zag process. We have the following.
\begin{proposition}[One Dimensional SUZZ as Space Transformation]\label{space.transformation:1982}
Consider a one-dimensional SUZZ process $Z_t=(X_t,\Theta_t)_{t \geq 0}$ with strictly positive speed function $s \in C^2$, targeting a measure $\mu$ as in (\ref{zz.inv:3}). Assume that the rates satisfy (\ref{rates.formula:2}) and 
let 
\begin{equation}\label{space.transform:001}
f(x)=\int_0^x\frac{1}{s(u)}du 
\end{equation}
and 
\begin{equation}
    \pm M^{\pm}=\lim_{x \rightarrow \pm \infty}f(x) \in \mathbb{R} \cup \{ -\infty , + \infty \}.
\end{equation}
 Then, the process $(Y_t,\Theta_t)_{t \geq 0}$, where $Y_t=f(X_t)$, is a one-dimensional original Zig-Zag process, defined on $(-M^-,M^+) \times \{ -1,+1 \}$. If the SUZZ process is non-explosive, then $(Y_t,\Theta_t)_{t \geq 0}$ has invariant measure $\nu$ where
\begin{equation}\label{inv.of.transformed.process:1}
\nu(dy,d\theta)=\frac{1}{\tilde{H}}\exp\{ -\tilde{U}(y) \}dy d\theta
\end{equation}
and
\begin{equation}\label{potential.of.transformed.process:1}
\tilde{U}(y)=U(f^{-1}(y))-\log s(f^{-1}(y)).
\end{equation}
\end{proposition}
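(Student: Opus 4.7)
The plan is to show that $f$ provides a natural change of variables that simultaneously transforms the ODE into unit-speed translation, maps the switching rates to the canonical Zig-Zag form for the potential $V$, and pushes forward the invariant measure $\mu$ to $\nu$.

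First I would verify the geometric setup: since $s>0$ and continuous, $f$ is a strictly increasing $C^3$ diffeomorphism from $\mathbb{R}$ onto its image, which by the definition of $M^\pm$ is exactly the interval $(-M^-, M^+)$. Noting $(f^{-1})'(y) = s(f^{-1}(y))$, one then applies the chain rule to the deterministic flow: while the SUZZ trajectory satisfies $\dot X_t = \Theta_t s(X_t)$, one gets
\begin{equation*}
\dot Y_t = f'(X_t)\dot X_t = \frac{1}{s(X_t)}\cdot \Theta_t s(X_t) = \Theta_t,
\end{equation*}
so between jumps $Y_t$ moves with unit velocity $\Theta_t$, matching the deterministic part of a one-dimensional Zig-Zag.

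Next I would verify the switching mechanism. Between jumps, the filtration and the clocks of SUZZ are driven by the same Poisson rates regardless of the coordinate in which we express them, so the rate at which the direction flips, viewed as a function of the current $Y_t$, is simply $\lambda(f^{-1}(Y_t),\Theta_t)$. Using (\ref{rates.formula:2})--(\ref{rates.formula:3}) and again $(f^{-1})'(y)=s(f^{-1}(y))$, I would compute
\begin{equation*}
V'(y) = U'(f^{-1}(y)) \cdot s(f^{-1}(y)) - \frac{s'(f^{-1}(y))}{s(f^{-1}(y))} \cdot s(f^{-1}(y)) = A(f^{-1}(y)),
\end{equation*}
hence $[\theta V'(y)]^+ = [\theta A(f^{-1}(y))]^+$. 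Defining $\tilde\gamma(y) := \gamma(f^{-1}(y))$, the rate of $(Y_t,\Theta_t)$ becomes exactly $[\theta V'(y)]^+ + \tilde\gamma(y)$, which is the canonical form of the one-dimensional Zig-Zag rate associated with potential $V$. Since flipping the single velocity component is the same operation in both coordinates, the jump kernel also matches.

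For the invariant measure, I would push $\mu$ forward by $(x,\theta)\mapsto (f(x),\theta)$. For a bounded test $g$, the change of variables $y=f(x)$, $dx = s(f^{-1}(y))\,dy$ gives
\begin{equation*}
\int g(f(x),\theta)\, e^{-U(x)}\,dx = \int g(y,\theta)\, e^{-U(f^{-1}(y)) + \log s(f^{-1}(y))}\,dy = \int g(y,\theta)\, e^{-V(y)}\,dy,
\end{equation*}
so the pushforward of $\mu$ is $\nu$ up to the normalising constant, confirming that the transformed process leaves $\nu$ invariant. Combining the unit-speed flow, the matching rates, the flip map and the pushforward identifies $(Y_t,\Theta_t)$ as the original Zig-Zag on $(-M^-,M^+)\times\{\pm 1\}$ with potential $V$.

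The main obstacle will be the boundary behaviour when the SUZZ dynamics are explosive, i.e.\ when $M^+$ or $M^-$ is finite. In that case the transformed state space is a bounded interval and one must argue that $(Y_t,\Theta_t)$ never reaches the boundary, so the standard Zig-Zag construction on $(-M^-,M^+)$ is well-posed. I would derive this directly from Theorem \ref{non.explo:1}: non-explosivity of SUZZ, expressed through $\zeta=+\infty$ a.s., means $X_t$ stays in $\mathbb{R}$ for all $t$, which by continuity of $f$ forces $Y_t \in (-M^-,M^+)$ for all $t$. Once this is established, the three ingredients above identify the law of $(Y_t,\Theta_t)$ with that of the Zig-Zag for $V$ by standard uniqueness of PDMPs with prescribed flow and intensity.
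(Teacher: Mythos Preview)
Your proof is correct and follows essentially the same route as the paper: transform the flow via the chain rule to get unit speed, rewrite the switching rate as $\lambda(f^{-1}(y),\theta)=[\theta V'(y)]^{+}+\tilde\gamma(y)$, and identify the result as a standard one-dimensional Zig-Zag with potential $V$. Your treatment is in fact more complete than the paper's, which omits the refresh term $\gamma$ in the rate computation, defers the invariance of $\nu$ to ``standard results'' rather than doing the explicit pushforward, and does not spell out the boundary argument via Theorem~\ref{non.explo:1}.
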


Using Proposition \ref{space.transformation:1982}, we can prove that the one-dimensional SUZZ process with explosive deterministic dynamics is 
uniformly ergodic. This means that it is exponentially ergodic and the mixing time can be bounded by a quantity that does not depend on the starting point. This is a consequence of the fact that explosive deterministic dynamics have $\infty$ as entrance boundary. Our current proof, presented in Appendix \ref{proof.of.space.uniform:1}, heavily relies on the fact that the one-dimensional process is a space transformation of an original Zig-Zag.

\begin{theorem}[Exponential and Uniform Ergodicity in One Dimension]\label{uniform.ergodicity:1}
Consider a one dimensional SUZZ process $Z_t=(X_t,\Theta_t)_{t \geq 0}$ with strictly positive speed function $s \in C^2$. Assume that the rates satisfy (\ref{rates.formula:2}) and Assumptions \ref{non.explo.condition:0} and \ref{ass.lyapunov:1} hold. Then the process is non-explosive, it has $\mu$ defined in (\ref{zz.inv:3}) as invariant and is exponentially ergodic.  
    Assume further, that for some $x \in \mathbb{R}$ and for any $\theta= \pm 1$ the deterministic flow of the process $\{\Phi_t(x,\theta) , t \geq 0\}$ has a finite explosion time $t^*(x,\theta)$. Then the process is uniformly ergodic, i.e. there exists a $M>0$ and $\rho<1$ such that for any $(x,\theta) \in E$ and $t \geq 0$,
\begin{equation*}
\| \mathbb{P}_{x,\theta}(Z_t \in \cdot)-\mu(\cdot) \|_{TV}\leq M \rho^t.
\end{equation*}
\end{theorem}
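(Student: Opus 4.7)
The plan is to transfer the problem to a bounded state space via Proposition \ref{space.transformation:1982} and then establish Doeblin's condition there. First, non-explosivity and invariance of $\mu$ come for free from Theorems \ref{non.explo:1} and \ref{inv.measure.speed.up:1}, since in one dimension Assumption \ref{non.evanescence.assumption:1} reduces to Assumption \ref{non.explo.condition:0}, so all required hypotheses are in force. The real work is uniform ergodicity. Applying Proposition \ref{space.transformation:1982}, the process $(Y_t,\Theta_t)=(f(X_t),\Theta_t)$ with $f(x)=\int_0^x s(u)^{-1}\,du$ is a one-dimensional original Zig-Zag, and the hypothesis $t^*(x,\theta)<\infty$ forces $M^\pm<\infty$, so $(Y_t,\Theta_t)$ lives on the bounded set $(-M^-,M^+)\times\{-1,+1\}$ with potential $V(y)=U(f^{-1}(y))-\log s(f^{-1}(y))$, which diverges at the boundary by Assumption \ref{non.explo.condition:0}. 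Since the map $(x,\theta)\mapsto(f(x),\theta)$ is a measurable bijection, total variation distances are preserved and uniform ergodicity of the SUZZ is equivalent to uniform ergodicity of $(Y_t,\Theta_t)$.

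By standard Meyn--Tweedie theory, uniform ergodicity of $(Y_t,\Theta_t)$ is equivalent to a Doeblin minorization: $\mathbb{P}_{y,\theta}((Y_T,\Theta_T)\in\cdot)\geq \delta\,\kappa(\cdot)$ uniformly in $(y,\theta)$ for some $T>0$, $\delta>0$ and probability measure $\kappa$ on $(-M^-,M^+)\times\{-1,+1\}$. I would prove this in two stages. First, show that from any starting state the process enters a fixed compact set $K_\epsilon=[-M^-+\epsilon,M^+-\epsilon]\times\{-1,+1\}$ within a deterministic time $T_1$ with probability at least $p_1>0$, uniformly in the starting state. Second, since $V'$ is continuous and hence the rates are bounded on $K_\epsilon$, the standard minorization argument for Zig-Zag with bounded rates on a compact interval applies: fix a small target interval $A\subset K_\epsilon$, consider a path with at most two direction flips carrying the particle into $A$ in additional time $T_2$, and bound its probability from below by a constant $c>0$ using boundedness of the rates on $K_\epsilon$. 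Concatenating at $T=T_1+T_2$ with $\delta=p_1 c$ gives Doeblin, and hence uniform ergodicity.

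The main obstacle is the first stage, because the switching rate $[\theta V'(y)]^++\gamma(y)$ diverges near $\pm M^\pm$. The plan is to turn this divergence to our advantage: starting from $(y,+1)$ with $y$ close to $M^+$, the probability that no flip occurs while the particle travels a further distance $\eta$ is at most
\begin{equation*}
\exp\!\left\{-\int_y^{y+\eta}[V'(u)]^+\,du\right\}\leq \exp\bigl\{-(V(y+\eta)-V(y))\bigr\},
\end{equation*}
which tends to zero as $y\to M^+$ since $V(y)\to\infty$ there, and analogously near $-M^-$. Thus the particle flips rapidly when it is near the boundary and subsequently moves at unit speed toward the interior, reaching $K_\epsilon$ in time at most $M^-+M^+$. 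The rigorous implementation would build a Lyapunov bound of the form $\mathbb{E}_{y,\theta}[\tau_{K_\epsilon}]\leq C$ uniformly in $(y,\theta)$, using for instance the weight $W(y)=-\log\min\{M^+-y,\,y+M^-\}$ or direct hitting-time estimates driven by the rate blow-up, and then convert it to the required uniform lower bound via Markov's inequality on $\tau_{K_\epsilon}$ together with a short residency estimate ensuring that $(Y_{T_1},\Theta_{T_1})\in K_\epsilon$ with probability bounded below.
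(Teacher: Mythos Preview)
Your overall strategy---transform to a Zig-Zag on a bounded interval via Proposition \ref{space.transformation:1982} and then establish a Doeblin condition---is sound in principle, but it is not the route the paper takes, and your Stage~1 argument has a gap as written.

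The paper's argument is considerably shorter. After the same transformation to $(Y_t,\Theta_t)$ on $(-M^-,M^+)\times\{\pm 1\}$, the paper does not attempt Doeblin directly. Instead it observes that $\liminf_{y\to\pm M^\pm}\lambda_Y(y,\pm 1)>0$ (from Assumption \ref{ass.lyapunov:1}), so the one-dimensional Zig-Zag drift argument of \cite{bierkens.roberts_scaling:2017} produces an explicit Lyapunov function of the form $V(y,\theta)=\exp\{a^\pm y+b^\pm\theta\}$ on each tail, satisfying $\mathcal{L}_Y V\le -cV+b\mathbf{1}_{[-y_1,y_1]}$. Standard Meyn--Tweedie theory then gives $V$-weighted exponential ergodicity for $(Y_t,\Theta_t)$, which pulls back through $f$ to $Z_t$ with weight $V(f(x),\theta)$. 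The punchline is simply that this exponential $V$ is a continuous function on the \emph{bounded} interval $(-M^-,M^+)$, hence bounded, so the $V$-weighted bound is already a uniform bound. No separate hitting-time or minorization analysis is needed.

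Regarding your Stage~1: the displayed inequality is correct, but the sentence following it is not. For fixed $\eta>0$ the bound $\exp\{-(V(y+\eta)-V(y))\}$ is only defined for $y<M^+-\eta$, so you cannot send $y\to M^+$ while keeping $\eta$ fixed; and if you let $\eta\to 0$ the bound becomes trivial. What the integrated-rate divergence actually gives you is that the first flip occurs almost surely before the boundary, with location distributed so that its tail is $\exp\{-(V(\cdot)-V(y))\}$; from this one can extract a uniform hitting-time bound for $K_\epsilon$, but it requires a careful argument handling possible flips back toward the boundary, or a genuine Lyapunov calculation. Your proposed $W(y)=-\log\min\{M^+-y,y+M^-\}$ is not obviously adequate, since $\mathcal{L}_Y W$ involves $V'$ rather than $V$ and need not be controlled by $W$. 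Ironically, the cleanest Lyapunov function to close your argument is exactly the bounded exponential one the paper uses, at which point the Doeblin detour is no longer needed.
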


We emphasise, however, that the SUZZ algorithm cannot necessarily be written as a space transformation of an original Zig-Zag in dimension higher than $1$. In Figure \ref{two.dimensional.suzz.is.not.space.transformation.of.zz:1} we illustrate the contradiction that may occur if such a space transformation were to exist. We consider a $d=2$ case, and assume (to reach a contradiction) that there does exist such a transformation $\phi $.
The left figure represents the movement of a two-dimensional SUZZ process starting from $x_1$ and ending at $x_4$. The right figure represents the movement of the $\phi$-space transformed process, assumed to be an original Zig-Zag, starting from $\phi(x_1)$ and ending at $\phi(x_4)$. There are two paths from $x_1$ to $x_4$, passing through and switching at $x_2$ or $x_3$ respectively. If the speed function $s(x)$ takes smaller values on the path via $x_2$, then the process arrives at $x_4$ faster via the $x_3$ path rather than via the $x_2$ path. The same thing must hold for the $\phi$-transformed process, i.e. the process arrives to $\phi(x_4)$ faster via $\phi(x_3)$ rather than $\phi(x_2)$. However, the transformed process moves with constant unit speed, as it is an original Zig-Zag process. Furthermore, the two paths from $\phi(x_1)$ to $\phi(x_4)$, passing either via $\phi(x_2)$ or via $\phi(x_3)$ have the same length. Therefore the time it takes for the transformed process to traverse either of the two paths from $\phi(x_1)$ to $\phi(x_4)$ is the same. This gives a contradiction and establishes that the SUZZ process cannot be spaced transformed to an original Zig-Zag process in dimension $d \geq 2$.

As a result of this discussion, we cannot rely on the existence of such a transformation between SUZZ and original Zig-Zag, so results for SUZZ cannot easily be obtained from those for original Zig-Zag by simple transformation arguments. This also means that we do not currently have a way to extend the uniform ergodicity result of Theorem \ref{uniform.ergodicity:1} to higher dimensions, since the proof heavily relies on the space transformation property of the one-dimensional SUZZ process. Simulation results, however, seem to suggest that the starting position does not heavily influence the algorithmic performance, and we suspect the uniform ergodicity holds for higher dimensions as well.

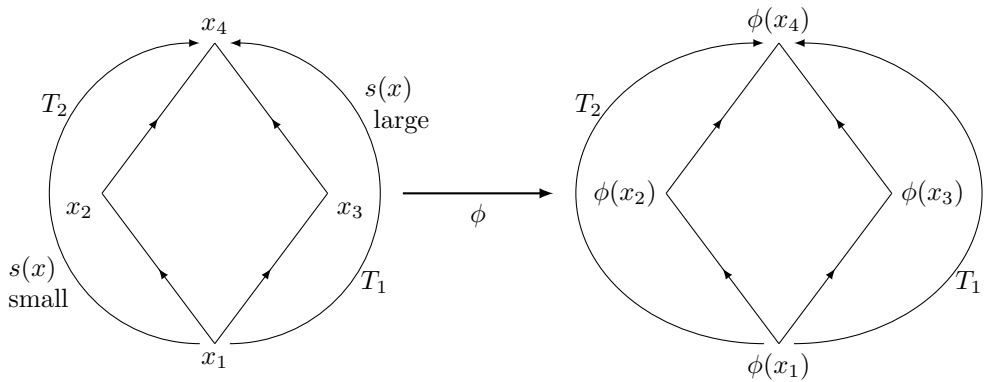
\begin{figure}
\centering
\begin{tikzpicture}
\draw[middlearrow={latex}] (0,-2) -- (1.5,0);
\draw[middlearrow={latex}] (0,-2) -- (-1.5,0);
\draw[middlearrow={latex}] (-1.5,0) -- (0,2);
\draw[middlearrow={latex}] (1.5,0) -- (0,2);
\draw node[below] at (0,-2) {$x_1$};
\draw node[below left] at (-1.5,0) {$x_2$};
\draw node[below right] at (1.5,0) {$x_3$};
\draw node[above] at (0,2) {$x_4$};
\draw[-latex] (-0.2,-2) arc  (270:90:2cm) node[pos=0.3,align=left,left] {$s(x)$\\ small} node[pos=0.7,align=left,left] {$T_2$} ;
\draw[-latex] (0.2,-2) arc  (-90:90:2cm) node[pos=0.7,align=left,right] {\hspace{0.05cm}$s(x)$\\\hspace{0.05cm} large} node[pos=0.3,align=left,right] {$T_1$};
\draw [-latex,thick] (2.5,0) -- node[midway,sloped,below] {$\phi$} (4.5,0);
\draw[middlearrow={latex}] (7.5,-2) -- (9,0);
\draw[middlearrow={latex}] (7.5,-2) -- (6,0);
\draw[middlearrow={latex}] (6,0) -- (7.5,2);
\draw[middlearrow={latex}] (9,0) -- (7.5,2);
\draw node[below] at (7.5,-2) {$\phi(x_1)$};
\draw node[left] at (6,0) {$\phi(x_2)$};
\draw node[right] at (9,0) {$\phi(x_3)$};
\draw node[above] at (7.5,2) {$\phi(x_4)$};
\draw[-latex] (7.3,-2) arc  (270:90:2.5cm and 2cm) node[pos=0.7,align=left,left] {$T_2$} ;
\draw[-latex] (7.7,-2) arc  (-90:90:2.5cm and 2cm) node[pos=0.3,align=left,right] {$T_1$};
\end{tikzpicture}
\caption{{\it Figure explaining why a two dimensional SUZZ is not in general a space transformation $\phi$ of an original Zig-Zag process. The figure on the left shows two possible SUZZ paths from $x_1$ to $x_4$, passing via either $x_2$ or $x_3$ and switching directions there. Assuming that the SUZZ process was a space transformation of an original Zig-Zag, via a function $\phi$, the figure on the right shows the two paths of the space transformed original Zig-Zag process, from $\phi(x_1)$ to $\phi(x_4)$, via either $\phi(x_2)$ or $\phi(x_3)$. The times $T_1$ and $T_2$ to traverse the two paths from $x_1$ to $x_4$ on the left figure, depending on the speed function, do not have to be the same. The same times, $T_1,T_2$, would also be the times to traverse the two paths from $\phi(x_1)$ to $\phi(x_4)$ on the $\phi$-transformed right figure.  However, if the $\phi$-transformed right figure was an original Zig-Zag, moving with constant unit speed, these two times would had to be the same.}
}\label{two.dimensional.suzz.is.not.space.transformation.of.zz:1}
\end{figure}

\section{Choice of Speed Function}\label{choice.of.speed.function:1}
A natural objective is to choose the speed function that generates an algorithm which is as efficient as possible. To get some intuition into how to achieve this, consider the one-dimensional 
SUZZ process $(X_t,\Theta_t)_{t \geq 0}$ with speed function $s$ and $f$ as given in (\ref{space.transform:001}).
From Proposition \ref{relationship.suzz.zz:00}, 
 $(Y_t,\Theta_t)=(f(X_t),\Theta_t)$ is an original Zig-Zag process, targeting a measure with negative log-density given by $\tilde{U}(y)=U(f^{-1}(y))-\log s(f^{-1}(y))$, defined on a subset of $\mathbb{R}$. 
 Therefore, instead of using SUZZ, one could equivalently use the original Zig-Zag $(Y_t,\Theta_t)$, target the potential $\tilde{U}$ and then use the path of $f^{-1}(Y_t)$ as a way to sample from the measure of interest. This is very similar in spirit to the work of \cite{johnson.geyer:12}. In summary, in the one-dimensional case, the goal of choosing the most efficient $s$, boils down to choosing an invertible space transformation $f$, and analysing an original Zig-Zag
algorithm on the transformed potential
\begin{equation*}
\tilde{U}(y)=U(f^{-1}(y))-\log s(f^{-1}(y))=U(f^{-1}(y)) + \log f'(f^{-1}(y)).
\end{equation*}

A natural candidate suggested by this is the choice $s(x)= \exp \{ U(x) \}$ leading to the space transformation
\begin{equation*}
f(x)=\int_0^x \exp \{ -U(x) \} dx=F(x)-\pi((-\infty , 0]),
\end{equation*}
where $F$ is the CDF of $\pi$. Using this SUZZ is equivalent to run an original Zig-Zag on the measure with negative log-density $\tilde{U} \equiv 0$ (i.e. the Lebesgue measure) and then transform the values back according to the function $f^{-1}(y)$. There are similarities here with inverse CDF sampling. However, this choice of $s$ is precluded by  Assumption \ref{non.explo.condition:0} as it leads to an explosive SUZZ, corresponding to the transformed Zig-Zag process eventually hitting the boundary of the transformed space (either $f(+\infty)$ or $f(-\infty)$).

This discussion suggests that we might obtain an efficient method by picking $s$ such that $s(x)\exp\{ -U(x) \}$ decays to zero as $|x| \rightarrow \infty$ slowly.
While the equivalence of the SUZZ to an original Zig-Zag with appropriate transformation is only valid in the one-dimensional case, this strategy for choosing $s$ can be applied quite generally in multi-dimensional settings.

\subsection{A Computational Efficiency Criterion in the One-Dimensional case}\label{efficiency:00}

Computational efficiency of the algorithm goes far beyond qualitative convergence results such as
exponential ergodicity. The actual cost of implementing MCMC algorithms 
  is controlled by the number of computational operations that need to be performed
to obtain a desirable amount of samples from the target distribution.
In our setting the computational cost comes largely from evaluating the gradient of the log-likelihood of the target, which is needed in order to sample the direction switches. Therefore, in order to understand the algorithmic efficiency, we must study the number of the gradient log-likelihood evaluations needed to be performed until we get enough samples from the target. This section will try to answer this question for the one-dimensional SUZZ process. 

In an ideal setting, using Poisson thinning in a perfect way (see \cite{lewis.shedler:79}), and for any choice of speed function, the number of gradient log-likelihood evaluations (and therefore the computational cost) would be equal to the number of switches of direction. In practice, the actual number of gradient log-likelihood evaluations depends on the tightness of the bounds used in this Poisson thinning operation and is therefore difficult to use as
a consistent metric. Therefore we shall instead use
the number of direction switches as a unit for measuring the implementation cost of the algorithm. Our goal now is to define a quantity that depends on the speed function and provides a way to measure the performance of the algorithm per implementation cost.

For the remainder of this section, we focus on dimension one and closely follow \cite{bierkens.duncan:17}.

Let $N_T$ be the expected number of switches until time $T$, i.e. the average implementation cost of the algorithm. Then $N_T=\mathbb{E}[\int_0^T\lambda(X_s,\Theta_s)ds]$. Since the process is Harris recurrent, we have a Law of Large numbers and 
\begin{eqnarray}\label{N_0:1}
&   \hspace{2cm}  N_0:=\lim_{T \rightarrow \infty}\frac{N_T}{T} = \int \lambda(x,\theta) d\mu(x,\theta) = \\ \nonumber &\frac{1}{2H}\sum_{\theta= \pm 1} \int_{\mathbb{R}}\exp \{ -U(x) \}\lambda(x,\theta)dx 
 = \frac{1}{2H}\int_{\mathbb{R}}\exp \{ -U(x) \}|s(x)U'(x)-s'(x)|dx. \\ \nonumber
\end{eqnarray}
Consider a functional of interest $g \in  L^2(\mu)$, whose integral under $\mu$ we are trying to approximate. Let $(Z_t)_{t \geq 0}$ be a SUZZ process targeting $\mu$. Assume without loss of generality that $\mu(g)=0$ and consider the estimator 
\begin{equation}\label{g.t:1}
g_T=\frac{1}{T}\int_0^T g(Z_s)ds. 
\end{equation}
If the process satisfies a CLT then there exists an asymptotic variance $\gamma^2_g \in [0,+\infty)$ such that
\begin{equation}\label{ass.var.definition:1}
\lim_{T \rightarrow +\infty}T \cdot Var(g_T)=\gamma^2_g. 
\end{equation}
A way to measure the efficiency of the algorithm is the Effective Sample Size (ESS) (see \cite{dyk.park:11}) which approximates the number of independent samples the algorithm has generated from the target until time $T$. It is defined as 
\begin{equation}\label{eff.sample.size.definition:1}
ESS(T)=\frac{Var_{\mu}(g)}{Var(g_T)}.
\end{equation}
Since the cost of implementing the algorithm is the average number of switches, it seems natural to consider the quantity of ESS per average number of direction switches in order to evaluate the efficiency of the algorithm. 
Combining (\ref{N_0:1}), (\ref{ass.var.definition:1}) and (\ref{eff.sample.size.definition:1}) we get
\begin{equation}\label{quantity.to.minimize:1}
\lim_{T \rightarrow \infty}\frac{ESS(T)}{N_T} =  \frac{Var_{\mu}(g)}{\gamma_g^2 N_0}.
\end{equation}
Therefore, in order to choose the optimal $s$ that makes the algorithm the most efficient we need to minimize the quantity $\gamma_g^2 N_0$ over different speed functions. $N_0$ is written in terms of $s$ in (\ref{N_0:1}). We will now present a proposition that describes the asymptotic variance $\gamma_g^2$ in terms of $s$. Before that, we need to make an assumption. Let
\begin{equation}\label{V.function:1821}
V(x,\theta)=\exp \{ aU(x)- a\log s(x) + \frac{1}{2} \delta |A_i(x)| h_n(| A_i(x)|) \}
\end{equation}
where $h_n$ as in (\ref{iterative.logarithms:1}), and $a , \delta >0$ small enough so that if $\mathcal{L}$ is the operator defined for all $f \in C^1(E)$ as
\begin{equation}\label{d1.gener:1}
\mathcal{L}f(x,\theta)=\theta s(x) f'(x,\theta)+ \left( [\theta U'(x)]^++\gamma(x) \right)  \left(f(x,-\theta)-f(x,\theta)\right) ,
\end{equation}
then there exist $c,b>0$ and a compact set $C$, such that for all $(x,\theta) \in E$,
\begin{equation}\label{lyapunov:1822}
 \mathcal{L}V(x,\theta) \leq -c V(x,\theta) +b 1_{(x,\theta)\in C}.
\end{equation}
The fact that $(\ref{lyapunov:1822})$ holds for $a, \delta $ small enough will be later proved in Appendix \ref{app.non.explo:00}, in the proof of Theorem \ref{non.explo:1}. We now assume the following.
\begin{assumption}\label{the.not.so.good.assumption:1}
Let $\mathcal{L}$ be the operator in (\ref{d1.gener:1}).
Let $V$ defined as in (\ref{V.function:1821}) for $a,\delta >0$ small enough such that  (\ref{lyapunov:1822}) holds. Assume that there exists a $C>0$ such that for all $g \in L^1(E)$ satisfying $|g(x,\theta)| \leq V(x,\theta)$ for all $(x,\theta) \in E$, there exists a $\phi$ such that
\begin{equation*}
-\mathcal{L}\phi = g
\end{equation*}
and such that for all $(x,\theta) \in E$
\begin{equation*}
|\phi(x,\theta)| \leq C \ V(x,\theta).
\end{equation*}
\end{assumption}
\noindent This assumption is a result proven in \cite{glynn.meyn:96} in the case where $\mathcal{L}$ is the extended generator of a process, in the sense that for any $f \in C^1(E)$ the process
\begin{equation*}
M_t=f(X_t)-f(X_0)-\int_0^t\mathcal{L}f(X_s) \ ds
\end{equation*}
is a martingale. However, since we allow the process to have explosive deterministic dynamics, we can only guarantee that $M_t$ is a local martingale. We note here that in \cite{glynn.meyn:96} the authors claim that Assumption \ref{the.not.so.good.assumption:1} holds in our case as well, i.e. when $\mathcal{L}$ only induces a local martingale. However, to the best of our knowledge this is not something proven in the literature. Therefore, we make this assumption here and we present the following result under Assumption \ref{the.not.so.good.assumption:1}. This result describes the asymptotic variance $\gamma_g^2$ in terms of the speed function $s$.

\begin{proposition}\label{efficiency.proposition:1}
Assume that the rates satisfy (\ref{rates.formula:2}) and Assumptions \ref{non.explo.condition:0}, \ref{ass.lyapunov:1}, \ref{the.one.that.always.holds:1} and \ref{ass.lyapunov:2} hold. Let $g:E \rightarrow \mathbb{R}$ in the domain of $\mathcal{L}$, with  $\mu(g)=0$ and assume that $|g(x,\theta)| \leq V(x,\theta)$ for all $(x,\theta) \in E$, where $V$ is the function defined in (\ref{V.function:1821}) 
 for some $a<1 , \delta >0$ small enough such that (\ref{lyapunov:1822}) holds. Finally, assume that Assumption \ref{the.not.so.good.assumption:1} holds. Then, if $Z_t=(X_t,\Theta_t)$ is the one dimensional SUZZ process with speed function $s$, starting from the invariant measure $\mu$, we have 
\begin{equation*}
\frac{1}{\sqrt{T}}\int_0^T g(Z_s)ds \xrightarrow{T \rightarrow \infty}N(0,\gamma_g^2)
\end{equation*}  
in distribution where 
\begin{equation}\label{efficiency.as.var:1}
\gamma_g^2= \frac{1}{2H} \int_{\mathbb{R}} \left | s(x)U'(x)-s'(x) \right | \frac{1}{s^2(x)\exp\{ -U(x) \}} k^2(x) dx
\end{equation}
and
\begin{equation}\label{efficiency.as.var:1.5}
k(x)= \int_x^{+\infty}(g(y,+1)+g(y,-1))\exp\{ -U(y) \}dy .
\end{equation}
\end{proposition}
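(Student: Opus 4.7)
The plan is to follow the classical martingale approach to the CLT for ergodic Markov processes, adapted to the PDMP setting. Assumption \ref{the.not.so.good.assumption:1} provides a function $\phi$ solving the Poisson equation $-\mathcal{L}\phi=g$ with $|\phi(x,\theta)| \leq C\,V(x,\theta)$. By Dynkin's formula,
$$M_t \;:=\; \phi(Z_t) - \phi(Z_0) + \int_0^t g(Z_s)\,ds$$
is a local martingale. The Lyapunov bound (\ref{lyapunov:1822}) (with $a<1$, precisely the reason this restriction appears in the hypothesis) together with Theorem \ref{geom.ergo.suzz:1} yields $V \in L^{2+\varepsilon}(\mu)$ for sufficiently small $\varepsilon>0$, hence $\phi \in L^{2+\varepsilon}(\mu)$; a localization argument along the exit times $\zeta_m$ of Theorem \ref{non.explo:1} then upgrades $M_t$ to a square-integrable martingale. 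Rearranging,
$$\frac{1}{\sqrt T}\int_0^T g(Z_s)\,ds \;=\; \frac{M_T}{\sqrt T} \;+\; \frac{\phi(Z_0)-\phi(Z_T)}{\sqrt T},$$
and the second term vanishes in probability by Markov's inequality and stationarity.

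Next I would compute the predictable quadratic variation of $M_t$. Because $\phi$ is $C^1$ along the deterministic flow, $M_t$ jumps only at the switching times of $\Theta$, with jump size equal to $\phi(X_t,-\Theta_{t-}) - \phi(X_t,\Theta_{t-}) = \pm\,\chi(X_t)$, where $\chi(x) := \phi(x,+1) - \phi(x,-1)$. Consequently
$$\langle M \rangle_t \;=\; \int_0^t \lambda(X_s,\Theta_s)\,\chi(X_s)^2\,ds.$$
Birkhoff's ergodic theorem, applied using the exponential ergodicity of Theorem \ref{geom.ergo.suzz:1}, gives $\langle M \rangle_T/T \to \gamma_g^2$ almost surely, with $\gamma_g^2 = \int_E \lambda(x,\theta)\,\chi(x)^2\,d\mu(x,\theta)$. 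The martingale CLT for purely discontinuous square-integrable martingales (Lindeberg verified through $|\chi|\leq 2CV$ and $V\in L^{2+\varepsilon}(\mu)$) then delivers $M_T/\sqrt T \Rightarrow \mathcal{N}(0,\gamma_g^2)$.

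It remains to obtain the closed form of $\chi$. Writing $-\mathcal{L}\phi = g$ separately for $\theta = \pm 1$ and adding the two equations, the refresh terms $\gamma(x)$ cancel since they are $\theta$-independent; using $\lambda(x,+1) - \lambda(x,-1) = A(x)$ with $A(x) = s(x)U'(x) - s'(x)$, this yields the first-order linear ODE
$$-s(x)\,\chi'(x) + A(x)\,\chi(x) \;=\; g(x,+1)+g(x,-1).$$
Multiplying by $e^{-U(x)}$ recasts this as $-\tfrac{d}{dx}[s(x)e^{-U(x)}\chi(x)] = (g(x,+1)+g(x,-1))\,e^{-U(x)}$. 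Integrating from $x$ to $+\infty$ and using the boundary condition $s(y)e^{-U(y)}\chi(y) \to 0$ as $y\to+\infty$ (which follows from $|\phi|\leq CV$, Assumption \ref{non.explo.condition:0}, and the integrability of $V$ under $\mu$) one obtains $\chi(x) = k(x)/(s(x)e^{-U(x)})$, where $k$ is as in (\ref{efficiency.as.var:1.5}). Substituting into the formula for $\gamma_g^2$ and observing that in the canonical case $\gamma\equiv 0$ one has $\lambda(x,+1)+\lambda(x,-1) = |A(x)|$ recovers (\ref{efficiency.as.var:1}).

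The principal technical obstacle is promoting $M_t$ from a local to a genuine square-integrable martingale: because the deterministic dynamics may be explosive, the standard extended-generator machinery for PDMPs (\`a la Davis) does not apply off the shelf, and one has to localize on the balls used in the proof of non-explosion and then pass to the limit using moment bounds from the Lyapunov inequality. A closely related subtlety is verifying the Lindeberg condition for the jump-martingale CLT, which again reduces to controlling the tails of $V$ under $\mu$. The remaining work — the ODE for $\chi$ and the substitution — is routine.
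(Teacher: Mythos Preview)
Your proposal is correct and follows essentially the same route as the paper: solve the Poisson equation via Assumption~\ref{the.not.so.good.assumption:1}, form the Dynkin martingale $M_t$, split the ergodic average into $M_T/\sqrt{T}$ plus a vanishing boundary term, identify the asymptotic variance through the predictable bracket $\langle M\rangle_t=\int_0^t\lambda\,\chi^2\,ds$, and then derive and solve the first-order ODE for $\chi$ (the paper works with $\psi=\tfrac12\chi$, which accounts for a factor of $4$). The only notable difference is in the CLT step: the paper invokes the stationary-increments martingale CLT of Komorowski--Landim--Olla directly on $M_t$ under $\mathbb{P}_\mu$, whereas you go through bracket convergence plus a Lindeberg condition; both are standard, and your version makes the $L^{2+\varepsilon}$ requirement (and hence the role of $a<1$) more explicit. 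Your observation that the final formula~(\ref{efficiency.as.var:1}) tacitly requires the canonical case $\gamma\equiv 0$ (since $\lambda(x,+1)+\lambda(x,-1)=|A(x)|+2\gamma(x)$) is correct and is not spelled out in the paper.
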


Proposition \ref{efficiency.proposition:1} and equations (\ref{N_0:1}) and (\ref{quantity.to.minimize:1}) indicate that for a given function $g: E \rightarrow \mathbb{R}$ with $\mu(g)=0$, such that for all $a \in (0,1)$ we have $|g(x,\theta)| \leq \exp\{ aU(x)-a\log s(x) \}$ for all $(x,\theta) \in E$, satisfying the assumptions of Proposition \ref{efficiency.proposition:1}, we need to pick a speed function $s$ in order to minimize the quantity 
\begin{equation}\label{minimization.problem:1}
J[r]:=\gamma_g^2N_0=\int_{\mathbb{R}}|r'(x)|dx \int_{\mathbb{R}}\dfrac{|r'(x)|}{r^2(x)} k^2(x)dx,
\end{equation}
where 
\begin{equation*}
r(x)=s(x)\exp \{ -U(x) \},
\end{equation*}
and we need to impose the condition
\begin{equation*}
\lim_{|x|\rightarrow \infty}r(x)=0,
\end{equation*}
so that Assumption \ref{non.explo.condition:0} holds.

We will call the functional $J$ the {\bf inverse algorithmic efficiency}. Note that $J$ is invariant under constant scaling of function $s$. This is in accordance to the fact that we do not gain or lose any efficiency by speeding up Zig-Zag with a constant speed, for example by having velocities of the form $\{ \pm 2 \}$. 
\begin{remark}
The result of Proposition \ref{efficiency.proposition:1} can be generalised in the case where $\mu(g)$ is not necessarily zero. In the general case, the function $k$ in (\ref{efficiency.as.var:1.5}) used to define $J[r]$ would be \begin{equation*}\label{efficiency.as.var:1.6}
k(x)= \int_x^{+\infty}\left( g(y,+1)+g(y,-1)-\frac{1}{2}\mu(g) \right) \exp\{ -U(y) \}dy .
\end{equation*}
In practice, $\mu(g)$ is not a known quantity. Then one can use the asymptotically unbiased estimator $g_T$ in (\ref{g.t:1}) instead of $\mu(g)$ to calculate an approximation of the inverse efficiency $J$. 
 \end{remark}
\noindent Ideally, we would like to pick a speed function such that $r$ minimises (\ref{minimization.problem:1}). Note, however, that minimising (\ref{minimization.problem:1}) is not a well-posed problem. Indeed, let $r_0$ be a function such that $J[r_0]<\infty$ and $\lim_{|x|\rightarrow \infty}r_0(x)=0$. For any $n \in \mathbb{N}$ let 

\begin{equation}\label{minimizing.sequence:1}
\left\{\begin{array}{l}
r_n(x)=1 , \ |x| \leq n  \vspace{1.5mm}\\ 
r_n(x)=r_0(x-n), \ x > n\vspace{1.5mm}\\
r_n(x)=r_0(x+n), \ x<-n
\end{array}\right..
\end{equation} 
Then $J[r_n]\xrightarrow{n \rightarrow \infty}0$. At the same time, the only functions that satisfy $J[r]=0$ are the constant ones and since we impose the condition that $\lim_{|x|\rightarrow \infty}r(x)=0$, the only function $r$ that satisfies $J[r]=0$ is the function $r \equiv 0$.

Note, however, that the $n$th term of the minimising sequence $r_n$ is equal to $1$ on $[-n,n]$ and this means that $s(x)=\exp \{ U(x) \}$ for $x \in [-n,n]$. Heuristically, and as discussed in the beginning of Section \ref{choice.of.speed.function:1}, one could expect good performance in the ideal case where $s(x)$ could be set equal to $\exp\{ U(x) \}$ for $x \in [-n,n]$ for some large $n$.

In Table \ref{efficiencies.table:1} we present some examples, comparing the efficiency of different algorithms. As target distribution we consider a Normal with mean zero and variance one, an exponential with parameter one, symmetrically extended to the negative reals, a Student distribution with $3$ degrees of freedom and a distribution of the form 
\begin{equation*}
    \pi(x)=\frac{1}{H}\exp\{ -\left( 1+x^2 \right)^{1/4} \},
\end{equation*}
which we will call sub-exponential, since it has tails heavier than any exponential, but it does not decay polynomially fast. For each of these densities, except for the Student$(3)$, we are estimating the expectation of the distribution, i.e. we set $g(x,\theta)=x$. For the Student($3$) we are estimating the expectation the function $g(x)=\sgn(x)\log(1+|x|)$. This is to guarantee that the function $g$ verifies the growth assumptions of Proposition \ref{efficiency.proposition:1}. Regarding the speed function, we use the original Zig-Zag (i.e. $s(x)=1$), and we also use the speed functions \begin{equation}\label{one.dimensional.speed:1}
    s(x)=\left( 1+x^2\right)^{(1+k)/2}
    \end{equation}
    for $k=0,1,2,3$. These algorithms will be denoted by SUZZ($k$), where $k$  is the parameter in the exponent of the speed function. Choosing $k=0$ induces non-explosive deterministic dynamics, whereas choosing $k>0$ induces explosive ones. We will verify the assumptions used in Theorem \ref{uniform.ergodicity:1} for these speed functions is Appendix \ref{verification:000}. In Table \ref{efficiencies.table:1} we compare the inverse efficiencies of all the algorithms for all four targets. In order to numerically estimate the integrals arising in the definition of $J[r]$ we use the $\mathtt{integrate}$ function and the $\mathtt{polyroot}$ library of $\mathtt{R}$. We should emphasize that since we do not take into account some normalisation constants and since in the case of Student($3$) distribution we are estimating a different observable, the comparison in Table \ref{efficiencies.table:1} should only be made column-wise (i.e. for a given distribution compare different algorithms). 

For any target distribution, the algorithms SUZZ($0$) and SUZZ($1$) provide better results than the original Zig-Zag algorithm. Furthermore, for all targets except for the sub-exponential, the SUZZ($2$) algorithm performs better than the original Zig-Zag. SUZZ($3$) does not seem to perform that well and it only has better efficiency than the original Zig-Zag on the exponential target. Note that we do not present an efficiency value for SUZZ($3$) on the Student$(3)$ target since this algorithm does not satisfy Assumption \ref{non.explo.condition:0} and will in fact explode in finite time a.s. It is also worth noting that for all the targets, with the exception of the sub-exponential one, the inverse efficiency function of the algorithms seems to be "quadratic" with respect to $k$ and seems to be minimised when $k=1$. 

Finally, we should note that the notion of inverse algorithmic efficiency  is so far restricted to one-dimensional setting. Generalising this to higher dimensions would involve solving the Poisson equation of the multi-dimensional SUZZ process and is subject to further research.

\begin{table}
\begin{tabular}{l c c c c}
\multicolumn{5}{c}{Algorithmic Inverse Efficiency in One Dimension}\\
Algorithms  & Normal        & Exponential  & Sub-exponential$(0.5)$      & Student$(3)$      \\ \hline
Original Zig-Zag    &  $16$            & $80$            &  $57044$      & $34.2457$              \\
SUZZ(0)    & $4.9817$       & $26.3397$        & ${\bf 3536}$         & $7.9736$               \\ 
SUZZ(1)  &   ${\bf 4.4259}$       & ${\bf 7.1017}$        & $45948$    & ${\bf 2.4708}$             \\
SUZZ(2) & $14.9568$ & $19.0364$ & $1315827$  & $11.7397$ \\
SUZZ(3) & $45.6342$  & $30.0839$ & $24623975012$ & -
\end{tabular}
\caption{{\it J values, as introduced in (\ref{minimization.problem:1}), for various SUZZ algorithms targeting various distributions; SUZZ($k$) denotes the SUZZ algorithm with speed function of the form $s(x)=\left( 1+x^2\right)^{(1+k)/2}$; Smallest value for every column in bold.}}\label{efficiencies.table:1}
\end{table}

\section{Numerical Simulations}\label{simulations:00}
In this section we will present some computational results that aim to highlight the behaviour of SUZZ and compare it with original Zig-Zag. We will present results for one-dimensional and twenty-dimensional targets. As already suggested in Section \ref{efficiency:00}, the one dimensional SUZZ can vastly outperform the original Zig-Zag. However, it will be seen that there are significant advantages in using a speed function in higher dimensions as well.

First, we present numerical results on a one-dimensional Student target with three degrees of freedom, denoted by Student($3$), i.e. a target with density given by $\pi(x)=\frac{1}{H}\left(1+\frac{1}{3}x^2\right)^{-2}$. We used the Zig-Zag algorithm (ZZ) along with SUZZ(0) and SUZZ(1) algorithms, where SUZZ($k$) indicates the SUZZ algorithm with speed function given by (\ref{one.dimensional.speed:1}). We emphasise here that even though the deterministic dynamics of SUZZ(1) explode in finite time, the process will a.s.\ not explode due to Theorem 3.6. Finally, we also used a Random Walk Metropolis algorithm on a transformed state space, introduced in \cite{johnson.geyer:12} as a method that is geometrically ergodic even on heavy tailed targets. The proposal distribution is a one dimensional Normal(0,1) and the parameters of the space transformation are tuned using the guidance of the discussion in \cite{johnson.geyer:12}. We will be referring to this algorithm as Transformed Random Walk Metropolis (TRWM). For each of the four algorithms presented, we simulated 25 independent realisations of each process, until $N=10^4$ switches of direction occurred for the ZZ or SUZZ algorithms. In the case of TRWM we simulated for $N=10^4$ steps. To construct a sample from the ZZ and the SUZZ algorithms, we used the position of the process every $\delta$ time units ($\delta$-skeletons). Here $\delta$ is different for each algorithm and it is chosen in the following way. For each algorithm, we first run an initial run, which created a path of time length $S(N)$. Then we fixed $\delta=\frac{S(N)}{N}$ so that for this run, the size of the skeleton was equal to $N=10^4$. We used this fixed $\delta$ for all other runs of the algorithm, expecting each future skeleton to have a size roughly equal to $N=10^4$, which was indeed the case. This was done in order to guarantee fairness between the performance evaluation across all algorithms. More precisely, since we use the number of switches ($N=10^4$) as a unit to measure computational cost, it would make sense for all the algorithms that run for the same number of switches to produce roughly the same number of samples. For TRWM, since we run the algorithm for $N=10^4$ steps, the sample generated had a size of $N=10^4$. To analyse the performance of the algorithms we have used the Effective Sample Size (ESS) (see \cite{cowles.carlin:96}), computed using $\mathtt{coda}$ from $\mathtt{R}$. The ESSs were calculated after we transformed the sample via the function 
\begin{equation}\label{simulations.transformation:1}
    f(x)=\sgn(x)\log(1+|x|),
\end{equation} 
so that we can guarantee that the variance of the ESS is finite. All simulations were performed using {\tt MATLAB} in a computer with i7-8550U CPU and 1.80 GHz.

We present our results in Table \ref{one.dimensional.ess.table:1}. We present average and median ESS across 25 realizations (standard deviation in parenthesis). We also report the median ESS per likelihood evaluation and per minute of implementation time. The best performance is highlighted in bold letters. It is clear that both SUZZ algorithms outperform both the original Zig-Zag and the TRWM, in all criteria based on ESS, (ESS per switches, per likelihood evaluations and per implementation time). It is also interesting that the algorithm with the explosive deterministic dynamics seems to perform the best. This is consistent with Table \ref{efficiencies.table:1} where the inverse algorithmic efficiency of SUZZ(1) is the smallest of all algorithms targeting the Student(3).

\begin{table}
\footnotesize
\begin{tabular}{l l l l l l l l l}
\multicolumn{7}{c}{One Dimensional Student($3$), Number of Switches $N=10^4$}\\ \hline
Algorithms  & ESS(SD)   & Median ESS   & ESS/Lik.Eval.  &  ESS/min          \\
ZZ 	        & 5272.9 (1274.0)      & 5675.6            & $1.5 \cdot 10^{-4}$	      &	15765.6            \\
SUZZ(0)	    & 20755.8 (718.1)      & 20779.2      & $3.0 \cdot 10^{-2}$ 	      & 31483.6     \\
SUZZ(1)	        & {\bf 46346.2} (3154.6)      & {\bf 46397.8}	    & ${\bf 3.4 \cdot 10^{-2}}$   & {\bf 154659.3}    \\
TRWM 	        & 29.8 (14.0)      & 22.8	    & $0.2 \cdot 10^{-2}$ & 3257.1   \\
\end{tabular}
\caption{{\it  SUZZ, ZZ and TRWM algorithms targeting a one-dimensional Student($3$) distribution. For SUZZ($k$), we use the speed function as in (\ref{one.dimensional.speed:1}). The algorithms ran until $N=10^4$ switches (or steps for the TRWM) occurred and the average ESS (with Standard deviation in a parenthesis) along with the median ESS are presented. The median ESS per average likelihood evaluations and per average minutes of implementation time is also presented. All ESSs are calculated after we transform the sample via the function $f$ as in (\ref{simulations.transformation:1}). The best performance is highlighted with bold letters.}}\label{one.dimensional.ess.table:1}
\end{table}

As a second example, we used two SUZZ algorithms and ZZ to target a one-dimensional Cauchy distribution (i.e. $\pi(x)=\frac{1}{H}\left( 1+x^2 \right)^{-1}$). For the SUZZ algorithms we used speed functions of the form 
\begin{equation}\label{weird.d1.speed:1}
    s(x)=\max\{ 1, |x|^{1+k} \}
\end{equation}
for $k=0$ and $k=0.5$, denoted by SUZZ(0) and SUZZ(0.5). In Figure \ref{fig:qqplot.t1.d1} we present the Q-Q plots for these three algorithms against the Cauchy target. All the algorithms run for $N=10^4$ number of switches. It is clear that the SUZZ algorithms far outperform the original Zig-Zag process and the SUZZ algorithm with explosive deterministic dynamics ($k=0.5$) seems to have the optimal performance.

\begin{figure}
\centering
\subfloat[ZZ ]{\includegraphics[scale=1.5, width=4.5cm , trim= 2in 3in 2in 3.5in]{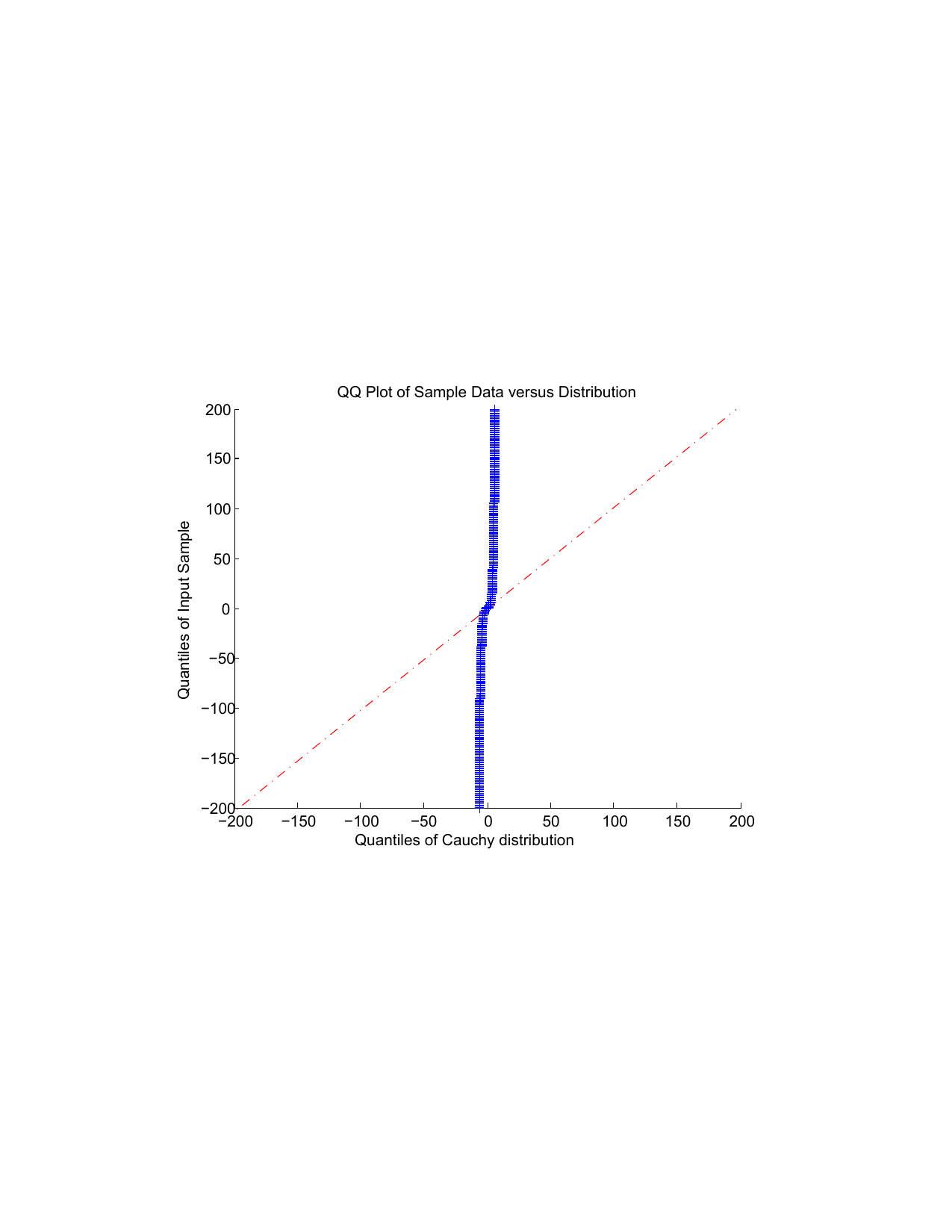}}\label{fig:qqplot.zz.cauchy.t1}
\hfill
\subfloat[SUZZ(0)]{\includegraphics[scale=1.5, width=4.5cm , trim= 2in 3in 2in 3.5in]{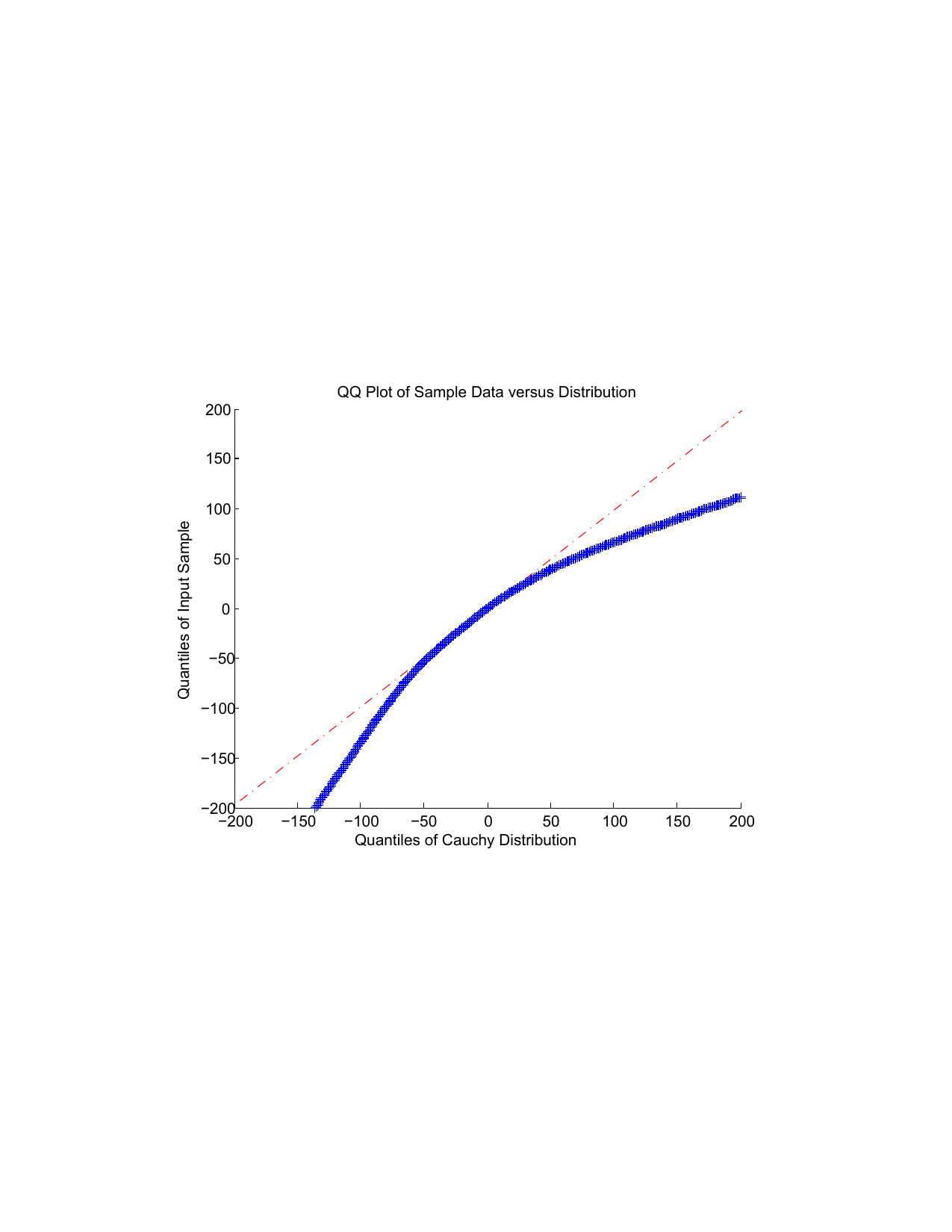}}\label{fig:qqplot.suzz0.t1.d1}
\hfill
\subfloat[SUZZ(0.5)]{\includegraphics[scale=1.5, width=4.5cm , trim= 2in 3in 2in 3.5in]{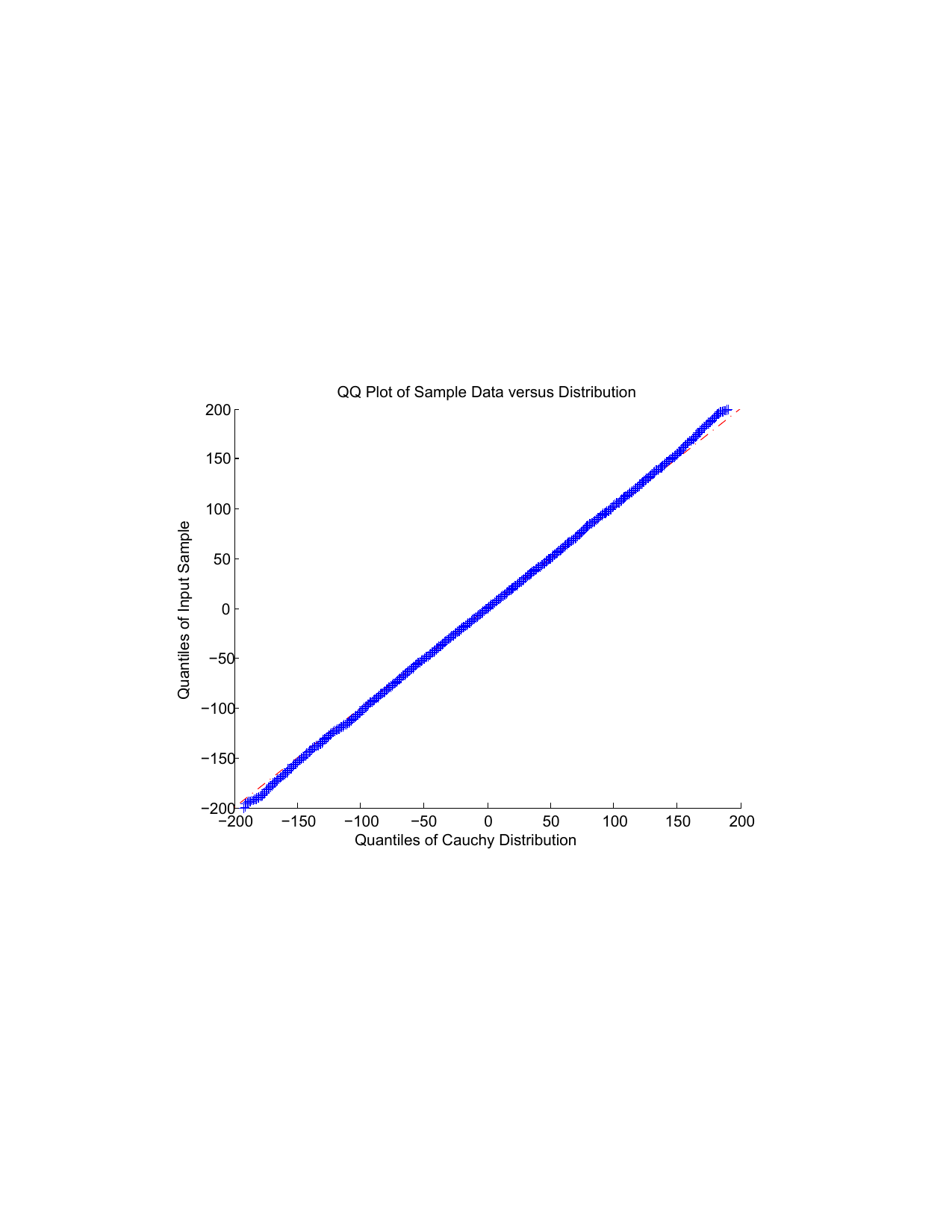}}\label{fig:qqplot.suzz05.t1.d1}
\caption{{\it Q-Q plots of various one-dimensional Speed Up algorithms targeting a Cauchy distribution. The algorithms have ran until $N=10^4$ switches of direction have occurred. The sample is created using the $\delta$-skeleton of the process for $\delta=0.1$. As $SUZZ(k)$ we denote the SUZZ algorithm with speed function of the form (\ref{weird.d1.speed:1}) and $k$ the parameter appearing in the equation.}}\label{fig:qqplot.t1.d1}
\end{figure}

Next, we present results on two twenty-dimensional targets. The first target is a twenty dimensional distribution with density of the form 
\begin{equation}\label{subexponential.20:1}
    \pi(x)=\frac{1}{H}\exp \left\{ -\left(1+\|x  \|_2^2\right)^{1/4} \right\},
\end{equation}
which we will call Sub-exponential($0.5$), since the tails decay like $\exp\{ -\| x \|_2^{0.5} \}$, slower than any exponential target but faster than any polynomial.  

The second target is a twenty-dimensional Student distribution, with $3$ degrees of freedom (denoted by Student($3$)), with scale matrix given by $B$ where

\begin{equation}\label{scale.matrix.student.3:1}
    B(i,j)=5, i \neq j , B(i,i)=30, i=1,2,3 , B(i,i)=20, i=4,5, \text{ and } B(i,i)=10, i=6,...,20.
\end{equation}
This means that 
\begin{equation}\label{20.d.student.3:1}
\pi(x)=\frac{1}{H}\left(1+\frac{1}{3}x^TB^{-1}x \right)^{-23/2}.
\end{equation}

The first target is in the setting of Proposition \ref{special.case.theorem:1}, therefore we know that SUZZ will be exponentially ergodic. Even though the second target is not in the setting of that proposition and no theoretical guarantees are established for the rate of convergence, we will see that the SUZZ process leads to numerical gains for both targets, compared to the original Zig-Zag and to TRWM algorithm, introduced in the one-dimensional simulations. We conjecture that some of the assumptions made in this document might not be necessary and the class of targets on which SUZZ can work well could be larger.

For both distributions we used four different algorithms to target them and we compare their performances. First of all, we used an original Zig-Zag process (ZZ). We also used two SUZZ processes, SUZZ($0$) and SUZZ($1$), where SUZZ($k$) denotes the SUZZ process with speed function given by
(\ref{speed.d20:1}). 
Note that SUZZ($0$) has non-explosive deterministic dynamics, while SUZZ($1$) has explosive ones. We present a general way to construct the deterministic dynamics for this type of speed functions in Appendix \ref{ode.solution:00}. Finally, we also used the Transformed Random Walk Metropolis (TRWM) algorithm, described in the one-dimensional simulations. The proposal distribution we used was a 20-dimensional Normal with identity covariance matrix and the parameters of the space transformation were tuned using the guidance of the discussion in \cite{johnson.geyer:12}. For each of the four algorithms presented, we simulated 25 independent realisations of each process, until $N=10^6$ switches of direction occurred for the ZZ or SUZZ algorithms. In the case of TRWM we simulated for $N=10^6$ steps. Having simulated a continuous time path, in order to construct a sample from the ZZ and the SUZZ algorithms we used the same procedure as in the one-dimensional simulations. We used the $\delta$-skeleton of the process, where $\delta$ was chosen after an initial run of the algorithm such that the sample size was roughly equal to the number of direction switches ($N=10^6$). As mentioned in the one-dimensional simulations, setting $\delta$ this way guarantees fairness between the performance evaluation across all algorithms. 
For TRWM, since we run the algorithm for $N=10^6$ steps, the sample generated had a size of $N=10^6$. All simulations were performed using {\tt MATLAB} in a computer with i7-8550U CPU and 1.80 GHz.

We present our results in Tables \ref{numerics.Sub05.log:1}, \ref{numerics.Sub05.mean:1} and \ref{numerics.20d.Student.3.pos.corr.fixed.N.SUZZ:1}. In Tables \ref{numerics.Sub05.log:1} and \ref{numerics.Sub05.mean:1} we report results on the Sub-exponential($0.5$) target, while in Table \ref{numerics.20d.Student.3.pos.corr.fixed.N.SUZZ:1} we report results concerning the Student($3$) target. We present average and median ESS across 25 realizations (standard deviation in parenthesis) and empirical probabilities of squares centered around $0$, containing $0.9$, $0.99$ and $0.999$ of the mass of the target and denoted Sq. 0.9, Sq 0.99 and Sq. 0.999 respectively. These squares were estimated using $\mathtt{mvtnorm}$ and $\mathtt{adaptMCMC}$ of $\mathtt{R}$. We also report the median ESS per likelihood evaluation and per minute of implementation time. For all algorithms, we consider the ESS of the first coordinate of the process, computed using the routine {\tt coda} of {\tt R}, but we note here that we recovered similar results when using the routine {\tt mcmcse} to estimate the multivariate ESS of the twenty dimensional algorithms. For the Student($3$) distribution (Table \ref{numerics.20d.Student.3.pos.corr.fixed.N.SUZZ:1}), the ESSs were calculated after we transformed the sample via the function (\ref{simulations.transformation:1})
so that we can guarantee that the variance of the ESS is finite, and the computation of ESS consistent across all 25 realisations of the chains. We did the same for the Sub-exponential($0.5$) distribution (Table \ref{numerics.Sub05.log:1}), but for that target we also present the ESS without any transformation of the sample (Table \ref{numerics.Sub05.mean:1}), since the variance of the ESS is finite when estimating the expectation of this target.  

All four algorithms provided a decent estimation of the probabilities of the squares, which can increase our trust that all algorithms converged to the right distribution. In terms of ESS, we observe that all SUZZ algorithms vastly outperformed the ZZ algorithm in terms of every criterion we used, i.e. ESS per number of switches, per number of likelihood evaluations and per implementation minutes. This shows that using a speed function in the context of PDMP algorithms can lead to significant benefits. Furthermore, the SUZZ algorithms can compare favourably to a state of the art algorithm like the TRWM in all three criteria (ESS per switches, ESS per likelihood evaluations and ESS per implementation minutes). For example, the SUZZ($1$) algorithm has twice better ESS per implementation time than the TRWM on the Student target. Notably, if the criterion is ESS per number of switches, which gives a theoretical upper bound on the ESS per likelihood evaluations for the SUZZ algorithm, the SUZZ algorithms perform at least 20 times better than the TRWM. 

We also note that although in our simulations the TRWM takes a lot less time to be implemented, there does not seem to be enough space to further reduce the implementation time of the TRWM code. On the other hand, the code of SUZZ is quite more complicated and a more qualified programmer could probably reduce the implementation time even further. More specifically, most of the simulation time was spent in finding the maximum of the rate function over a specific time horizon in order to perform Poisson thinning. If one could reduce the time spent in this type of maximisation sub-routines, one could significantly reduce the implementation time of SUZZ. Furthermore, while the likelihood evaluations of TRWM are always equal to the number of steps of the algorithm, one could try to further reduce the number of likelihood evaluations of the SUZZ algorithm if one has access to extra information on the structure of the target. One could also use ideas from \cite{sutton.fearnhead:21}, for example by adapting the time horizon over which the optimisation of the rate takes place, taking into account the previous switching times. This can be done without losing any theoretical guarantees since any choice of time horizon leads to stochastically identical algorithms. Furthermore, one can use ideas from \cite{corbella.spencer.roberts:22} to further optimise the Poisson thinning procedure and the implementation time, for example with the use of automatic differentiation schemes.

\begin{table}
\footnotesize
\begin{tabular}{l l l l l l l l l}
\multicolumn{7}{c}{20 Dimensional Sub-Exponential($0.5$), Number of Switches $N=10^6$, With Space Transformation.}\\ \hline
Algorithms  & ESS(SD)   & Median ESS   & Sq. 0.9 & Sq. 0.99 & Sq. 0.999 & ESS/Lik.Eval.  &  ESS/min          \\
ZZ 	        & 103661.4 (6347.7)  & 104665.5     &	0.9008	          & 0.9905	&  0.9991 &  $0.3 \cdot 10^{-3}$     & 124.9   \\
SUZZ(0)	    & {\bf 142663.2} (1511.3)      & {\bf 142382.6} &	{\bf 0.8998}            & {\bf 0.9899}  & {\bf 0.9990} 	      & $3.9 \cdot 10^{-3}$ & 2847.7              \\
SUZZ(1)	        & 134561.8 (2453.4)      & 134140.1 	    &   	0.8982         &	0.9897				 & {\bf 0.9990}	 & ${\bf 6.3 \cdot 10^{-3}}$   & {\bf 4471.3}    \\
TRWM 	        & 2767.0 (68.5)      & 2753.5    &	0.8994 	          & 0.9902 			 & 0.9992 	    & $2.8 \cdot 10^{-3}$ & 1966.8    \\
\end{tabular}
\caption{\vspace{1 mm}
{\it SUZZ, ZZ and TRWM algorithms targeting a twenty-dimensional Sub-exponential($0.5$) distribution, with density given by (\ref{subexponential.20:1}). For SUZZ($k$), we use the speed function as in (\ref{speed.d20:1}). The algorithms ran until $N=10^6$ switches (or steps for the TRWM) occurred and the average ESS (with standard deviation in a parenthesis) along with the median ESS is presented. The median ESS per average likelihood evaluations and per average minutes of implementation time is also presented. All ESS concern the first coordinate of the twenty-dimensional process and are calculated after we transform the sample via the function $f$ as in (\ref{simulations.transformation:1}). An estimation of probabilities assigned to various squares of $\mathbb{R}^{20}$ by the target distribution is also presented. The square denoted by "Sq $a$" means that target assigns probability $a$ inside the square. The best performance is highlighted with bold letters.}} \label{numerics.Sub05.log:1}
\end{table}

\begin{table}
\footnotesize
\begin{tabular}{l l l l l l l l l}
\multicolumn{7}{c}{20 Dimensional Sub-Exponential($0.5$), Number of Switches $N=10^6$, No Space Transformation.}\\ \hline
Algorithms  & ESS(SD)   & Median ESS  & ESS/Lik.Eval.  &  ESS/min          \\
ZZ 	        & 54925.7 (3113.5)      & 55460.3            & $0.1 \cdot 10^{-3}$	      &	66.2	  \\
SUZZ(0)	    & 80123.7 (1067.4)      & 80015.9      & $2.2 \cdot 10^{-3}$	      & 1600.3      \\
SUZZ(1)	        & {\bf 92356.6} (1459.1)      & {\bf 92214.6}    & ${\bf 4.3 \cdot 10^{-3}}$    & {\bf 3073.8}            \\
TRWM 	        & 2162.5 (56.7)      & 2162.6    & $2.2 \cdot 10^{-3}$   & 1544.7      \\
\end{tabular}
\caption{\vspace{1 mm} {\it SUZZ, ZZ and TRWM algorithms targeting a twenty-dimensional Sub-exponential($0.5$) distribution, with density given by (\ref{subexponential.20:1}). For SUZZ($k$), we use the speed function as in (\ref{speed.d20:1}). The algorithms ran until $N=10^6$ switches (or steps for the TRWM) occurred and the average ESS (with standard deviation in a parenthesis) along with the median ESS are presented. The median ESS per average likelihood evaluations and per average minutes of implementation time are also presented. All ESS concern the first coordinate of the twenty-dimensional process and are calculated without transforming the sample. The best performance is highlighted with bold letters.}}\label{numerics.Sub05.mean:1}
\end{table}

\begin{table}
\footnotesize
\begin{tabular}{l l l l l l l l l}
\multicolumn{7}{c}{20 Dimensional Student($3$), Number of Switches $N=10^6$, With Space Transformation.}\\ \hline
Algorithms  & ESS(SD)   & Median ESS       & Sq. $0.9$        & Sq. $0.99$            & Sq. $0.999$ & ESS/Lik.Eval.  &  ESS/min          \\
ZZ 	        & 16095.0 (717.8)      & 16151.2	      &	0.8980	          & 0.9888				 &  {\bf 0.9989} & $0.3 \cdot 10^{-3}$    & 734.1   \\
SUZZ(0)	    & {\bf 25882.6} (421.6)      & {\bf 25943.5}   	      & 0.8978            & 0.9892   & 0.9986      	     & ${\bf 1.4 \cdot 10^{-3}}$	&     1005.6      \\
SUZZ(1)	        & 23002.8 (511.0)      & 23052.3                &	0.8994	          &	{\bf 0.9898}				 & {\bf 0.9989}	    & $1.1 \cdot 10^{-3}$   & {\bf 1746.4}    \\
TRWM 	        & 1153.3 (80.6)      & 1158.5                 &	{\bf 0.8996}	          & {\bf 0.9902}				 & {\bf 0.9991}	    & $1.2 \cdot 10^{-3}$ & 827.5    \\
\end{tabular}
\caption{\vspace{1 mm} {\it SUZZ, ZZ and TRWM algorithms targeting a twenty-dimensional Student($3$) distribution with scale matrix given by (\ref{scale.matrix.student.3:1}). For SUZZ($k$), we use the speed function as in (\ref{speed.d20:1}). The algorithms ran until $N=10^6$ switches (or steps for the TRWM) occurred and the average ESS (with standard deviation in a parenthesis) along with the median ESS are presented. The median ESS per average likelihood evaluations and per average minutes of implementation time are also presented. All ESS concern the first coordinate of the twenty-dimensional process and are calculated after we transform the sample via the function $f$ as in (\ref{simulations.transformation:1}). An estimation of probabilities assigned to various squares of $\mathbb{R}^{20}$ by the target distribution is also presented. The square denoted by "Sq $a$" means that target assigns probability $a$ inside the square. The best performance is highlighted with bold letters.}}\label{numerics.20d.Student.3.pos.corr.fixed.N.SUZZ:1}
\end{table}

Finally, we should note that we tested SUZZ algorithms on targets where Assumption \ref{non.explo.condition:0} is not verified and the process will a.s.\ explode in finite time. Specifically, we targeted a one-dimensional Cauchy $\pi(x)=\frac{1}{Z}\left( 1+x^2 \right)^{-1}$
with SUZZ($1$), i.e. speed function $s(x)=1+x^2$. Very quickly there were numerical issues, with MATLAB reporting NaN. A diagnostic test we propose for one to check possible explosivity is to construct a large square (for example $[-10^8,10^8]^d$ for a $d$-dimensional target) and change the process so that whenever it hits the boundary of that square, a switch of direction occurs. Meanwhile one can count the number of times the process hit the boundary of the square. If the proportion of direction switches due to hitting the boundary over the overall number of switches is large, there is a good chance that the algorithm explodes and should not be used.

\section*{Acknowledgements}
We would like to thank Professor Anthony Lee for the indication of a simpler proof of Theorem \ref{non.geometric.ergodicity:0}. We would also like to thank George Deligiannidis and Krzysztof Latuszynski for helpful discussions. Finally, we would like to thank the associate editor and all five anonymous referees for their comments that vastly improved the quality of this manuscript.

G. Vasdekis was supported by the EPSRC as part of the MASDOC DTC (EP/HO23364/1) and the Department of Statistics at the University of Warwick (EP/N509796/1). G. O. Roberts was supported by EPSRC under the CoSInES (EP/R018561/1) and Bayes for Health (EP/R034710/1) programmes.


\bibliographystyle{plain} 
\bibliography{Biblio}       

\begin{appendix}

\section{A formal construction of the SUZZ process}\label{formal.cosntruction:00}
Formally, the SUZZ process is constructed as follows. 

Let $(\tilde{E}_n)_{ n \in \mathbb{N}}$ be i.i.d. exponential random variables with mean 1 and $(u_n)_{ n \in \mathbb{N}}$ be i.i.d. uniform in $[0,1]$ random variables, independent of the $\tilde{E}_n$'s. Suppose that the process starts from $(x,\theta) \in E$. Let $\{ \Phi_t(x,\theta) \}_{t \geq 0}$ be the flow on $\mathbb{R}^d$ that solves the ODE system 

\begin{equation}\label{det.dyn:001}
\left\{\begin{array}{l}
\dfrac{d\Phi_t(x,\theta)}{dt}=\theta s(\Phi_t(x,\theta)) , \ t \in [0,t^*(x,\theta))\ \vspace{1.5mm}\\ 
\Phi_0(x,\theta)=x,
\end{array}\right.
\end{equation}
where $t^*(x,\theta)=\sup \{ t \geq 0 : \Phi_t(x,\theta) \in \mathbb{R}^d \}$ is the explosion time of the flow, with the convention that if the flow does not explode this is set $t^*(x,\theta)=+\infty$. As we will see in Appendix \ref{ode.solution:00}, if $s \in C^1$, the ODE system (\ref{det.dyn:001}) has a unique solution, and this solution flow moves in a straight line parallel to $\theta \in \{ \pm 1 \}^d$. We define for all $(y,\eta) \in E$
\begin{equation}\label{def.lam:1}
\lambda(y,\eta)=\sum_{i=1}^d\lambda_i(y,\eta).
\end{equation}
Let 
\begin{equation*}
\tau_1=\inf \{ t \in [0,t^*(x,\theta)) : \int_0^t\lambda(\Phi_u(x,\theta),\theta)du  \geq \tilde{E}_1 \},
\end{equation*}
where we will always use the convention that $\inf \emptyset = +\infty$. Let 
$T_1=\tau_1$ 
.

If $\tau_1=\infty$ we set $(X_t,\Theta_t)=(\Phi_t(x,\theta),\theta)$ for all $t < t^*(x,\theta)$ and $(X_t,\Theta_t)=\partial$ for $t \geq t^*(x,\theta)$.

If $\tau_1<\infty$, we define $i_1$ to be the a.s.\ unique $i \in \{ 1,...,d \}$ that satisfies 
\begin{equation*}
u_1 \in \left[ \frac{\sum_{k=1}^{i-1}\lambda_k(\Phi_{\tau_1}(x,\theta),\theta)}{\lambda(\Phi_{\tau_1}(x,\theta),\theta)},\frac{\sum_{k=1}^{i}\lambda_k(\Phi_{\tau_1}(x,\theta),\theta)}{\lambda(\Phi_{\tau_1}(x,\theta),\theta)} \right] . 
\end{equation*}

We set $(X_t,\Theta_t)=(\Phi_t(x,\theta),\theta)$ for all $0 \leq t < T_1$. Then set $(X_{T_1},\Theta_{T_1})=(\Phi_{\tau_1}(x,\theta),F_{i_1}(\theta))$, where $F_{i_1}(\theta) \in \{ \pm 1 \}^d$ as in (\ref{multi.zz.flip.ssymbol:1}).

We then continue the construction inductively, for any $n \in \mathbb{N}$. If $T_n<\infty$ and assuming that the process is constructed until time $T_n$, we then consider the flow $ \{ \Phi_t\left(X_{T_n},\Theta_{T_n}\right) \}_{ t \geq 0}$, we let
 \begin{equation}\label{def.taun:1}
\tau_{n+1}=\inf \{ t \in [0,t^*(X_{T_n},\Theta_{T_n})) : \int_0^t\lambda(\Phi_u(X_{T_n},\Theta_{T_n}),\Theta_{T_n}) \ du  \geq \tilde{E}_{n+1} \}.
\end{equation}
and set $T_{n+1}=T_n+\tau_{n+1}$.


If $\tau_{n+1}=\infty$, we set $(X_{T_n+t},\Theta_{T_n+t})=(\Phi_t(X_{T_n},\Theta_{T_n}),\Theta_{T_n})$ for all $t < t^*(X_{T_n},\Theta_{T_n})$ and $(X_{T_n+t},\Theta_{T_n+t})=\partial$ for $t \geq t^*(X_{T_n},\Theta_{T_n})$.

If $\tau_{n+1}<\infty$, we define $i_{n+1}$ to be the a.s.\ unique $i \in \{ 1,...,d \}$ that satisfies 
\begin{equation*}
u_{n+1} \in \left[ \frac{\sum_{k=1}^{i-1}\lambda_k(\Phi_{\tau_{n+1}}(X_{T_n},\Theta_{T_n}),\Theta_{T_n})}{\lambda(\Phi_{\tau_{n+1}}(X_{T_n},\Theta_{T_n}),\Theta_{T_n})},\frac{\sum_{k=1}^{i}\lambda_k(\Phi_{\tau_{n+1}}(X_{T_n},\Theta_{T_n}),\Theta_{T_n})}{\lambda(\Phi_{\tau_{n+1}}(X_{T_n},\Theta_{T_n}),\Theta_{T_n})} \right] . 
\end{equation*} 
Then set $(X_{T_n+t},\Theta_{T_n+t})=(\Phi_t(X_{T_n},\Theta_{T_n}),\Theta_{T_n})$ for all $0 \leq t < T_{n+1}-T_n$. Then set $\left(X_{T_{n+1}},\Theta_{T_{n+1}}\right)=\left(\Phi_{\tau_{n+1}}(X_{T_n},\Theta_{T_n}),F_{i_{n+1}}\left(\Theta_{T_n}\right)\right)$.

This defines the process until time 
$\xi$ as in (\ref{definition.of.xi:00}).
In the case where $\xi < \infty$, $\xi$ is the first time that the process has had infinitely many switches of direction. We set $(X_t,\Theta_t)=\partial$ for all $t \geq \xi$. 

\section{Solution of ODE (\ref{suzz.heuristic.ode:01})}\label{ode.solution:00}
Here we explain why the ODE system (\ref{suzz.heuristic.ode:01}) has a unique solution when $s \in C^1$. Note that the solution flow $\{ \Phi_t(x,\theta) , t \geq 0\}$, representing the solution after time $t$ when starting from $(x,\theta)$, must move in a straight line in $\mathbb{R}^d$, parallel to $\theta \in \{ \pm 1 \}^d$. Therefore, if the process starts from $(x,\theta)=(x_1,...,x_d;\theta_1,...,\theta_d)$ , then at time $t$ is has position $\Phi_t(x,\theta)=(X^1(t),...,X^d(t))$ that satisfies for all $i$,
\begin{equation}\label{ode.solution:1}
    X^i(t)=x_i+\theta_1 \theta_i \left( X^1(t)-x_1\right)=y_i+\theta_1 \theta_i X^1(t)
\end{equation}
where
\begin{equation*}
    y_i=x_i-\theta_1 \theta_i x_1
\end{equation*}
and where we omit including the dependence of $X^i(t)$ on $(x,\theta)$ for notation convenience. 

Therefore, as long as we solve $\{ X^1(t), t \geq 0 \}$, we will have completely identified the solution $\{ \Phi_t(x,\theta) , t \geq 0 \}$. Also, from (\ref{suzz.heuristic.ode:01}) and (\ref{ode.solution:1})  we get that $X^1(t)$ satisfies
\begin{equation*}
\frac{d X^1(t)}{dt}=\theta_1 s\left( X^1(t),y_2+\theta_1\theta_2 X^1(t),...,y_d+\theta_1\theta_dX^1(t)\right)
\end{equation*}
Separating the variables we get that $X^1(t)$ satisfies
\begin{equation*}
\theta_1t=\int_{x_1}^{X^1_t}\frac{1}{s\left( u,y_2+\theta_1\theta_2u,...,y_d+\theta_1 \theta_d u \right) }du,
\end{equation*}
which has a unique solution
\begin{equation}
X^1(t)=f^{-1}(f(x_1)+\theta_1 t),
\end{equation} 
where
\begin{equation*}
   f(a)=\int_0^a\frac{1}{s\left( u,y_2+\theta_1\theta_2u,...,y_d+\theta_1\theta_du \right)} du
\end{equation*}
is invertible since it is continuous and strictly increasing. The solution is defined until the explosion time
\begin{equation}\label{explosion.time.formula:101}
   t^*(x,\theta)=\theta_1 \int_{x_1}^{\theta_1 \cdot \infty} \frac{1}{s\left( u,y_2+\theta_1\theta_2u,...,y_d+\theta_1\theta_du \right)} du \in (0,+\infty],
\end{equation}
where we use the convention $\theta_1 \cdot \infty$ to denote $+\infty$ if $\theta_1=+1$ and $-\infty$ if $\theta_1=-1$.

A natural family of speed functions, for which the ODE (\ref{suzz.heuristic.ode:01}) can be solved analytically is 
\begin{equation*}
    s(x)=\left(1+x^TBx\right)^{\frac{1+n}{2}}
\end{equation*}
where $n=0,1,2,3,...$ and $B$ a positive definite matrix. For $n=0$, the ODE solution is non-explosive for all starting value $(x,\theta) \in \mathbb{R}^d \times \{ -1,+1 \}^d$, while for $n>0$, the solution explodes in finite time. Here we will present two specific examples of speed functions that are used throughout the numerical simulations, along with the solution to the ODE they induce. More details can be found in Section 5.8.2 of \cite{vasdekis:20}.

\begin{itemize}
    \item {\bf Non-explosive deterministic dynamics:} A natural speed function to use is 
    \begin{equation*}
        s(x)=\left( 1+x^Tx \right)^{1/2}.
    \end{equation*}
    The solution to the ODE (\ref{suzz.heuristic.ode:01}) starting from $(x,\theta) \in E$, is given by (\ref{ode.solution:1}), where
    \begin{equation*}
        X^1(t)=\frac{b^2(t)-a}{2b(t)}-\frac{c}{d},  \  t \in [0,+\infty),
    \end{equation*}
    and
    \begin{align*}
        &b(t)=\left(Y_0+\sqrt{Y_0^2+a}\right) \cdot \exp \{ \sqrt{d}\theta_1 t \}, \ Y_0=x_1+\frac{c}{d}, \\
       & a=\frac{1+\| y \|_2^2}{d}-\frac{c^2}{d^2}, \ c=\theta_1 (y \cdot \theta), \  y_i=x_i-\theta_1 \theta_i x_1.
    \end{align*}
    
    \item {\bf Explosive deterministic dynamics:} We use the speed function
    \begin{equation*}
        s(x)=\left( 1+x^Tx \right).
    \end{equation*}
    The solution to the ODE (\ref{suzz.heuristic.ode:01}) starting from $(x,\theta) \in E$, is given by (\ref{ode.solution:1}), where
    \begin{equation*}
        X^1(t)=-c_1+c_2\tan{\left( \arctan{\left( \frac{x_1+c_1}{c_2}\right)}+\theta_1c_2d\ t \right)}, \ t \in [0,t^*(x,\theta)),
    \end{equation*}
    and
    \begin{equation*}
        c_2=\sqrt{\frac{1+y_2^2+...+y_d^2}{d}-c_1^2}, \ c_1=\frac{\left( y \cdot \theta \right)}{d}\theta_1, \ y_i=x_i-\theta_1 \theta_i x_1,
    \end{equation*}
and where the explosion time is
    \begin{equation*}
        t^*(x,\theta)=\frac{ \frac{\pi}{2}-\theta_1\arctan{\left( \frac{x_1+c_1}{c_2}\right)} }{c_2d}.
    \end{equation*}

\end{itemize}

\section{The generator}

\begin{proposition}\label{generator:1}
Let $(Z_t)_{t \geq 0}=(X_t,\Theta_t)_{t \geq 0}$ be a SUZZ process with strictly positive speed function $s \in C^2$. For any function $f \in C^1(E)$ and any $(x,\theta) \in E$,
\begin{equation}\label{generator.speed.up:1}
\lim_{t \rightarrow 0}\dfrac{\mathbb{E}_{x,\theta}[f(Z_t)]-f(x,\theta)}{t}=\mathcal{L}f(x,\theta):=\sum_{i=1}^d  \theta_i s(x) \partial_i f(x,\theta) + \lambda_i(x,\theta)(f(x,F_i(\theta))-f(x,\theta)) .
\end{equation}
\end{proposition}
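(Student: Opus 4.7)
The plan is to follow the standard route for generators of PDMPs: decompose the expectation $\mathbb{E}_{x,\theta}[f(Z_t)]$ according to how many switches have occurred by time $t$, and show that for small $t$ only the zero-switch and one-switch contributions matter to first order in $t$. Concretely, I would write
\begin{equation*}
\mathbb{E}_{x,\theta}[f(Z_t)] = A_0(t) + A_1(t) + A_{\geq 2}(t),
\end{equation*}
where $A_k(t)$ is the expectation restricted to the event $\{k$ switches in $[0,t]\}$. First I would argue $A_{\geq 2}(t) = O(t^2)$: since $s \in C^2$ and $\lambda_i$ are continuous, the total rate $\lambda(\Phi_{(x,\theta)}(u),\theta)$ is bounded by some constant $C=C(x,\theta)$ on $[0,t_0]$ for some small $t_0>0$, so the probability of at least two arrivals in a Poisson-type construction with rate $\leq C$ is at most $(Ct)^2/2$; combined with the local boundedness of $f$ on a neighbourhood of the deterministic piece, this gives $|A_{\geq 2}(t)|=O(t^2)$.

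Next, the no-switch contribution is
\begin{equation*}
A_0(t) = f(\Phi_{(x,\theta)}(t),\theta)\,\exp\!\left\{-\int_0^t \lambda(\Phi_{(x,\theta)}(u),\theta)\,du\right\}.
\end{equation*}
Using $\frac{d}{dt}\Phi_{(x,\theta)}(t)\big|_{t=0}=\theta s(x)$ and $f\in C^1$, Taylor expansion gives $f(\Phi_{(x,\theta)}(t),\theta)= f(x,\theta)+t\sum_i \theta_i s(x)\partial_i f(x,\theta)+o(t)$, while the survival factor expands as $1-t\lambda(x,\theta)+o(t)$ by continuity of $\lambda$ at $(x,\theta)$. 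Multiplying and keeping first-order terms yields
\begin{equation*}
A_0(t) = f(x,\theta) + t\Big[\sum_i \theta_i s(x)\partial_i f(x,\theta) - \lambda(x,\theta) f(x,\theta)\Big] + o(t).
\end{equation*}

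For the one-switch contribution I would condition on the time $\tau\in[0,t]$ of the first switch and on the switching coordinate $i$. By the construction in the paper the density of $(\tau, i_1=i)$ is $\lambda_i(\Phi_{(x,\theta)}(\tau),\theta)\exp\{-\int_0^{\tau}\lambda(\Phi_{(x,\theta)}(u),\theta)\,du\}$, and conditional on a single switch, the value $Z_t$ is $(\Phi_{(\Phi_{(x,\theta)}(\tau),F_i[\theta])}(t-\tau), F_i[\theta])$. Since $f$ and $\Phi$ are continuous and $\tau\leq t\to 0$, the integrand converges to $\lambda_i(x,\theta) f(x,F_i[\theta])$, yielding
\begin{equation*}
A_1(t) = t\sum_i \lambda_i(x,\theta)f(x,F_i[\theta]) + o(t).
\end{equation*}

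Summing $A_0+A_1+A_{\geq 2}$, subtracting $f(x,\theta)$, dividing by $t$ and letting $t\downarrow 0$ gives $\mathcal{L}f(x,\theta)$ as stated, once we note $\lambda=\sum_i\lambda_i$ cancels the $-\lambda(x,\theta)f(x,\theta)$ term against $\sum_i\lambda_i(x,\theta)f(x,\theta)$ to produce the differences $f(x,F_i[\theta])-f(x,\theta)$. The main obstacle, really the only genuine worry, is controlling $A_{\geq 2}$ and justifying the $o(t)$ term in the integral representation of $A_1$ uniformly on the small flow segment: both are handled by using that $s, \lambda_i, f$ are continuous and hence locally bounded on a compact neighbourhood of $\{\Phi_{(x,\theta)}(u): u\in[0,t_0]\}$, so the possible explosion of deterministic dynamics plays no role on the small time scale $t\downarrow 0$.
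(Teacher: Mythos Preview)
Your proposal is correct and follows essentially the same standard PDMP generator computation as the paper. The only organisational difference is that you decompose by the \emph{number} of switches ($0$, $1$, $\geq 2$) and separately bound $A_{\geq 2}(t)=O(t^2)$, whereas the paper decomposes by \emph{which coordinate} is first to switch (events $S_0(t),S_1(t),\dots,S_d(t)$) and absorbs the multi-switch contribution into the conditional expectation $\mathbb{E}_{x,\theta}[f(X_t,\Theta_t)\mid S_i(t)]\to f(x,F_i[\theta])$; the paper also makes the locality argument (process stays in a fixed compact set for small $t$, hence rates and $f$ are bounded along the actual random path, not just the unswitched flow) a separate lemma, which is exactly the point you flag informally at the end.
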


Before we prove Proposition \ref{generator:1} we need the following technical results. 

\begin{definition}
Let $m \in \mathbb{N}$. We denote with $O_m$ the ball in $\mathbb{R}^d$, centred around $0$, having radius $m$. Recall that $\zeta_m=\inf\{ t \geq 0 : X_t \notin O_m \}$ is the first exit time of the $O_m$ for the SUZZ process and $\zeta=\lim_{m \rightarrow \infty}\zeta_m$ is the explosion time of the process. Finally, recall that $T_1,T_2,...$ are the switching times of the process and $\xi = \lim_{ n \rightarrow \infty} T_n$.
\end{definition}
\begin{lemma}\label{infinite.switches.lemma:1}
Assume that the speed function $s \in C^1$ is strictly positive and the rate functions $\lambda_i$ are locally bounded for all $i$. Then almost surely $\zeta_m < \xi$ for all $m \in \mathbb{N}$. Therefore, a.s.\ $\zeta \leq \xi$.
\end{lemma}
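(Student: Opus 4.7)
Plan: The main idea is that as long as the process remains inside $O_m$, the total switching rate $\lambda := \sum_{i=1}^d \lambda_i$ is uniformly bounded, which prevents the switches from accumulating before the process exits $O_m$. First, using the local boundedness of the $\lambda_i$'s together with the compactness of $\bar O_m \times \{ \pm 1 \}^d$, I would set
\begin{equation*}
\Lambda_m := \sup_{(x,\theta) \in \bar O_m \times \{ \pm 1 \}^d} \lambda(x,\theta) < \infty.
\end{equation*}

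Next, I would work directly with the pathwise construction given before the lemma, in terms of the i.i.d.\ unit exponentials $\{ E_n \}_{n \in \mathbb{N}}$. The key observation is that whenever $T_n + \tau_{n+1} \leq \zeta_m$, the flow $\Phi_{(X_{T_n},\Theta_{T_n})}(\cdot)$ stays in $\bar O_m$ throughout $[0, \tau_{n+1}]$, so the defining identity
\begin{equation*}
E_{n+1} = \int_0^{\tau_{n+1}} \lambda\!\left(\Phi_{(X_{T_n},\Theta_{T_n})}(u), \Theta_{T_n}\right) du \leq \Lambda_m \tau_{n+1}
\end{equation*}
immediately yields $\tau_{n+1} \geq E_{n+1}/\Lambda_m$. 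Iterating, on the event $A_m := \{ T_n < \zeta_m \text{ for all } n \in \mathbb{N} \}$ I obtain $T_n \geq \Lambda_m^{-1} \sum_{k=1}^n E_k$ for every $n$, and the strong law of large numbers forces $T_n \to +\infty$ almost surely on $A_m$. Hence $\xi = +\infty$ on $A_m$, and in fact $\zeta_m = +\infty$ there as well.

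On the complementary event $A_m^c$ there is a smallest $n$ with $T_n \geq \zeta_m$; since the process still makes at least one further switch or stops switching altogether (with $T_{n+1}=\infty$), in either case $T_n < \xi = \lim_k T_k$, giving the strict chain $\zeta_m \leq T_n < \xi$. Combining the two cases yields $\zeta_m < \xi$ almost surely in the non-trivial case and establishes that Zeno-type accumulation of switches cannot occur within $O_m$. Monotonicity of $(\zeta_m)$ and $\zeta = \lim_m \zeta_m$ then give $\zeta \leq \xi$; the strict inequality in the conclusion follows because once $\xi$ is finite, each $\zeta_m$ is strictly smaller by the bound just established, and letting $m \to \infty$ preserves this.

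The part I expect to require the most care is the borderline case in which the flow meets $\partial O_m$ exactly at a switching instant $T_n$; this is a null event because the $E_n$ have continuous distribution and $\Phi$ is a smooth curve, but the argument has to be framed so that the bound $\tau_{n+1} \geq E_{n+1}/\Lambda_m$ and the identification of the exit-time $\zeta_m$ are consistent pathwise. Otherwise the argument is essentially a standard domination by a Poisson process with intensity $\Lambda_m$ on a bounded set.
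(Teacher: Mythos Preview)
Your domination argument is correct and matches the paper's first step: bounding the total rate by $\Lambda_m$ on $\bar O_m$ and using $\sum_{k \leq n} E_k \leq \Lambda_m T_n$ to rule out accumulation of switches while the process is inside $O_m$. However, there is a genuine gap. On the event $A_m = \{T_n < \zeta_m \text{ for all } n\}$ you correctly deduce $\xi = \zeta_m = +\infty$, but this yields $\zeta_m = \xi$, not $\zeta_m < \xi$. Your phrase ``in the non-trivial case'' does not dispose of $A_m$; you must show $\mathbb{P}(A_m) = 0$. The paper supplies the missing ingredient: since $s$ is continuous and strictly positive on the compact set $\bar O_m$, the deterministic flow started anywhere in $O_m$ exits $O_m$ within a uniform time $t_m < \infty$. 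Hence, whenever $E_{n} > \Lambda_m t_m$ and the process is still in $O_m$ at time $T_{n-1}$, it escapes before the $n$th switch; since $\mathbb{P}(E_n > \Lambda_m t_m) = a > 0$ and the $E_n$ are i.i.d., one gets $\mathbb{P}(\zeta_m = +\infty) = 0$. This is precisely where the hypothesis $s > 0$ is used; your argument invokes only local boundedness of $\lambda$ and would go through verbatim for a process with rest points, which could legitimately remain in $O_m$ forever.

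Two smaller points. In your handling of $A_m^c$, if the minimal $n$ with $T_n \geq \zeta_m$ has $T_n = \infty$, then $T_n = \xi = \infty$ and the chain $\zeta_m \leq T_n < \xi$ fails as written; the exit-time bound again rescues this, since the flow from $X_{T_{n-1}}$ leaves $O_m$ by time $T_{n-1} + t_m < \infty = \xi$. And your final claim that strict inequality survives the limit $m \to \infty$ is unjustified: from $\zeta_m < \xi$ for all $m$ you obtain only $\zeta \leq \xi$, which is in fact all that the subsequent non-explosion argument needs.
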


\begin{proof}[Proof of Lemma \ref{infinite.switches.lemma:1}]
Let $\lambda$ be as in (\ref{def.lam:1}) and let $m \in \mathbb{N}$. Let $\bar{\lambda}$ be an upper bound of $\lambda$ on $O_m$ and let $\tilde{E}_1,\tilde{E}_2,...$ be a configuration of the i.i.d. exponential random variables, with expectation 1, used to construct the $(X_t,\Theta_t)_{ t \geq 0}$ process, such that $\xi \leq \zeta_m$. We will show that the event of such a configuration has probability zero. In that configuration, for all $t < \xi$, we have $t < \zeta_m$, therefore $X_t \in O_m$. By the definition of the switching times $T_k$ in (\ref{def.taun:1}) (and writing $T_0=0$) we get
\begin{equation*}
\sum_{k=1}^{\infty}\tilde{E}_k=\sum_{k=1}^{\infty}\int_{T_{k-1}}^{T_k}\lambda (X_t,\Theta_t)dt \leq \xi \bar{\lambda}.
\end{equation*}
and therefore 
\begin{equation*}
\mathbb{P}\left( \{ \xi < \infty \} \cap \{ \xi \leq \zeta_m \} \right) \leq \mathbb{P}\left( \sum_{k=1}^{\infty}\tilde{E}_k \leq \xi \bar{\lambda}<\infty \right) =0.
\end{equation*}
Let $t_m$ be the maximum time it takes for a flow that starts from inside $O_m$ and solves (\ref{suzz.heuristic.ode:01}) to exit $O_m$. For any $n$, on the event $\{ \tilde{E}_n \geq \bar{\lambda}t_m  \}$, if the process has not escaped the ball $O_m$ until the $n-1$th switch, it does so following the dynamics before the $n$th switch occurs. Since $\mathbb{P} \left( \tilde{E}_n \geq \bar{\lambda}t_m  \right) =a>0$ we have for all $n$, $\mathbb{P}\left( \zeta_m > n \right) \leq (1-a)^n$ and therefore $\mathbb{P}\left( \zeta_m=+\infty \right) =0.$

Overall this gives, \begin{equation*}
    \mathbb{P}\left( \xi \leq \zeta_m \right) =\mathbb{P}\left( \left\{ \xi \leq \zeta_m \right\}  \cap \left\{ \xi < \infty \right\}\right) + \mathbb{P}\left( \left\{ \xi \leq \zeta_m \right\}  \cap \left\{ \xi = \infty \right\}\right) \leq \mathbb{P}\left(\zeta_m=\infty\right)=0,
\end{equation*}
which concludes the proof.
\end{proof}

\begin{lemma}
\label{locality.lemma:1}
If the speed function $s \in C^1$ and the rate functions are locally bounded, then for all $x \in \mathbb{R}^d$ and any neighbourhood $U_x$ of $x$ there exists a time $t>0$ such that for any $\theta \in \{ \pm 1 \}^d$ if the SUZZ starts from $(x,\theta)$, then $X_s \in U_x$ for all $0 \leq s \leq t$.
\end{lemma}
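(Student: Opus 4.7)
The plan is to exploit the fact that direction switches do not move the particle, together with the local boundedness of the speed function $s$, to get a deterministic upper bound on how far $X$ can travel in a short time. The only subtle point is to ensure the trajectory is genuinely defined on $[0,t]$, which will be handled by Lemma \ref{infinite.switches.lemma:1}.

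First I would pick $r>0$ such that $\overline{B}(x,r)$ is compact and set $M=\sup_{y\in \overline{B}(x,r)} s(y)<\infty$, finite by continuity of $s$. Between switches $X$ solves $\dot X=\theta s(X)$ with $\theta\in\{\pm 1\}^d$, and a switch only flips one component of $\theta$ without moving $X$; the trajectory is therefore continuous and piecewise $C^1$ with $\|\dot X\|_2\leq \sqrt{d}\,M$ whenever $X\in \overline{B}(x,r)$, independently of how many switches have occurred. Integrating yields
\begin{equation*}
\|X_s - x\|_2 \leq \sqrt{d}\, M\, s \qquad \text{for all } s<\sigma_r\wedge \xi,
\end{equation*}
where $\sigma_r=\inf\{s\geq 0: X_s\notin \overline{B}(x,r)\}$.

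Next I would rule out the event $\sigma_r\geq \xi$: picking $m$ with $\overline{B}(x,r)\subset O_m$, on that event one has $\zeta_m\geq \xi$, contradicting Lemma \ref{infinite.switches.lemma:1}; hence almost surely $\sigma_r<\xi$. By continuity of $X$ this forces $\|X_{\sigma_r}-x\|_2=r$, and the displacement bound above gives $\sigma_r\geq r/(\sqrt{d}\,M)$ almost surely. Setting $U_x=B(x,r)$ and $t=r/(2\sqrt{d}\,M)$ then finishes the proof: for $s\in[0,t]\subseteq [0,\sigma_r)$ one has $\|X_s-x\|_2\leq \sqrt{d}\,M\,s\leq r/2<r$, so $X_s\in U_x$. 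The one potentially delicate issue — that infinitely many direction switches could occur before the process has a chance to leave any neighbourhood — is precisely what Lemma \ref{infinite.switches.lemma:1} prevents, so I do not anticipate a serious obstacle.
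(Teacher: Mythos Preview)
Your proof is correct and follows essentially the same approach as the paper: bound the speed on a neighbourhood by continuity of $s$, use the resulting $\sqrt{d}\,M$ bound on $\|\dot X\|_2$ to control displacement, and invoke Lemma \ref{infinite.switches.lemma:1} to rule out infinitely many switches before exiting the neighbourhood. Your write-up is in fact slightly more careful than the paper's (you make the exit-time argument explicit); the only cosmetic point is that the line ``$\|X_{\sigma_r}-x\|_2=r$'' presumes $\sigma_r<\infty$, but the case $\sigma_r=+\infty$ trivially gives $\sigma_r\geq r/(\sqrt{d}\,M)$ anyway, so nothing is lost.
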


\begin{proof}[Proof of Lemma \ref{locality.lemma:1}]
Let $x \in \mathbb{R}^d$ and consider a small neighbourhood $U_x$ of $x$. Let $\bar{s}$ be an upper bound for $s$ on $U_x$. Take $t$ small enough so that $t \sqrt{d} \bar{s} < dist(x,\partial U_x)$, where $dist$ denotes the Euclidean distance between a point and a set. This way, any path starting from $x$, and following a straight line, with speed function $s$ in each coordinate, for time less than $t$, will not have exit $U_x$. Then any path moving in directions $\{ \pm 1 \}^d$, with speed function $s(x)$ in each component, that switches direction finitely many times, will not have exit $U_x$. From Lemma \ref{infinite.switches.lemma:1}, a.s.\ the original Zig-Zag process will switch direction finitely many times until it exits the bounded set $U_x$ and this proves that the process a.s.\ stays inside $U_x$ until time $t$. 
\end{proof}

\begin{proof}[Proof of Proposition \ref{generator:1}]
Fix a starting point $(x,\theta) \in \mathbb{R}^d$. We know from Lemma \ref{locality.lemma:1} that for some neighbourhood $U_x$ of $x$ and for small $t_0$, if the process starts from $(x,\theta)$, then a.s. $X_s \in U_x$ for all $s \in [0,t_0]$. Therefore, the quantity $\mathbb{E}_{x,\theta}[f(Z_s)]$ is 
well-defined for all $s \in [0,t_0]$ so the limit makes sense. For the rest of the proof we will always assume that $t \leq t_0$.\\
Write 
 $S_i(t)=\{ \text{the } i \text{ coordinate switches before time } t \text{ and is the first coordinate to switch}\}$, for $i=1,..,d$ and 
$S_0(t)=\{ \text{ no coordinate switches until time } t \}$.
Note that if the process starts from $(x,\theta)$ and if $T_i$ is the first arrival time of the Poisson process with intensity $t \rightarrow \lambda_i(\Phi_t(x,\theta),\theta)$, then $\mathbb{P}\left( T_i \geq t \right)=\exp\left\{ -\int_0^t\lambda_i(\Phi_u(x,\theta),\theta) du \right\}$ therefore the density of $T_i$ is
\begin{equation*}
f_{T_i}(t)=\lambda_i(\Phi_t(x,\theta),\theta)\exp\left\{ -\int_0^t\lambda_i(\Phi_u(x,\theta),\theta)\ du  \right\}.
\end{equation*} 

For $f \in C^1(E)$, conditioning on the coordinate which was the first to switch before $t$ (or whether no switch occurred), we write
\begin{align}\label{eq.prop.b1:1}
&\dfrac{\mathbb{E}_{x,\theta}[f(X_t,\Theta_t)]-f(x,\theta)}{t} \\
&=\mathbb{P}_{x,\theta}(S_0(t))\dfrac{f(\Phi_t(x,\theta),\theta)-f(x,\theta)}{t}+ \sum_{i=1}^d \frac{\mathbb{E}_{x,\theta}\left[ \left(f(X_t,\Theta_t)-f(x,\theta) \right) 1_{S_i(t)} \right] }{t}. \nonumber
\end{align}

If we write $\lambda(x,\theta)=\sum_{i=1}^d\lambda_i(x,\theta)$ then we observe that
\begin{equation*}
\mathbb{P}_{x,\theta} \left( S_0(t) \right)=\exp \left\{ -\int_0^t\lambda(\Phi_u(x,\theta),\theta) du \right\} \xrightarrow{t \rightarrow 0}1.  
\end{equation*}
therefore

\begin{equation}\label{eq.prop.b1:2}
    \lim_{t \rightarrow 0}\mathbb{P}_{x,\theta}(S_0(t))\dfrac{f(\Phi_t(x,\theta),\theta)-f(x,\theta)}{t}=\sum_{i=1}^d\theta_is(x)\partial_if(x,\theta).
\end{equation}

Furthermore, for all $i \in \{ 1,...,d  \}$, conditioning on the first switch occurring at time $u \leq t$ and being of the $i$th coordinate, we have

\begin{align}\label{eq.prop.b1:3}
&\mathbb{E}_{x,\theta}\left[\left( f(X_t,\Theta_t) - f(x,\theta) \right) 1_{S_i(t)} \right] \\
&=\int_0^t\mathbb{E}_{\Phi_u(x,\theta),F_i(\theta)}\left[f(X_{t-u},\Theta_{t-u})-f(x,\theta)\right] \mathbb{P}_{x,\theta}\left( \text{no switches until time } u \text{ for any component } j \neq i  \right) \nonumber \\
& \ \ \ \ \ \ \ \ \ \ \cdot f_{T_i}(u)du \nonumber \\
&=\int_0^t\mathbb{E}_{\Phi_u(x,\theta),F_i(\theta)}[f(X_{t-u},\Theta_{t-u})-f(x,\theta)] \lambda_i(\Phi_u(x,\theta),\theta)\exp\left\{ -\int_0^u \lambda(\Phi_{u'}(x,\theta),\theta) du' \right\}du \nonumber \\
&=\int_0^t\mathbb{E}_{\Phi_u(x,\theta),F_i(\theta)}\left[f(X_{t-u},\Theta_{t-u})-f(x,F_i(\theta))\right] \lambda_i(\Phi_u(x,\theta),\theta)\exp\left\{ -\int_0^u \lambda(\Phi_{u'}(x,\theta),\theta) du' \right\}du \nonumber \\
&+\int_0^t\left(f(x,F_i(\theta))-f(x,\theta)\right) \lambda_i(\Phi_u(x,\theta),\theta)\exp\left\{ -\int_0^u \lambda(\Phi_{u'}(x,\theta),\theta) du' \right\}du. \nonumber 
\end{align}

Let $\epsilon >0$. Since $f \in C^0$ we can assume that the neighborhood $U_x$ is small enough such that if $y \in U_x$, then $|f(y,F_i\left(  \theta \right))-f(x,F_i\left(  \theta \right))| < \frac{\epsilon}{4}$. From Lemma \ref{locality.lemma:1} we know that if the process starts from $(x,\theta)$, then for any path that switches direction finitely many times by time $t$, we have for all $u' \leq t$, $X_{u'} \in U_x$. Therefore, for any $u \leq t$, if the process starts from $\left(\Phi_u\left(x,\theta \right),F_i(\theta)\right)$, then $X_{t-u} \in U_x$ a.s. Let $M$ be such that for any $y \in U_x$ and  $\eta \in \{ -1,+1 \}^d$, $|f(y,\eta)| \leq M$. Then, for any $u \leq t$, 

\begin{align*}
&\mathbb{E}_{\Phi_u(x,\theta),F_i(\theta)}\left[\left| f(X_{t-u},\Theta_{t-u})-f(x,F_i(\theta)) \right| \right] \\
&\leq \mathbb{P}_{\Phi_u(x,\theta),F_i(\theta)}\left(  \text{no switch by time } t-u \right)\left| f(\Phi_{t-u}\left(\Phi_u(x,\theta),F_i\left( \theta \right) \right), F_i\left( \theta \right) )-f(x,F_i(\theta)) \right| \\
&+ \mathbb{P}_{\Phi_u(x,\theta),F_i(\theta)}\left( \text{switch occurs by time } t-u \right) 2M
 <\frac{\epsilon}{4}+\frac{\epsilon}{4}=\frac{\epsilon}{2},
\end{align*}
when $t$ is small enough since $\lambda$ is bounded on $U_x$.
Therefore, since $\lambda_i \in C^0$ for all $i$ and $\Phi_u(x,\theta)$ is continuous over $u$, we get
\begin{align}\label{eq.prop.b1:4}
&\frac{1}{t}\left| \int_0^t\mathbb{E}_{\Phi_u(x,\theta),F_i(\theta)}\left[ f(X_{t-u},\Theta_{t-u})-f(x,F_i(\theta))\right] \lambda_i(\Phi_u(x,\theta),\theta)\exp\left\{ -\int_0^u \lambda(\Phi_{u'}(x,\theta),\theta) du' \right\}du \right| \\
& \leq \epsilon \lambda_i(x,\theta). \nonumber
\end{align}
Furthermore, from the fundamental Theorem of calculus, 
\begin{align}\label{eq.prop.b1:5}
&\frac{1}{t}\int_0^t\left(f(x,F_i(\theta))-f(x,\theta)\right) \lambda_i(\Phi_u(x,\theta),\theta)\exp\left\{ -\int_0^u \lambda(\Phi_{u'}(x,\theta),\theta) du' \right\}du \\
&\xrightarrow{t \rightarrow 0} \left( f(x,F_i(\theta)) -f(x,\theta) \right) \lambda_i(x,\theta). \nonumber
\end{align}

Combining (\ref{eq.prop.b1:3}), (\ref{eq.prop.b1:4}) and (\ref{eq.prop.b1:5}) we get that
\begin{equation*}
\lim_{t \rightarrow 0}\frac{\mathbb{E}_{x,\theta}\left[\left( f(X_t,\Theta_t) - f(x,\theta) \right) 1_{S_i(t)} \right]}{t}=\lambda_i(x,\theta)\left( f(x,F_i(\theta)) -f(x,\theta) \right)
\end{equation*}
for all $i \in \{ 1,...,d \}$, which combined with (\ref{eq.prop.b1:1}) and (\ref{eq.prop.b1:2}) proves the result.

\end{proof}

\section{Proof of Non-explosivity of SUZZ (Theorem \ref{non.explo:1})}\label{app.non.explo:00}

Before we prove non-explosivity of the process, we prove the following useful result, which is of independent interest. The result states that under Assumption \ref{non.explo.condition:0}, independently of the starting point, the process cannot follow the deterministic dynamics until the explosion time, but has to switch direction beforehand. Naturally, this is strongly connected with the notion of non-explosion and justifies the existence of Assumption \ref{non.explo.condition:0}.

\begin{proposition}\label{switch.before.boom:1}
Assume that $s \in C^2$ is strictly positive, the rates satisfy (\ref{rates.formula:2}) and Assumption \ref{non.explo.condition:0} hods. For any starting point $(x,\theta)$, let $t^*(x,\theta)$ be the explosion time of the deterministic flow solving the ODE (\ref{det.dyn:001}), and let $T_1$ be the first switching time of the process. Then $\mathbb{P}_{x,\theta}\left( T_1 < t^*(x,\theta) \right)=1$.
\end{proposition}

\begin{proof}[Proof of Proposition \ref{switch.before.boom:1}]
Suppose that the process starts from \\
$(x,\theta)=(x_1,..,x_d;\theta_1,..,\theta_d)$. 
As proven in Appendix \ref{ode.solution:00}, for any $t<T_1$ we have $X_t=(X^1_t,...,X^d_t)$ with $X^i_t$ as in (\ref{ode.solution:1}).

 Consider the Poisson process with rate $\{ m(t)=\lambda(X_t,\theta)=\sum_{i=1}^d\lambda_i(X_t,\theta), t \geq 0 \}$. If $y_i$ as in (\ref{ode.solution:1}), then using the fact that $a^+ \geq a$ for any $a \in \mathbb{R}$, and using the notation $\theta_1 \cdot \infty$ to denote $+\infty$ if $\theta_1=+1$ or $-\infty$ when $\theta_1=-1$, we get 
\begin{align*}
&\int_0^{t^*(x,\theta)}m(t)dt=\int_0^{t^*(x,\theta)}\lambda(X_t^1,y_2+\theta_1\theta_2X_t^1,...,y_d+\theta_1\theta_dX_t^1)dt\\
&=\int_{x_1}^{\theta_1 \cdot\infty}\lambda(u,y_2+\theta_1\theta_2u,..,y_d+\theta_1\theta_du)\dfrac{1}{s(u,y_2+\theta_1\theta_2u,..,y_d+\theta_1\theta_du)}\theta_1du  \\
& \geq \int_{x_1}^{\theta_1 \cdot\infty}\sum_{i=1}^d\theta_1\theta_i \partial_iU(u,y_2+\theta_1\theta_2u,..,y_d+\theta_1\theta_du)- \theta_1\theta_i\dfrac{\partial_is(u,y_2+\theta_1\theta_2u,..,y_d+\theta_1\theta_du)}{s(u,y_2+\theta_1\theta_2u,..,y_d+\theta_1\theta_du)}du\\
&=\int_{x_1}^{\theta_1 \cdot \infty}\dfrac{d}{du}[U(u,y_2+\theta_1\theta_2u,..,y_d+\theta_1\theta_du)-\log s(u,y_2+\theta_1\theta_2u,..,y_d+\theta_1\theta_du)]du\\
&=\lim_{u \rightarrow \theta_1 \cdot \infty}U(u,c_2+\theta_1\theta_2u,..,c_d+\theta_1\theta_du)-\log s(u,c_2+\theta_1\theta_2u,..,c_d+\theta_1\theta_du) - C= +\infty.
\end{align*}
by Assumption \ref{non.explo.condition:0}. Therefore, $\mathbb{P}_{x,\theta}\left( T_1 \geq t^*(x,\theta) \right)=\exp \left\{ -\int_0^{t^*(x,\theta)}m(t)dt \right\}=0$.
\end{proof}

To prove that the process will not explode, we use standard techniques from \cite{meyn.tweedie.3:93} which depend on the generator of the process. Although we can define the operator $\mathcal{L}$ in (\ref{generator.speed.up:1}), we cannot immediately conclude that this is the strong generator of the process. This is because the strong generator is defined through uniform convergence and we only defined  $\mathcal{L}$ in (\ref{generator.speed.up:1}) as a point-wise limit. This complicates the proof. However, the techniques in \cite{meyn.tweedie.3:93} only require us to use the generator of the process restricted in a bounded domain, which we introduce now. 

\begin{definition}\label{m.process}
Let $O_m$ be the ball of radius $m$ centered around $0$ and let $E_m=O_m \times \{ -1,+1 \}^d$. Starting from $(x,\theta) \in E$ we define the stopped $m$-process as the restriction of the SUZZ on $O_m$, stopped when exiting $O_m$, i.e. $(Z^m_t)_{t \geq 0}=(X^m_t,\Theta^m_t)_{t \geq 0}=(X_{t \wedge \zeta_m},\Theta_{t \wedge \zeta_m})_{t \geq 0}$.
\end{definition}

Since the switching rate of $Z^m$ is bounded as the process is defined on a bounded set and $\lambda_i$ are locally bounded, we have that for any $T>0$, if $N_T$ is the number of switching events before time $T$, then $\mathbb{E}_{x,\theta}[N_T]<\infty$ for any $(x , \theta) \in E_m$. Therefore $Z^m$ is a PDMP that can be seen in the setting of \cite{davis:84} and we have the following as a result of Theorem 5.5 of \cite{davis:84}.

\begin{proposition}\label{generator.m.process}
Let $\mathcal{L}$ the operator defined in (\ref{generator.speed.up:1}). The extended generator $\mathcal{L}^m$ for $Z^m$ has domain $\mathcal{D}(\mathcal{L}^m) \supset C^1(E)$ and for any function $f \in C^1(E)$ we have
\begin{equation*}
\mathcal{L}^mf(x,\theta)=\mathcal{L}f(x,\theta)1_{x \in O_m}.
\end{equation*}
\end{proposition}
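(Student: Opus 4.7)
The plan is to derive this from Davis's general theorem (Theorem 5.5 of \cite{davis:84}) characterizing the extended generator of a PDMP. The process $Z^m$ has been arranged so as to fall squarely within that framework: its state space $\overline{O_m} \times \{\pm 1\}^d$ is bounded, the speed function $s \in C^2$ is bounded on this set, the switching rates $\lambda_i$ are locally bounded, and by Lemma \ref{infinite.switches.lemma:1} applied with the uniform bound on the rates, the number of switches in any finite time interval has finite expectation. The only non-standard feature compared to Davis's setting is the stopping at $\partial O_m$, but this is handled by regarding $\partial O_m$ as an absorbing boundary, which is simply an instance of the "active boundary" case in \cite{davis:84}.

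The concrete steps I would carry out are as follows. First, I would identify the three ingredients that appear in the Davis formula for the extended generator: (i) the vector field of the deterministic flow, here $(x,\theta) \mapsto (\theta s(x), 0)$; (ii) the total jump rate $\lambda(x,\theta)=\sum_i \lambda_i(x,\theta)$; and (iii) the jump kernel $Q((x,\theta), \cdot) = \sum_i \frac{\lambda_i(x,\theta)}{\lambda(x,\theta)} \delta_{(x, F_i[\theta])}$. Substituting these into Davis's expression yields, on the interior $O_m$,
\begin{equation*}
\mathcal{L}^m f(x,\theta) = \sum_{i=1}^d \theta_i s(x) \partial_i f(x,\theta) + \sum_{i=1}^d \lambda_i(x,\theta)\bigl(f(x, F_i[\theta]) - f(x,\theta)\bigr),
\end{equation*}
which is exactly $\mathcal{L}f(x,\theta)$ from Proposition \ref{generator:1}. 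Outside of $O_m$ the stopped process is constant in time, so its extended generator vanishes there, giving the factor $1_{x \in O_m}$.

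Second, I would verify that $C^1(E)$ lies in the domain $\mathcal{D}(\mathcal{L}^m)$. Davis requires $f$ to be absolutely continuous along the flow (immediate for $C^1$ functions composed with the $C^2$ flow $\Phi$), together with the integrability condition on sums of jumps. The latter reduces to $\mathbb{E}_{x,\theta}\bigl[\sum_{T_n \leq t} |f(Z^m_{T_n}) - f(Z^m_{T_n-})|\bigr] < \infty$; since $f$ is bounded on the compact set $\overline{O_m} \times \{\pm 1\}^d$ and $\mathbb{E}_{x,\theta}[N_t] < \infty$ for any $t>0$, this is immediate. No boundary condition on $f$ is required because the process is stopped rather than reflected at $\partial O_m$.

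The only place calling for care is the stopping boundary: one has to check that the formal expression $\mathcal{L} f \cdot 1_{x\in O_m}$ is the genuine extended generator in the sense that $f(Z^m_t) - f(Z^m_0) - \int_0^t \mathcal{L}^m f(Z^m_s)\, ds$ is a martingale, not just a local martingale. This follows since the integrand is bounded (both $\mathcal{L}f$ is bounded on $\overline{O_m}\times\{\pm 1\}^d$ by $C^2$ regularity of $s$ and local boundedness of the $\lambda_i$, and we are restricting to the stopped process). With these checks, the proposition is an immediate consequence of Davis's theorem.
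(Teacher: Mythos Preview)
Your proposal is correct and follows exactly the route the paper takes: the paper simply states that the proposition is ``a result of Theorem 5.5 of \cite{davis:84}'' without further proof, having first noted that the switching rates are bounded on $O_m$ so that $\mathbb{E}_{x,\theta}[N_T]<\infty$. Your write-up is in fact more detailed than the paper's, carefully spelling out the Davis ingredients (flow, jump rate, jump kernel), the domain verification, and the boundary treatment; all of these checks are accurate.
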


Let $n \in \mathbb{N}$ such that (\ref{the.arbitrary.second.der.A.assumption:2}) holds. Let $A_i$ be as in (\ref{rates.formula:2}). For some $a \in (0,1)$ and $\delta>0$, consider the function
\begin{equation}\label{V.function:0}
V(x,\theta)=\exp \{ aU(x)- a\log s(x) + \sum_{i=1}^d \phi(\theta_i A_i(x)) \},
\end{equation}
where 
\begin{equation}\label{phi.function.ch5:1}
\phi(s)=\dfrac{1}{2} \sgn(s) h_{n+1}\left( \delta |s| \right),
\end{equation}
and $h_{n+1}$ as in
(\ref{iterative.logarithms:1}).
The proof of non-explosion relies on the following lemma.
\begin{lemma}\label{lyapunov:1}
 Assume that the rates satisfy (\ref{rates.formula:2}) and Assumptions \ref{non.explo.condition:0}, \ref{ass.lyapunov:1}, \ref{the.one.that.always.holds:1} and \ref{ass.lyapunov:2} hold. Let $\mathcal{L}$ be the operator defined in (\ref{generator.speed.up:1}). Then, there exist $a \in (0,1)$ and $\delta >0$ for which $V$ introduced in (\ref{V.function:0}) is a norm-like function, i.e.  $\lim_{\| x \| \rightarrow \infty}V(x,\theta)=+\infty $ and there exists a compact set $C$ and $b, c>0$ such that for all $(x,\theta) \in E$
\begin{equation}\label{lyapunov:2}
 \mathcal{L}V(x,\theta) \leq -c V(x,\theta) +b 1_{(x,\theta)\in C}.
\end{equation}
\end{lemma}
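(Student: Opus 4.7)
The plan is to compute $\mathcal L V$ explicitly, show that for $\|x\|$ large the ratio $\mathcal L V / V$ is dominated by a negative multiple of $\sum_j |A_j(x)|$, and then invoke Assumption \ref{ass.lyapunov:1} to convert this into the Foster--Lyapunov bound (\ref{lyapunov:2}). Writing $V=e^W$ with $W(x,\theta)=aU(x)-a\log s(x)+\sum_i\phi(\theta_iA_i(x))$ and using $s(\partial_jU-\partial_j\log s)=A_j$, the product rule gives
\begin{equation*}
\frac{\mathcal L V(x,\theta)}{V(x,\theta)}= a\sum_{j=1}^d\theta_jA_j(x)+s(x)\sum_{i,j}\theta_i\theta_j\,\phi'(\theta_iA_i(x))\,\partial_jA_i(x)+\sum_{j=1}^d\lambda_j(x,\theta)\bigl(e^{-2\phi(\theta_jA_j(x))}-1\bigr),
\end{equation*}
where the exponent simplification uses that $\phi$ is odd, so flipping $\theta_j$ sends $\phi(\theta_jA_j)\mapsto-\phi(\theta_jA_j)$ and multiplies $V$ by $e^{-2\phi(\theta_jA_j)}$.

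Next, I will group the non-Hessian terms coordinate by coordinate and split on the sign of $\psi_j:=\theta_jA_j$. When $\psi_j>0$, the switching contribution $[\psi_j]^+(e^{-2\phi(\psi_j)}-1)=\psi_j(e^{-2\phi(\psi_j)}-1)$ is negative, and combined with $a\psi_j$ produces $|A_j|\bigl(a-1+e^{-2\phi(\psi_j)}\bigr)\le -\tfrac{1-a}{2}|A_j|$ once $|A_j|$ is large enough that $e^{-2\phi(\psi_j)}<\tfrac{1-a}{2}$. The refresh contribution $\gamma_j(e^{-2\phi}-1)\in(-\bar\gamma,0)$ is harmless here. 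When $\psi_j<0$, $[\psi_j]^+=0$ and the identity $e^{h_{n+1}(\delta|A_j|)}-1=h_n(\delta|A_j|)$ yields a contribution $-a|A_j|+\gamma_jh_n(\delta|A_j|)$; since $h_n$ is an iterated logarithm, this is at most $-\tfrac{a}{2}|A_j|$ for $|A_j|$ large. Combining the two cases, outside a large ball every coordinate contributes at most $-\kappa|A_j|+O(1)$ with $\kappa=\tfrac12\min(a,1-a)$, and the bounded remainders can be absorbed into a fraction of $\sum_j|A_j|$ using the explicit strength $A>\max\{3d\bar\gamma,4d(d-1)\bar\gamma\}$ in Assumption \ref{ass.lyapunov:1}.

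The Hessian-type term is handled using $\phi'(s)=\tfrac{\delta}{2}\prod_{k=0}^n(1+h_k(\delta|s|))^{-1}$, which is bounded above by $C_\delta/[(1+|s|)(1+\log(1+|s|))]$ for $|s|$ large. Thus
\begin{equation*}
\Bigl|s(x)\sum_{i,j}\theta_i\theta_j\phi'(\theta_iA_i)\partial_jA_i\Bigr|\le C_\delta\,s(x)\sum_i\frac{\sum_j|\partial_jA_i(x)|}{(1+|A_i(x)|)(1+\log(1+|A_i(x)|))},
\end{equation*}
which is $o(\sum_k|A_k(x)|)$ by Assumption~\ref{ass.lyapunov:2}, equation (\ref{the.arbitrary.second.der.A.assumption:1}). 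Choosing first $a\in(0,1)$ (e.g.\ $a=1/2$) and then $\delta>0$ sufficiently small, this lower-order term is dominated by the coordinate-wise negative contributions, so $\mathcal L V/V\le -c$ outside a compact set $C$, which is the desired inequality (\ref{lyapunov:2}).

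Finally, for the norm-likeness of $V$, I will show $W\to+\infty$ as $\|x\|\to\infty$. Since $A_i=s\,\partial_i(U-\log s)$ we have $\sum_i|A_i|=s\|\nabla(U-\log s)\|_1$, so (\ref{the.arbitrary.second.der.A.assumption:2}) reads $\sum_i|A_i|=o\bigl(f_n(U-\log s)\bigr)$. Applying the increasing function $h_n=f_n^{-1}$ gives $h_n(\delta|A_i|)\le h_n(\sum_k|A_k|)< U(x)-\log s(x)$ eventually, hence $\tfrac12 h_{n+1}(\delta|A_i|)\le\tfrac12\log\bigl(1+U-\log s\bigr)$. Combined with $U-\log s\to+\infty$ (which follows from Assumption~\ref{non.explo.condition:0}), one obtains $W\ge a(U-\log s)-\tfrac{d}{2}\log(1+U-\log s)\to+\infty$. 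The main obstacle is the careful coordinate-wise bookkeeping that turns the seemingly dangerous $\gamma_j(e^{-2\phi}-1)$ term (when $\psi_j<0$) into an iterated logarithm $\gamma_jh_n(\delta|A_j|)$ that is cleanly dominated by the $-a|A_j|$ drift, which is precisely what motivates the choice of $\phi$ built from $h_{n+1}$.
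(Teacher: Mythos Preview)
Your overall architecture is correct and matches the paper's: compute $\mathcal L V/V$, split each coordinate on the sign of $\psi_j=\theta_jA_j$, use the identity $e^{-2\phi(\psi_j)}=(1+h_n(\delta|A_j|))^{\mp 1}$, and control the Hessian-type term via (\ref{the.arbitrary.second.der.A.assumption:1}). The norm-likeness argument at the end is also fine.

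The genuine gap is in your choice of parameters. You fix $a=\tfrac12$ and then propose to take $\delta$ small. But your ``$O(1)$ bounded remainders'' are \emph{not} bounded uniformly in $\delta$: for a coordinate with $\psi_j>0$, the per-coordinate contribution is $|A_j|\bigl(a-1+\tfrac{1}{1+h_n(\delta|A_j|)}\bigr)$, which stays nonnegative for all $|A_j|\le \tfrac1\delta f_n\bigl(\tfrac{a}{1-a}\bigr)$ and attains a maximum of order $1/\delta$ on that range. Since Assumption~\ref{ass.lyapunov:1} only guarantees $\sum_j|A_j(x)|>A$ for $\|x\|$ large (and not $\sum_j|A_j(x)|\to\infty$), the negative drift $-\kappa\sum_j|A_j|\ge -\kappa A$ cannot absorb a remainder that blows up like $d/\delta$. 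So ``$\delta$ sufficiently small'' goes the wrong way.

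The paper's remedy is to couple the two parameters: with $\epsilon$ chosen so that $A>d(d-1)(\bar\gamma+\epsilon)^2/\epsilon$ (which the explicit bound $A>4d(d-1)\bar\gamma$ ensures for $\epsilon=\bar\gamma$), one sets $a=\delta(\bar\gamma+\epsilon)$ and then takes $\delta$ small. Under this coupling the positivity threshold $P(\delta)=\tfrac1\delta f_n\bigl(\tfrac{a}{1-a}\bigr)\to\bar\gamma+\epsilon$ stays bounded, while the bracket itself is $\le a=\delta(\bar\gamma+\epsilon)$; so the worst positive contribution from $d-1$ ``bad'' coordinates is $\le (d-1)P(\delta)\,\delta(\bar\gamma+\epsilon)$. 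The ``good'' coordinate (the one with $|A_k|\ge\sum_i|A_i|/d>A/d>M(\delta)$) contributes $\le -\delta\epsilon\,\sum_i|A_i|/d$, and the ratio of these two quantities is \emph{independent of $\delta$} and strictly less than one by the choice of $\epsilon$. This is the step where the specific constant $4d(d-1)\bar\gamma$ in (\ref{non.decay.of.rates:1}) is actually used; your vague ``absorbed into a fraction of $\sum_j|A_j|$'' does not capture this balance.
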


\begin{proof}[Proof of Lemma \ref{lyapunov:1}]
One can verify that $V \in C^1$ therefore $V \in D(\mathcal{L})$. Note that 
\begin{equation*}
V(x,F_i(\theta))-V(x,\theta)=V(x,\theta)\left( \exp\{ \phi(-\theta_iA_i(x))-\phi(\theta_iA_i(x)) \}-1 \right)
\end{equation*}
We then calculate
\begin{align}
\dfrac{\mathcal{L}V(x,\theta)}{V(x,\theta)}=\sum_{i=1}^d \{ &\theta_i a A_i(x) + \sum_{j=1}^d \theta_i \theta_j s(x) \partial_iA_j(x)\phi^{'}(\theta_jA_j(x)) \nonumber \\
&+ [(\theta_i A_i(x))^++\gamma_i(x)](\exp \{ \phi (-\theta_i A_i(x)) - \phi(\theta_i A_i(x)) \} -1) \} \label{lyapunov:2.5} \vspace{3mm}
\end{align}
\vspace{3mm}
Note that 
\begin{equation*}
\exp\{ \phi (-u) - \phi(u) \}=\left( 1+ h_n\left( \delta |u| \right)\right)^{-\sgn(u)}
\end{equation*}
and 
\begin{equation*}
\phi^{'}(u)=\dfrac{\delta}{2}\dfrac{1}{(1+\delta |u|)}\prod_{k=1}^n\frac{1}{1+ h_k(\delta |u|)} \leq \dfrac{\delta}{2}\dfrac{1}{(1+\delta |u|)(1+ \log ( 1+\delta |u|))}.
\end{equation*}
Consider the $i$th component of the sum in the RHS of (\ref{lyapunov:2.5}) and the following cases.

{\bf Case 1: }$\theta_i A_i(x) \geq 0$. Then the $i$th component of the sum in the RHS of (\ref{lyapunov:2.5}) can be written as
\begin{align}
&a |\theta_i A_i(x)| + \sum_{j=1}^d \theta_i \theta_j s(x) \partial_iA_j(x) \phi^{'}(\theta_j A_j(x)) \nonumber \\
&+\left(|\theta_i A_i(x)|+\gamma_i(x)\right)\left( \dfrac{1}{1+h_n\left(\delta \left| A_i(x) \right|\right)}-1 \right) \nonumber \\
& \leq |A_i(x)| \left[ a - 1 + \dfrac{1}{1+h_n\left(\delta \left| A_i(x) \right| \right)} \right] + \sum_{j=1}^d \theta_i \theta_j s(x) \partial_iA_j(x) \phi^{'}(\theta_j A_j(x))\label{lyapunov:3}
\end{align}
where we used that $\gamma_i(x) \geq 0$. 

{\bf Case 2: }$\theta_i A_i(x)<0$.  Then the $i$th component of the sum in the RHS of (\ref{lyapunov:2.5}) can be written as
\begin{align*}
& a \theta_i A_i(x) + \sum_{j=1}^d \theta_i \theta_j s(x) \partial_iA_j(x) \phi^{'}(\theta_j A_j(x)) \\
&+ [(\theta_i A_i(x))^+ +\gamma_i(x)] \underbrace{\log(1+ ... \log}_\text{$n$} ( 1+ \delta |A_i(x)|)...)  \\
& \leq -a|A_i(x)|+\bar{\gamma} \underbrace{\log( 1 + ... \log }_\text{$n$}(1+ \delta |A_i(x)| )... ) + \sum_{j=1}^d \theta_i \theta_j s(x) \partial_iA_j(x) \phi^{'}(\theta_j A_j(x))  \\
&\leq |A_i(x)|\left( -a+\bar{\gamma}\delta \right) + \sum_{j=1}^d \theta_i \theta_j s(x) \partial_iA_j(x) \phi^{'}(\theta_j A_j(x)),
\end{align*}
where we used that $\log(1+x) \leq x$ 
Combining the two different cases, we get overall that, 
\begin{align}\label{lyapunov:4}
\dfrac{\mathcal{L}V(x,\theta)}{V(x,\theta)} &\leq \sum_{i=1}^d|A_i(x)|\max \{ a-1+\dfrac{1}{1+h_n\left( \delta \left| A_i(x) \right| \right)},-a+\delta \bar{\gamma} \} \nonumber \\
&+ \sum_{i=1}^d \sum_{j=1}^d \theta_i \theta_j s(x) \partial_iA_j(x) \phi^{'}(\theta_j A_j(x)).
\end{align}
Let us set $\epsilon=\bar{\gamma}>0$, if $\bar{\gamma}>0$ or $0<\epsilon<A/(2d^2)$, if $\bar{\gamma}=0$. Since we have assumed that $A>3d\bar{\gamma}$ in Assumption \ref{ass.lyapunov:1}, we get that $\dfrac{A}{d}>\bar{\gamma}+2\epsilon$. We will choose $\delta>0$ small enough to be specified later and given $\delta$, we set $a=\delta \bar{\gamma} + \delta \epsilon$. Then the second part of the maximum of (\ref{lyapunov:4}) is equal to $-\delta \epsilon <0$. 

Consider the function 
\begin{equation*}
f(z)=\max \left\{ -\delta \epsilon , \delta \epsilon + \delta \bar{\gamma} -1 + \dfrac{1}{1+h_n\left( \delta z \right)} \right\},
\end{equation*}
so that the first term of the RHS of (\ref{lyapunov:4}) is equal to
\begin{equation*}
    \sum_{i=1}^d|A_i(x)|f(|A_i(x)|).
\end{equation*}
 Our goal will be to show that $\sum_{i=1}^d|A_i(x)|f(|A_i(x)|)<0$ for $\| x \|$ large enough. One can verify that 
\begin{equation*}
f(A)<0 \iff A \geq P(\delta)=
\dfrac{1}{\delta} h_n^{-1} \left( \frac{\delta \epsilon + \delta \bar{\gamma}}{1-\delta \epsilon - \delta \bar{\gamma}} \right)
\end{equation*}
and
\begin{equation*}
f(A)=-\delta \epsilon \iff A \geq M(\delta)=
\dfrac{1}{\delta} h_n^{-1} \left( \frac{2 \delta \epsilon + \delta \bar{\gamma}}{1-2\delta \epsilon - \delta \bar{\gamma}} \right)
\end{equation*}
where $h_n$ as in (\ref{iterative.logarithms:1}). Now, from L'H\^{o}pital's rule,
\begin{equation*}
\lim_{\delta \rightarrow 0}M(\delta)=\bar{\gamma}+2\epsilon \text{  ,  } \lim_{\delta \rightarrow 0}P(\delta)=\bar{\gamma}+\epsilon
\end{equation*}
so if we choose $\delta$ small enough, we have $M(\delta)<A/d$. Suppose $k= \argmax\{ |A_i(x)| : i=1,...,d \} $ so that 
\begin{equation*}
|A_k(x)| \geq \dfrac{\sum_{i=1}^d|A_i(x)|}{d}>\dfrac{A}{d}>M(\delta).
\end{equation*}
Therefore
\begin{equation}\label{A_k.upperbound:1}
|A_k(x)|f(|A_k(x)|) \leq -|A_k(x)|\delta \epsilon \leq -\dfrac{\sum_{i=1}^d|A_i(x)|}{d} \delta \epsilon.
\end{equation}
For any other coordinate $i$, the contribution to the sum $\sum_{i=1}^d|A_i(x)|f(|A_i(x)|)$ will be positive if and only if $|A_i(x)|\leq P(\delta)$. Then, using that $\dfrac{1}{1+h_n\left(\delta z 
\right)} \leq 1$, we can bound
\begin{equation*}
f(z)\leq \delta \epsilon + \delta \bar{\gamma}
\end{equation*}
for $z \geq 0$, and therefore
\begin{equation}\label{A_i.upperbound:1}
\sum_{i \neq k}^d|A_i(x)|f(|A_i(x)|) \leq (d-1) P(\delta)(\delta \epsilon + \delta \bar{\gamma}).
\end{equation}
Recall that when $\bar{\gamma}>0$, we have picked $\epsilon =\bar{\gamma}$, so due to (\ref{non.decay.of.rates:1}) we get 
\begin{equation}\label{again.one.more.lower.bound.on.A:100345}
A>d(d-1)\frac{(\bar{\gamma}+\epsilon)^2}{\epsilon}.
\end{equation}
On the other hand, if $\bar{\gamma}=0$ we have picked $\epsilon<A/(2d^2)$ so (\ref{again.one.more.lower.bound.on.A:100345}) holds in this case as well. Therefore
\begin{equation*}
\lim_{\delta \rightarrow 0}\dfrac{1}{\sum_{i=1}^d|A_i(x)|}(d-1)(\bar{\gamma}+\epsilon)P(\delta)-\dfrac{\epsilon}{d}=\dfrac{1}{\sum_{i=1}^d|A_i(x)|}(d-1)(\bar{\gamma}+\epsilon)^2-\dfrac{\epsilon}{d}<0.
\end{equation*}
Combining this with (\ref{A_k.upperbound:1}) and (\ref{A_i.upperbound:1}) we get
\begin{align}\label{lyapunov:5}
&\sum_{i=1}^d|A_i(x)|f(|A_i(x)|)=|A_k(x)|f(A_k(x))+\sum_{ i \neq k}|A_i(x)|f(|A_i(x)|)  \nonumber \\
& \leq \delta \sum_{i=1}^d|A_i(x)| \left[ - \dfrac{\epsilon}{d} + \dfrac{1}{\sum_{i=1}^d|A_i(x)|}(d-1)(\bar{\gamma}+\epsilon)P(\delta) \right] \leq  -c  \sum_{i=1}^d|A_i(x)| 
\end{align}
for some $c>0$, assuming $\delta$ is small enough.

To finish the proof of the drift condition (\ref{lyapunov:2}), let us consider the last term of the RHS in (\ref{lyapunov:4}). Here, due to (\ref{the.arbitrary.second.der.A.assumption:1}) and assuming that $x \notin C$ for some compact set large enough, we can write
\begin{align}\label{lyapunov:6}
&\sum_{j=1}^d \theta_i \theta_j s(x) \partial_iA_j(x) \phi^{'}(\theta_j A_j(x)) \leq \sum_{i,j=1}^d\dfrac{\delta}{2}\dfrac{s(x)|\partial_iA_j(x)|} {(1+\delta|A_j(x)|)(1+\log(1+\delta |A_j(x)|))} \nonumber \\
&=\left ( \sum_{k=1}^d|A_k(x)| \right ) \frac{1}{2} \sum_{i=1}^d\sum_{j=1}^d \dfrac{s(x)|\partial_iA_j(x)|}{\sum_{k=1}^d|A_k(x)|}\dfrac{1 }{\left( \delta^{-1}+ |A_j(x)| \right) \left(1+ \log \left( 1+ \delta |A_j(x)| \right) \right)}   \nonumber \\  
&\leq \sum_{i=1}^d|A_i(x)|\dfrac{c}{2}.
\end{align}
Then, combining (\ref{lyapunov:4}), (\ref{lyapunov:5}) and (\ref{lyapunov:6}) we get for $x \notin C$,
\begin{equation*}
\dfrac{\mathcal{L}V(x,\theta)}{V(x,\theta)} \leq -\dfrac{c}{2}\sum_{i=1}^d|A_i(x)| \leq -\dfrac{c}{2}A,
\end{equation*}
and this proves (\ref{lyapunov:2}) since $V$ and $\mathcal{L}V$ are bounded on $C$.

Finally, we need to prove that $\lim_{\| x \| \rightarrow \infty}V(x,\theta)=+ \infty$. 
For this, we can find $C,C'>0$ such that for all $\| x \|$ large enough, 
\begin{align*}
&V(x,\theta)  \geq C\exp \left\{ aU(x)-a \log s(x) -\sum_{i=1}^d \dfrac{1}{2}h_{n+1}\left(\delta |A_i(x)|\right) \right\}  \\
& \geq C \exp \left\{ aU(x)-a \log s(x) \right\}  \prod_{i=1}^d(1+h_n(\delta |A_i(x)|))^{-1/2}    \\
& \geq C \exp \{ aU(x)-a \log s(x) \} \left(1 + h_n\left( s(x) \| \nabla\left( U(x)-\log s(x)\right)  \|_1 \right) \right)^{-d/2}  \\
 &\geq C' \left( \left( U(x)-\log s(x) \right) \left(1 + h_n\left( s(x) \| \nabla\left( U(x)-\log s(x)\right)  \|_1 \right) \right)^{-1} \right)^{d/2} \xrightarrow{\| x \| \rightarrow \infty} + \infty,
\end{align*} 
due to (\ref{the.arbitrary.second.der.A.assumption:2}) and Assumption \ref{non.explo.condition:0}, and where we used that $\delta \leq 1$ in the third inequality. This completes the proof. 
\end{proof}

\begin{proof}[Proof of Theorem \ref{non.explo:1}]
Under the assumptions of Theorem \ref{non.explo:1}, we get from Lemma \ref{lyapunov:1} and Proposition \ref{generator.m.process} that there exists a norm-like function $V$ and constants $c,b>0$ such that $\mathcal{L}^mV(x,\theta)\leq c V(x,\theta)+b$ for all $m \in \mathbb{N}$. The assumptions of Theorem 2.1 in \cite{meyn.tweedie.3:93} are satisfied and this proves that the process is non-explosive, i.e. if $\zeta$ as in (\ref{definition.of.zeta:00}) then $\zeta = + \infty$ a.s. Finally, from Lemma \ref{infinite.switches.lemma:1} if $\xi$ as in (\ref{definition.of.xi:00}) then $\xi =+ \infty$ a.s. 
\end{proof}

\section{Proof of Theorem \ref{inv.measure.speed.up:1} (Invariant Measure)}\label{app.invariant:00}

For this section, we first recall the definition of the strong generator of the process.

\begin{definition}
Let $\left( P^t \right)_{t \geq 0}$ be the transition semigroup of the process. We define $\mathcal{D}(\mathcal{A})$ to be the set of all the Borel functions $f$ such that the limit 
\begin{equation*}
    \lim_{t \rightarrow 0}\frac{P^tf-f}{t}
\end{equation*}
exists in the uniform norm (over $(x,\theta) \in E$). We define the strong generator as the operator $\mathcal{A}$, acting on any $f \in \mathcal{D}(\mathcal{A})$ as
\begin{equation*}
    \mathcal{A}f(x,\theta)=\lim_{t \rightarrow 0}\frac{P^tf(x,\theta)-f(x,\theta)}{t}.
\end{equation*}
\end{definition}
We begin by formally proving that a large class of functions belong to the domain of the strong generator of the SUZZ process and for these functions the strong generator is given by the operator $\mathcal{L}$ introduced in (\ref{generator.speed.up:1}).

\begin{lemma}\label{lemma.C1.c.in.the.domain}
Let us assume that the rates satisfy (\ref{rates.formula:2}), that Assumptions \ref{ass.lyapunov:1}, \ref{the.one.that.always.holds:1}, \ref{ass.lyapunov:2} and \ref{non.evanescence.assumption:1} hold and that $(Z_t)_{t \geq 0}$ is a SUZZ process with speed function $s$. If $f \in C^1_{c}(E)$ then $f$ is in the domain of the strong generator $\mathcal{A}$ of $Z$ and $\mathcal{A}f=\mathcal{L}f$, where $\mathcal{L}$ is the operator defined in (\ref{generator.speed.up:1}). This means that 
\begin{equation*}
\left|\frac{\mathbb{E}_{x,\theta}\left[ f(X_t,\Theta_t)\right]-f(x,\theta)}{t}-\mathcal{L}f(x,\theta) \right| \xrightarrow{t \rightarrow 0}0
\end{equation*}
uniformly in $(x,\theta) \in E$.
\end{lemma}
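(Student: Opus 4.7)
The pointwise limit $\mathcal{L}f$ is the content of Proposition \ref{generator:1}, so the task is to upgrade pointwise convergence to uniform convergence on $E$. The plan is to split $E$ into a ``far'' region (where both sides of the required identity vanish trivially for small $t$) and a ``near'' compact region (where the arguments in the proof of Proposition \ref{generator:1} can be made uniform).

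First I would fix the geometry. Since $f \in C^1_c(E)$, its projection $K_x \subset \mathbb{R}^d$ of the support onto $\mathbb{R}^d$ is compact, so one can choose $R > 0$ with $K_x \subset O_{R-1}$. Set $M := \sup_{\overline{O_R}} s$, which is finite because $s$ is continuous and $\overline{O_R}$ is compact. The key topological observation is that any trajectory of the SUZZ process starting with $\|x\| \geq R$ must, in order to reach $O_{R-1}$, cross the annulus $\overline{O_R} \setminus O_{R-1}$, which has width $1$; while inside this annulus the deterministic speed is bounded by $M$, regardless of whether the process switches direction. Hence for any $t < 1/M$ and any $(x,\theta)$ with $\|x\| \geq R$, one has $X_u \notin O_{R-1}$ for all $u \leq t$, a.s., so $f(Z_t) = 0$ a.s. Since also $f(x,\theta) = 0$ and $\mathcal{L}f(x,\theta) = 0$ (all of $f(x,\theta)$, $f(x,F_i[\theta])$ and $\partial_i f(x,\theta)$ vanish), the expression
\begin{equation*}
\frac{\mathbb{E}_{x,\theta}[f(Z_t)] - f(x,\theta)}{t} - \mathcal{L}f(x,\theta)
\end{equation*}
vanishes identically on $\{\|x\|\geq R\} \times \{\pm 1\}^d$ for every such $t$.

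On the complementary region $\overline{O_R} \times \{\pm 1\}^d$, which is compact, I would redo the computation carried out in the proof of Proposition \ref{generator:1} with uniform estimates. The ingredients needed are: $f$ and $\nabla f$ are bounded and uniformly continuous (since $f \in C^1_c$); $s$ is bounded and uniformly continuous on $\overline{O_R}$; under (\ref{rates.formula:2}), with $s \in C^2$ and Assumption \ref{ass.lyapunov:2} giving $A_i \in C^1$ plus local boundedness of $\gamma_i$, each $\lambda_i$ is bounded and uniformly continuous on $\overline{O_R} \times \{\pm 1\}^d$. The decomposition
\begin{equation*}
\mathbb{E}_{x,\theta}[f(Z_t)] = \mathbb{P}_{x,\theta}(S_0(t))\, f(\Phi_{(x,\theta)}(t),\theta) + \sum_{i=1}^d \int_0^t \lambda_i(\Phi_u,\theta)\, e^{-\int_0^u \lambda(\Phi_v,\theta)dv}\, \mathbb{E}_{(\Phi_u, F_i[\theta])}[f(Z_{t-u})]\, du + r(t)
\end{equation*}
(where $r(t) = O(t^2)$ from $\geq 2$ switches, uniformly on $\overline{O_R}$ using the uniform upper bound on $\lambda$) then gives, after Taylor expansion and L'H\^opital as in Proposition \ref{generator:1}, the desired limit uniformly in $(x,\theta) \in \overline{O_R} \times \{\pm 1\}^d$.

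The main obstacle is the far-region estimate: one must rule out the possibility that an unbounded speed function could sweep the process from infinity into $\text{supp}(f)$ arbitrarily quickly. The resolution is precisely the annulus-crossing argument above, which uses only the finiteness of $s$ on a compact neighbourhood of $\text{supp}(f)$ and not any global bound on $s$. Once this is secured, the near-region estimate is a routine uniform version of Proposition \ref{generator:1}, and concatenating the two pieces yields $\|(P^tf - f)/t - \mathcal{L}f\|_\infty \to 0$, which is exactly the claim that $f \in \mathcal{D}(\mathcal{A})$ with $\mathcal{A}f = \mathcal{L}f$.
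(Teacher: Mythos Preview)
Your proof is correct and follows essentially the same two-region strategy as the paper: handle the far region by an annulus-crossing argument (the paper uses $K' = K + \epsilon B(0,1)$ rather than balls $O_{R-1} \subset O_R$, but the idea is identical), and handle the compact near region by uniform estimates. Two minor points of difference are worth noting. First, your time threshold $1/M$ should be $1/(\sqrt{d}M)$, since the Euclidean speed of the deterministic flow is $\|\theta\, s(x)\| = \sqrt{d}\, s(x)$; the paper includes this $\sqrt{d}$ factor. Second, for the near region the paper takes a slightly different route: it first confines the process to a larger compact $K'' = K' + \epsilon B(0,1)$ for small $t$, identifies $Z_t$ with the stopped process $Z^m_t$ on some $O_m \supset K''$, and then invokes Proposition 15b of \cite{durmus.guillin.monmarche:18.2} for the uniform convergence of $(P^tf - f)/t \to \mathcal{L}^m f$ for the bounded-rate PDMP $Z^m$. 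Your direct uniform redo of the Proposition \ref{generator:1} computation is a legitimate alternative and is more self-contained, though it requires explicitly controlling the $O(t^2)$ remainder from multiple switches and the modulus of continuity of $f$, $\nabla f$, $s$, $\lambda_i$ uniformly on the compact; the paper's citation sidesteps this bookkeeping.
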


\begin{proof}[Proof of Lemma \ref{lemma.C1.c.in.the.domain}]
Let $K$ be a compact set that contains the support of $f$ and let 
\begin{equation*}
K'= \{ (x,\theta): \text{ there exists a } y, \text{ with } \| x-y \|<\epsilon , (y,\theta) \in K \}=K+\epsilon B(0,1)
\end{equation*}
for some $\epsilon >0$. Let $\bar{s}$ be an upper bound on the speed function $s$ on $K'$. Then for all $t < t_1=\epsilon/(\sqrt{d}\bar{s})$, if the process starts from any $(x,\theta) \notin K'$, then the process will not have hit $K$ until time $t$, and since the support of $f$ is contained in $K$, $P^tf(x,\theta)=\mathbb{E}_{x,\theta}[f(X_t)]=0$, for all $(x,\theta) \notin K'$. Note also that for all $(x,\theta) \notin K'$, $\mathcal{L}f(x,\theta)=0$ and $f(x,\theta)=0$.\\
Now, let us focus on $(x,\theta) \in K'$. Pick \begin{equation*}
K''=K'+\epsilon B(0,1)
\end{equation*}
and let $\bar{\bar{s}}$ be an upper bound of $s$ on $K''$. Then for all $t \leq t_2=\epsilon/(\sqrt{d}\bar{\bar{s}})<t_1$, the process starting from $K'$ will not have exited $K''$ by time $t$ and if we cover $K''$ by some $O_m$ for some large $m$, then a.s.\ $Z(t)=Z^m(t)$ for all $t \leq t_2$ as long as we start from somewhere in $K'$. Then, for any $(x,\theta) \in K'$ and any $t<t_2$ 
\begin{align*}
\frac{\mathbb{E}_{x,\theta}[f(Z_t)]-f(x,\theta)}{t}-\mathcal{L}f(x,\theta)=\frac{\mathbb{E}_{x,\theta}[f(Z^m_t)]-f(x,\theta)}{t}-\mathcal{L}^m f(x,\theta),
\end{align*}
so overall for all $t < t_2$
\begin{align*}
&\sup_{(x,\theta) \in E}\left| \frac{\mathbb{E}_{x,\theta}[f(Z_t)]-f(x,\theta)}{t}-\mathcal{L}f(x,\theta) \right|  \leq \sup_{(x,\theta) \in K''} \left| \frac{\mathbb{E}_{x,\theta}[f(Z^m_t)]-f(x,\theta)}{t}-\mathcal{L}^mf(x,\theta) \right| \xrightarrow{t \rightarrow 0}0,
\end{align*}
where the convergence can be seen to hold using the proof of Proposition 15b of \cite{durmus.guillin.monmarche:18.2}.
\end{proof} 

We also have the following.

\begin{lemma}\label{path.differentiability.of.semigroup:1}
Assume that the assumptions of Lemma \ref{lemma.C1.c.in.the.domain} hold. If $f \in C^1_c$, then for all $t_0>0$, $P^{t_0}f$ is differentiable along the deterministic flow of the SUZZ process, i.e. for all $(x,\theta) \in E$ there exists a function $D P^{t_0}f (x,\theta) :E \rightarrow \mathbb{R}$ such that for all $(x,\theta) \in E$,
\begin{equation*}
\lim_{t \rightarrow 0}\frac{P^{t_0}f(\Phi_t(x,\theta),\theta)-P^{t_0}f(x,\theta)}{t}= D P^{t_0}f(x,\theta).
\end{equation*}
\end{lemma}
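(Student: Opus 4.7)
The plan is to establish an identity relating $P^s f(\Phi_{(x,\theta)}(t), \theta)$ to the time-shifted semigroup $P^{s+t}f(x, \theta)$, both started from $(x, \theta)$, via a first-switch decomposition, and then to differentiate this identity in $t$ at $t = 0$. Let $\tau_1$ denote the first switching time of the SUZZ process started from $(x, \theta)$, and set $\Gamma(t) := \int_0^t \lambda(\Phi_{(x,\theta)}(v), \theta) \, dv$. Conditioning on the event $\{\tau_1 > t\}$ versus $\{\tau_1 \leq t\}$ and applying the Markov property yields
\begin{equation*}
P^{s+t} f(x,\theta) = e^{-\Gamma(t)}\, P^s f(\Phi_{(x,\theta)}(t), \theta) + R(t),
\end{equation*}
where
\begin{equation*}
R(t) := \int_0^t e^{-\Gamma(u)} \sum_{i=1}^d \lambda_i(\Phi_{(x,\theta)}(u), \theta)\, P^{s+t-u} f(\Phi_{(x,\theta)}(u), F_i[\theta])\, du.
\end{equation*}
Solving for $P^s f(\Phi_{(x,\theta)}(t), \theta)$ gives $P^s f(\Phi_{(x,\theta)}(t), \theta) = e^{\Gamma(t)}\bigl[P^{s+t}f(x,\theta) - R(t)\bigr]$.

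Next, I would differentiate the identity in $t$ at $t = 0$. The prefactor $e^{\Gamma(t)}$ is $C^1$ with derivative $\lambda(x,\theta)$ at $0$. By Lemma \ref{lemma.C1.c.in.the.domain}, $f \in \mathcal{D}(\mathcal{A})$; since the strong generator commutes with the semigroup (a standard consequence of $\{P^t\}$ being an $L^\infty$-contraction semigroup: $\|(P^h P^s f - P^s f)/h - P^s(\mathcal{A}f)\|_\infty \leq \|(P^h f - f)/h - \mathcal{A}f\|_\infty \to 0$), we get $P^s f \in \mathcal{D}(\mathcal{A})$ and $\frac{d}{dt} P^{s+t} f(x, \theta)\big|_{t=0} = \mathcal{A}(P^s f)(x, \theta)$. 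For $R$, which is $O(t)$ with $R(0) = 0$, the Leibniz rule gives $R'(0) = \sum_i \lambda_i(x, \theta) P^s f(x, F_i[\theta])$. Combining these, the flow derivative exists pointwise and satisfies
\begin{equation*}
DP^s f(x, \theta) = \mathcal{A}(P^s f)(x, \theta) - \sum_{i=1}^d \lambda_i(x, \theta)\bigl[P^s f(x, F_i[\theta]) - P^s f(x, \theta)\bigr],
\end{equation*}
which is consistent with the formal relation $\mathcal{L}P^s f = \theta s(x) \cdot \nabla P^s f + (\text{jump terms})$ that would hold if $P^s f$ were smooth.

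The principal technical obstacle is justifying the Leibniz computation for $R'(0)$, which requires the integrand $(u, t) \mapsto \lambda_i(\Phi_{(x,\theta)}(u), \theta) e^{-\Gamma(u)} P^{s+t-u}f(\Phi_{(x,\theta)}(u), F_i[\theta])$ to be continuous at $(0, 0)$. Strong continuity of $r \mapsto P^r f$ in the sup norm follows from $f \in \mathcal{D}(\mathcal{A})$. Continuity of the map $y \mapsto P^s f(y, \eta)$ (a Feller-type property for $f \in C^1_c$) can be obtained by coupling two SUZZ trajectories with nearby starting points via a common Poisson random measure and invoking smoothness of the flow $\Phi$ in initial conditions (which holds since $s \in C^2$) together with continuity of the $A_i$ (from Assumption \ref{ass.lyapunov:2}). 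A secondary subtlety is the possible finite explosion of the deterministic flow in the interval $[0, s+t]$; here one adapts the decomposition by truncating at the explosion time, using Theorem \ref{non.explo:1} to guarantee that a switch occurs almost surely before any flow explosion, so that all quantities remain well-defined.
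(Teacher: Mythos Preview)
Your approach is essentially the paper's: both rearrange a first-switch decomposition of $P^t(P^sf)=P^{s+t}f$ to isolate the flow difference quotient, invoke $P^sf \in \mathcal{D}(\mathcal{A})$ (via Lemma~\ref{lemma.C1.c.in.the.domain}) for the semigroup term, and treat the switch contribution separately. The paper phrases the decomposition as conditioning on the events $S_0(t),S_i(t)$ from Proposition~\ref{generator:1} and disposes of the switch terms by simply referring back to that proof, whereas you write the same identity explicitly and go a step further by flagging the spatial continuity of $y\mapsto P^sf(y,F_i[\theta])$ needed for $R'(0)$ and proposing a coupling/Feller argument---a technical point the paper does not spell out.
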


\begin{proof}[Proof of Lemma \ref{path.differentiability.of.semigroup:1}]
Fix $t_0>0$ and let's write $g=P^{t_0}f$ for notational convenience. Due to Lemma \ref{lemma.C1.c.in.the.domain}, $f$ is in the domain of the strong generator of the SUZZ process, therefore, using standard results (see for example \cite{ethier.kurtz:86}), $g$ is also in the domain of the strong generator of the SUZZ process. From the proof of Proposition \ref{generator:1}, rewriting (\ref{eq.prop.b1:1}) we get
\begin{align*}\label{eq.prop.b1:1}
&\mathbb{P}_{x,\theta}(S_0(t))\dfrac{g(\Phi_t(x,\theta),\theta)-g(x,\theta)}{t}\\
&= \dfrac{\mathbb{E}_{x,\theta}[g(X_t,\Theta_t)]-g(x,\theta)}{t}- \sum_{i=1}^d \frac{\mathbb{E}_{x,\theta}\left[ \left(g(X_t,\Theta_t)-g(x,\theta) \right) 1_{S_i(t)} \right] }{t}. \nonumber
\end{align*}
The first term of the RHS is finite since $g$ is in the domain of the strong generator of SUZZ and the second term can be seen to be finite using the same argument as in the proof of Proposition \ref{generator:1}. Since $\lim_{t \rightarrow 0}\mathbb{P}_{x,\theta}(S_0(t))=1$, we get the result.
\end{proof}

\noindent The following lemma is the stepping stone to prove Theorem \ref{inv.measure.speed.up:1}.

\begin{lemma}\label{zero.generator:2}
Assume that the rates satisfy (\ref{rates.formula:2}), Assumptions \ref{ass.lyapunov:1}, \ref{the.one.that.always.holds:1}, \ref{ass.lyapunov:2} and \ref{non.evanescence.assumption:1} hold and the $(Z_t)_{t \geq 0}$ is a SUZZ process with speed function $s$. Let $(P^t)_{t \geq 0}$ be the transition semi-group of a SUZZ process with speed function $s$ and let $\mathcal{A}$ be the strong generator of the SUZZ process. 
 If $f \in C^1_{c}(E)$ then for all $t_0>0$, $P^{t_0}f \in \mathcal{D}(\mathcal{A})$ and 
\begin{equation}\label{zero.generator:3}
\int_{E}\mathcal{A}P^{t_0}f(x,\theta)\mu(dx,d\theta)=0.
\end{equation}
\end{lemma}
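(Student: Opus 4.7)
The plan is to proceed in two stages: first establish that $P^s f \in \mathcal{D}(\mathcal{A})$, and then perform an explicit change of variables / integration by parts argument against $\mu$ that exhibits the exact cancellation between the transport and switching parts of $\mathcal{A}P^s f$.

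For the first stage, Lemma \ref{lemma.C1.c.in.the.domain} gives $f \in \mathcal{D}(\mathcal{A})$ with $\mathcal{A} f = \mathcal{L} f$. The standard semigroup identity $\tfrac{1}{t}(P^t P^s f - P^s f) = P^s \bigl(\tfrac{1}{t}(P^t f - f)\bigr)$, combined with the fact that $P^s$ is a contraction on $B_b(E)$ in the sup norm, shows that the right-hand side converges uniformly to $P^s \mathcal{A} f$; hence $P^s f \in \mathcal{D}(\mathcal{A})$ and $\mathcal{A} P^s f = P^s \mathcal{A} f$. To actually compute the integral in (\ref{zero.generator:3}), I would then use Lemma \ref{path.differentiability.of.semigroup:1} and mimic the proof of Proposition \ref{generator:1} (which only ever uses path differentiability, not classical $C^1$) to obtain the pointwise formula
\begin{equation*}
\mathcal{A} P^s f(x,\theta) = D P^s f(x,\theta) + \sum_{i=1}^d \lambda_i(x,\theta)\bigl(P^s f(x, F_i[\theta]) - P^s f(x,\theta)\bigr),
\end{equation*}
where $D$ denotes the derivative along the flow $\Phi_{(x,\theta)}$.

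For the switching contribution to $\int_E \mathcal{A}P^s f\, d\mu$, I would use the fact that $\theta \mapsto F_i[\theta]$ is an involution on $\{\pm 1\}^d$ under which both $e^{-U(x)} dx$ and the counting measure on velocities are invariant. Relabelling in the term $\lambda_i(x,\theta) P^s f(x, F_i[\theta])$ gives
\begin{equation*}
\sum_\theta \lambda_i(x,\theta)\bigl[P^s f(x,F_i[\theta]) - P^s f(x,\theta)\bigr] = \sum_\theta \bigl[\lambda_i(x,F_i[\theta]) - \lambda_i(x,\theta)\bigr] P^s f(x,\theta) = -\sum_\theta \theta_i A_i(x) P^s f(x,\theta),
\end{equation*}
where the last equality uses that $\gamma_i$ does not depend on $\theta_i$ and the identity $[a]^+ - [-a]^+ = a$. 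So the full switching contribution equals $-\frac{1}{2^d Z}\sum_{\theta}\int \sum_i \theta_i A_i(x) P^s f(x,\theta) e^{-U(x)}\, dx$.

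For the transport contribution, I would attack $\int D P^s f(x,\theta) e^{-U(x)} dx$ via a change of variables along the flow rather than direct integration by parts (since $P^s f$ is only differentiable along the flow, not componentwise). Writing, for small $\tau > 0$ and fixed $\theta$,
\begin{equation*}
\int \bigl[P^s f(\Phi_{(x,\theta)}(\tau),\theta) - P^s f(x,\theta)\bigr] e^{-U(x)} dx = \int_0^\tau \! \int D P^s f(\Phi_{(x,\theta)}(u),\theta) e^{-U(x)} \, dx\, du
\end{equation*}
by the path-fundamental theorem of calculus, and substituting $y = \Phi_{(x,\theta)}(\tau)$ on the left, a direct computation of the time-derivative of the Jacobian-weighted density at $\tau = 0$ yields $\partial_\tau\bigl[e^{-U(x(y,\tau))} / \det(\partial y/\partial x)\bigr]\big|_{\tau=0} = e^{-U(y)}\bigl(s(y) \theta \cdot \nabla U(y) - \theta \cdot \nabla s(y)\bigr) = e^{-U(y)} \sum_i \theta_i A_i(y)$. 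Dividing by $\tau$ and sending $\tau \to 0$ (justified by continuity of $DP^s f$ along the flow) gives $\int D P^s f(x,\theta) e^{-U(x)} dx = \int P^s f(y,\theta) \sum_i \theta_i A_i(y) e^{-U(y)}\, dy$, which is the exact negative of the switching contribution computed above.

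The main obstacle will be the rigorous justification of the change of variables in the possibly explosive case: since $\Phi(\tau,\cdot)$ is only a diffeomorphism between growing sub-domains of $\mathbb{R}^d$, one must verify that the ``boundary at infinity'' terms produced by points whose forward or backward flow escapes $\mathbb{R}^d$ before time $\tau$ vanish as $\tau \to 0$. This is precisely the role of Assumption \ref{non.evanescence.assumption:1}: the bound $\|x\|^{d-1} s(x) e^{-U(x)} \to 0$ controls the surface mass transported by the flow to infinity, so that the missing sets shrink in $\mu$-measure faster than $\tau$ and contribute nothing in the limit. Once this technical point is established, everything else is a matter of applying Fubini and dominated convergence using that $P^s f$ is bounded by $\|f\|_\infty$.
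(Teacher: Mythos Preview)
Your argument for $P^sf\in\mathcal{D}(\mathcal{A})$ and your treatment of the switching contribution coincide with the paper's. The difference is in how you handle the transport term $\int DP^sf\,e^{-U}\,dx$.

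The paper does not attempt a global change of variables along the flow. Instead it first restricts to the ball $O_m$, picks a linear isometry $A$ sending $\theta$ to $\sqrt{d}\,e_1$, and then writes the integral over $O_m$ as an iterated integral: for each fixed $(y_2,\dots,y_d)$ one integrates along the line segment in $O_m$ parallel to $\theta$, which is exactly a flow line. On each such segment, path-differentiability of $g=P^sf$ (Lemma~\ref{path.differentiability.of.semigroup:1}) is enough to apply the one-variable fundamental theorem of calculus and integrate by parts, producing two explicit boundary terms at the endpoints $z_1,z_2\in\partial O_m$. The bulk term is precisely $\int_{O_m} g\,e^{-U}\sum_i\theta_iA_i\,dx$, which cancels the switching part, and the surviving boundary contribution is bounded by $\|g\|_\infty\cdot\sup_{\partial O_m}\bigl(s\,e^{-U}\bigr)\cdot m^{d-1}$, which tends to zero by Assumption~\ref{non.evanescence.assumption:1}. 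Finally one lets $m\to\infty$, using only that $\mathcal{A}P^sf=P^s\mathcal{L}f$ is bounded.

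Your flow-map approach is morally the same integration by parts, but packaging it as ``substitute $y=\Phi_\tau(x)$, differentiate at $\tau=0$'' creates two technical debts that the paper's route avoids. First, you implicitly need $DP^sf\,e^{-U}$ to be integrable over all of $\mathbb{R}^d$ before you ever write the identity; since $DP^sf=\mathcal{A}P^sf-\text{(switching)}$ and the switching part carries an unbounded factor $\lambda$, this is not free. Second, the Leibniz-type differentiation you invoke involves a domain $D_\tau$ whose boundary sits at infinity when $\tau=0$; turning your heuristic ``missing sets shrink in $\mu$-measure faster than $\tau$'' into a rigorous vanishing of the boundary flux is essentially the same work as the paper's $\partial O_m$ estimate, but less tangible. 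The paper's choice to truncate first to a fixed bounded domain and only afterwards send $m\to\infty$ makes both issues disappear: all integrals are over compact sets, and the boundary terms are genuine surface integrals over $\partial O_m$ where the role of $\|x\|^{d-1}s(x)e^{-U(x)}\to 0$ is transparent.
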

\noindent If we could guarantee that for any $f \in C^1_c(E)$, for all $t_0 >0$, $P^{t_0}f \in C^1_c(E)$, then Lemma \ref{zero.generator:2} would be easy to verify, using similar calculations to the proof of Proposition 5 of \cite{bierkens.roberts_scaling:2017}. 
In our setting, due to the fact that we allow explosive deterministic dynamics, we cannot guarantee that $P^{t_0}f \in C^1_c$. However,
Lemma \ref{path.differentiability.of.semigroup:1} guarantees that the function $P^{t_0}f$ must have a derivative along lines parallel to the vectors $\{ -1,+1 \}^d$. Therefore, the fundamental theorem of calculus and an integration by parts can be used along such lines. When we will integrate $\mathcal{A}P^{t_0}f$ over the ball $O_m$, we may do the integration over many different lines parallel to some vector $\{ -1,+1 \}^d$ and apply the integration by parts technique in each of these lines to get the result. This is the main idea of the following proof.

\begin{proof}[Proof of Lemma \ref{zero.generator:2}]
We begin by noticing that, since $f \in \mathcal{D}(\mathcal{A})$, $P^{t_0}f \in \mathcal{D}(\mathcal{A})$ as well (see for example Proposition 1.1.5 of \cite{ethier.kurtz:86}). If $E_m=O_m \times \{ \pm 1 \}^d$ then
\begin{equation*}
\left| \int_{E_m}\mathcal{A}P^{t_0}f(x,\theta)\mu(dx,d\theta)-\int_{E}\mathcal{A}P^{t_0}f(x,\theta)\mu(dx,d\theta) \right| \xrightarrow{m \rightarrow \infty}0,
\end{equation*}
since $E_m \nearrow E$ and $\mathcal{A}P^{t_0}f=P^{t_0}\mathcal{A}f=P^{t_0}\mathcal{L}f$ is bounded, due to that $\mathcal{L}f$ is bounded. Therefore, it suffices to prove that 
\begin{equation}
\lim_{m \rightarrow \infty}\int_{E_m}\mathcal{A}P^{t_0}f(x,\theta)\mu(dx,d\theta)=0.
\end{equation}
From now on, let us write $g=P^{t_0}f$ for notational convenience. From Lemma \ref{path.differentiability.of.semigroup:1} we get that $g$ has a derivative along the deterministic dynamics of the SUZZ process. This means that
 there exists a function $D g :E \rightarrow \mathbb{R}$ such that if $X_t$ satisfies ODE (\ref{suzz.heuristic.ode:01}) with starting point $(x,\theta)$ then for all $t \geq 0$
\begin{equation*}
g(X_t,\theta)-g(x,\theta)=\int_0^tD g(X_u,\theta)du.
\end{equation*}
Furthermore, using the same argument as in the proof of Proposition \ref{generator:1} we get that for all $(x,\theta) \in E_m$,
\begin{equation*}
\mathcal{A}g(x,\theta)=D g(x,\theta)+\sum_{i=1}^d\lambda_i(x,\theta)\left( g(x,F_i(\theta))-g(x,\theta) \right).
\end{equation*}
Our goal is to use an integration by parts technique to control the first part of the sum of the generator. We fix a $\theta \in \{ -1,+1 \}^d$. We use a linear, invertible transformation $A$ on $\mathbb{R}^d$ such that $A\theta=\sqrt{d}e_1$, $AO_m=O_m$ and $|detA|=1$ and let $e_1=(1,0,...,0) \in \mathbb{R}^d$. We use the transformation $y=(y_1,...,y_d)=Ax$. Also, given $y_2,y_3,...,y_d$ with $y_2^2+...+y_d^2<m^2$ we write 
\begin{equation*}
y_1^*=\theta_1 \sqrt{m^2-y_2^2-...-y_d^2}, 
\end{equation*}
and we omit the dependence on $y_2,...,y_d$ and $\theta_1$ for ease of notation. We also write $x_0=A^{-1}\left( -y_1^*,y_2,...,y_d \right) '$. 

We further consider the solution $X_t$ to the ODE (\ref{suzz.heuristic.ode:01}) starting from $(x_0,\theta)$ and we write $Y_t=(Y^1_t,...,Y^d_t)=AX_t$ so that $Y_t$ starts from $(-y_1^*,y_2,...,y_d)$ and solves the ODE $dY_t/dt=\sqrt{d}s(A^{-1}Y_t) e_1$. Also write $t^*$ such that $Y^1_{t^*}=y_1^*$. 
Then we can write,
\begin{align*}
&\int_{O_m}D g(x,\theta)\exp\{ -U(x) \}dx=\int_{O_m}D g(A^{-1}y,\theta)\exp\{ -U(A^{-1}y) \}dy \\
&= \int_{-m}^m\int_{-\sqrt{m^2-y_2^2}}^{\sqrt{m^2-y_2^2}}...\int_{-y_1^*}^{y_1^*}D g(A^{-1}y,\theta)\exp\{ -U(A^{-1}y) \}dy_1dy_d...dy_2\\
&=\int_{-m}^m\int_{-\sqrt{m^2-y_2^2}}^{\sqrt{m^2-y_2^2}}...\int_{0}^{t^*}D g(A^{-1}Y_t,\theta)\exp\{ -U(A^{-1}Y_t) \}s(A^{-1}Y_t)\sqrt{d} \ dt \ dy_d...dy_2
\end{align*}
Having fixed $y_2,...,y_d$, write $z_1=\left( -y_1^*,y_2,...,y_d \right), z_2=\left( y_1^*,y_2,...,y_d \right) \in \partial O_m$ and $y=(y_1,...,y_d)$ and using integration by parts we get
\begin{align*}
&\int_{0}^{t^*}D g(A^{-1}Y_t,\theta)\exp\{ -U(A^{-1}Y_t) \}s(A^{-1}Y_t)\sqrt{d} \ dt \\
&=g(A^{-1}z_2,\theta)\exp\{ -U(A^{-1}z_2) \}s(A^{-1}z_2)\sqrt{d}-g(A^{-1}z_1,\theta)\exp\{ -U(A^{-1}z_1) \}s(A^{-1}z_1)\sqrt{d}\\
&-\int_{0}^{t^*} g(A^{-1}Y_t,\theta)\frac{d}{dt}\left[ \exp\{ -U(A^{-1}Y_t) \}s(A^{-1}Y_t) \right]\sqrt{d} \ dt\\
&=g(A^{-1}z_2,\theta)\exp\{ -U(A^{-1}z_2) \}s(A^{-1}z_2)\sqrt{d}-g(A^{-1}z_1,\theta)\exp\{ -U(A^{-1}z_1) \}s(A^{-1}z_1)\sqrt{d}\\
&-\int_{0}^{t^*} g(A^{-1}Y_t,\theta)\exp\{ -U(A^{-1}Y_t) \}\\
& \hspace{10 mm} \sum_{i=1}^d \left\{ -\partial_iU(A^{-1}Y_t) \theta_i s(A^{-1}Y_t) +   \partial_is(A^{-1}Y_t)\theta_i \right\} s(A^{-1}Y_t)\sqrt{d} \ dt\\
&=g(A^{-1}z_2,\theta)\exp\{ -U(A^{-1}z_2) \}s(A^{-1}z_2)\sqrt{d}-g(A^{-1}z_1,\theta)\exp\{ -U(A^{-1}z_1) \}s(A^{-1}z_1)\sqrt{d}\\
&-\int_{-y_1^*}^{y_1^*} g(A^{-1}y,\theta)\exp\{ -U(A^{-1}y) \} \sum_{i=1}^d \left( - \partial_iU(A^{-1}y) \theta_i s(A^{-1}y)  +  \partial_is(A^{-1}y)\theta_i \right)dy_1.
\end{align*}
Overall 
\begin{align}\label{int.by.parts.ch5.4:1}
&  \sum_{\theta \in \{ \pm 1 \}^d}\int_{O_m}D g(x,\theta)\exp\{ -U(x) \}dx  \\
&=\sum_{\theta \in \{ \pm 1 \}^d}\int_{-m}^m\int_{-\sqrt{m^2-y_2^2}}^{\sqrt{m^2-y_2^2}}... \int_{-\sqrt{m^2-y_2^2-...-y_{d-1}^2}}^{\sqrt{m^2-y_2^2-...-y_{d-1}^2}} g(A^{-1}z_2,\theta)\exp\{ -U(A^{-1}z_2) \} 
 s(A^{-1}z_2)\sqrt{d}  \nonumber \\
 & \ \ \ \ \ \ \ \ \ \ \ \ \ \ \ \ \ \ \ \  dy_{d} \ ... \ dy_3 \ dy_2  \nonumber \\
&-\sum_{\theta \in \{ \pm 1 \}^d}\int_{-m}^m\int_{-\sqrt{m^2-y_2^2}}^{\sqrt{m^2-y_2^2}}... \int_{-\sqrt{m^2-y_2^2-...-y_{d-1}^2}}^{\sqrt{m^2-y_2^2-...-y_{d-1}^2}}  g(A^{-1}z_1,\theta)\exp\{ -U(A^{-1}z_1) \}  s(A^{-1}z_1)\sqrt{d} \nonumber \\
& \ \ \ \ \ \ \ \ \ \ \ \ \ \ \ \ \ \ \ \  dy_{d} \ ... \ dy_3 \ dy_2  \nonumber \\
&+\sum_{\theta \in \{ \pm 1 \}^d}\int_{O_m}g(x,\theta)\exp\{ -U(x) \} \sum_{i=1}^d \left( \theta_i\partial_iU(x) s(x)  -  \theta_i \partial_is(x)  \right)dx \nonumber.
\end{align}
On the other hand, rearranging the sum over $\theta$ (see also the proof of Proposition 5 of \cite{bierkens.roberts_scaling:2017}), we get
\begin{align}\label{rearrange.sum.ch.5.4:2}
&\sum_{i=1}^d\sum_{\theta \in \{ \pm 1 \}^d}\int_{O_m}\lambda_i(x,\theta) \left( g(x,F_i(\theta))-g(x,\theta) \right)\exp\{ -U(x) \}dx \nonumber \\
& = -\sum_{i=1}^d\sum_{\theta \in \{ \pm 1 \}^d}\int_{O_m}g(x,\theta)\exp \{ -U(x) \}\left( \lambda_i(x,\theta)-\lambda_i(x,F_i(\theta)) \right) dx.
\end{align}
Recall that since the rates satisfy (\ref{rates.formula:2}), we have 
\begin{equation*}
\lambda_i(x,\theta)-\lambda_i(x,F_i(\theta))=\theta_i\partial_iU(x)s(x)  -  \theta_i\partial_is(x).
\end{equation*}
When we integrate $\int_{E_m}\mathcal{A}g d\mu$, we get the sum of the RHS of equations (\ref{int.by.parts.ch5.4:1}) and (\ref{rearrange.sum.ch.5.4:2}). On this sum, only the boundary parts remain and we have
\begin{align*}
&\left| 2^d H \int_{E_m}\mathcal{A}g(x,\theta)\mu(dx,d\theta) \right|=\left| \sum_{\theta \in \{ \pm 1 \}^d}\int_{O_m}\exp\{ -U(x) \} \mathcal{A}g(x,\theta)dx \right| \\
& \leq \sum_{\theta \in \{ \pm 1 \}^d}\int_{-m}^m\int_{-\sqrt{m^2-y_2^2}}^{\sqrt{m^2-y_2^2}}... \int_{-\sqrt{m^2-y_2^2-...-y_{d-1}^2}}^{\sqrt{m^2-y_2^2-...-y_{d-1}^2}} \left|  g(A^{-1}z_2,\theta)\exp\{ -U(A^{-1}z_2) \} 
s(A^{-1}z_2)\sqrt{d} \right| \\
& \ \ \ \ \ \ \ \ \ \ \ \ \ \ \ \ \ \ \ \  dy_{d} \ ... \ dy_3 \ dy_2  \\
&+  \sum_{\theta \in \{ \pm 1 \}^d}\int_{-m}^m\int_{-\sqrt{m^2-y_2^2}}^{\sqrt{m^2-y_2^2}}... \int_{-\sqrt{m^2-y_2^2-...-y_{d-1}^2}}^{\sqrt{m^2-y_2^2-...-y_{d-1}^2}} \left| g(A^{-1}z_1,\theta)\exp\{ -U(A^{-1}z_1) \} s(A^{-1}z_1)\sqrt{d} \right| \\
&  \ \ \ \ \ \ \ \ \ \ \ \ \ \ \ \ \ \ \ \  dy_{d} \ ... \ dy_3 \ dy_2  \\
& \leq 2 \sqrt{d} \| g \|_{\infty} \sup_{x \in \partial O_m}\{ \exp \{ -U(x) \} s(x) \} \int_{x \in \partial O_m}1 dx  \\
& \leq C 2 \sqrt{d} \| g \|_{\infty} \sup_{x \in \partial O_m}\{ \exp \{ -U(x) \} s(x) \} m^{d-1} \xrightarrow{m \rightarrow \infty}0,
\end{align*}
where the convergence holds due to Assumption \ref{non.evanescence.assumption:1}. Here $\| g \|_{\infty}$ is well-defined since $g=P^{t_0}f$ is bounded since $f$ is bounded. This completes the proof.
\end{proof}

 Now, we can conclude with the proof of invariance.
\begin{proof}[Proof of Theorem \ref{inv.measure.speed.up:1}]
Let $(P^t)_{t \geq 0}$ be the transition semi-group of the process and $\mathcal{L}$ the operator defined in (\ref{generator.speed.up:1}). Let $f \in C^1_c(E)$. From Lemma \ref{lemma.C1.c.in.the.domain}, $\mathcal{L}f=\mathcal{A}f$ is the  strong generator of $f$. Because of Dynkin's formula, for any $t > 0$,
\begin{equation*}
P^tf(x,\theta)-f(x,\theta)=\int_0^t \mathcal{A}P^{t_0}f(x,\theta)dt_0.
\end{equation*}
Since $s$ and $\lambda$ are bounded on compact sets, for any $f \in C^{\infty}_c(E)$ we have that $\mathcal{L}f$ is bounded and after integrating both sides over $\mu$ and using Fubini's theorem, we get
\begin{align*}
&\int_E P^tf(x,\theta)  \mu(dx,d\theta) - \int_E f(x,\theta)  \mu (dx,d\theta)= \int_E \int_0^t \mathcal{A}P^{t_0}f(x,\theta)dt_0 \mu (dx,d\theta) \\
&=\int_0^t \int_E \mathcal{A}P^{t_0}f(x,\theta)    \mu (dx,d\theta) dt_0 = 0
\end{align*}
where the last equality follows from Lemma \ref{zero.generator:2}. Therefore, for all $f \in C^{\infty}_c$
\begin{equation}\label{inv.eq:2}
\int P^tf(x,\theta)  \mu(dx,d\theta) = \int f(x,\theta)  \mu(dx,d\theta).
\end{equation}
Since, as a simple application of Stone-Weierstrass, $ C^{\infty}_c$ is dense in $C_c$, (\ref{inv.eq:2}) holds for all $f \in C_c$. This further extends to all bounded measurable functions $f$ from Lusin's theorem \cite{folland:13}. This proves the result.
\end{proof}

\begin{remark}
It can be seen from the proof of Lemma \ref{zero.generator:2} that  Assumption \ref{non.evanescence.assumption:1} was only used in order to ensure that the boundary terms appearing in the integration by parts will decay as $\| x \|$ goes to infinity. If the deterministic dynamics are non-explosive, the path of the process until time $t_0$ has a bounded length, therefore the function $g=P^{t_0}f$ has compact support and all the boundary terms disappear as $\| x \| \rightarrow \infty$. This means that when the deterministic dynamics are non-explosive we do not need to make Assumption \ref{non.evanescence.assumption:1}, as long as we still impose Assumption \ref{non.explo.condition:0}.
\end{remark}

\section{Proof of Theorem \ref{geom.ergo.suzz:1} (Exponential Ergodicity)}\label{app.expo.ergo:00}

We first recall some stability notions of a Markov process. For more details see \cite{meyn.tweedie.2:93, meyn.tweedie:09}.

\begin{definition}
A Markov Process $X$ with state space $E$ is $\phi$-irreducible if there exists a non-trivial measure $\phi$ such that for any point $z$ and any set $A$ of positive $\phi$-measure, there exists a $t$ with $P_z\left( X_t \in A \right)>0$. We call $\phi$ an irreducibility measure.

A set $C$ is petite if there exists a probability measure $\nu$, a $c>0$ and a distribution $a$ on $\mathbb{R}_+$ such that for any $z \in C$ and $A \in \mathcal{B}(E)$
\begin{equation}\label{petite.continuous:1}
K_a(z,A)=\int_0^{+\infty}\mathbb{P}_z \left ( X_t \in A \right )a(dt) \geq c \ \nu(A).
\end{equation}

A set $C$ is called small for continuous or a discrete time Markov process/chain $X_t$, if there  exists a probability measure $\nu$, a $c>0$ and $t > 0$ such that for any $z \in C$ and $A \in \mathcal{B}(E)$
\begin{equation}\label{small.continuous:1}
\mathbb{P}_z \left ( X_t \in A \right ) \geq c \ \nu(A).
\end{equation}

Furthermore, the process is called strongly aperiodic if there exists a petite set $C$ and a $T>0$ such that for any $z \in C$ and $t \geq T$, $\mathbb{P}_z \left ( X_t \in C  \right )>0$.

The process is called a $T$-process if there exists a probability density $a$ on $[0,+\infty)$ and a kernel $K:E \times \mathcal{B}(E) \rightarrow [0,+\infty)$ such that for all $A \in \mathcal{B}(E)$ the function $z \rightarrow K(z,A)$ is lower semi-continuous and for all $z \in E$, $K(z,E)>0$ and
\begin{equation*}
\int_0^{+\infty}\mathbb{P}_z\left( X_t \in \cdot \right) a(dt) \geq K(z, \cdot).
\end{equation*}

Additionally, a $\phi$-irreducible process is Harris recurrent if for all $z \in E$ and $A$ such that $\phi(A)>0$ we have $\mathbb{P}_z \left ( \int_0^{+\infty}1_{A}(X_t)=+\infty \right )=1$. If the invariant measure of the process is finite the process is called positive Harris recurrent.

Finally the process with invariant measure $\mu$ is called ergodic if for all $z \in E$
\begin{equation*}
\| \mathbb{P}_z\left( X_t \in \cdot \right) - \mu(\cdot) \|_{TV} \xrightarrow{t \rightarrow +\infty}0.
\end{equation*}
\end{definition}

For the proof of Theorem \ref{geom.ergo.suzz:1} we will use the following result (Theorem 6.1 in \cite{meyn.tweedie.3:93}).
 \begin{theorem}[Meyn-Tweedie 1993]\label{meyn.tweedie:3.6.1}
Assume that a Markov process $(Z_t)_{t \geq 0}$ on $E$ is c\`adl\`ag, and all compact subset of $E$ are petite for some skeleton chain of $Z$. Assume further that if $\mathcal{L}^m$ is the extended generator of the process $(Z^m_t)_{t \geq 0}$, which is the process $Z$, stopped upon exiting $O_m$, then there exists a function $V:E \rightarrow [1,+\infty)$ and $c,b>0$ and a compact set $C$ such that for all $m \in \mathbb{N}$ and $z \in E$,
\begin{equation}\label{dreft.meyn.tweedie.3}
\mathcal{L}^mV(z) \leq -c V(z)+b1_C(z).
\end{equation}
Then there exists a constant $M>0$ and $\rho <1$ such that for all $z \in E$
\begin{equation*}
\| \mathbb{P}_z\left( Z_t \in \cdot \right) - \pi(\cdot) \|_{TV} \leq M V(z) \rho^t,
\end{equation*}
which means that the process $Z$ is exponentially ergodic,
\end{theorem}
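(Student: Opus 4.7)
The proof follows the classical Foster--Lyapunov strategy, adapted to the possibly explosive setting through the stopped processes $Z^m$. The plan is (i) to upgrade the uniform-in-$m$ generator drift to non-explosivity of $Z$ together with a continuous-time semigroup bound $P^t V \leq e^{-ct} V + b/c$; (ii) to apply discrete-time geometric ergodicity to a skeleton chain, exploiting the petiteness hypothesis; (iii) to transfer the skeleton estimate back to continuous time by total-variation contractivity of the semigroup.

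First I would apply Dynkin's formula to $Z^m$: since $V \in \mathcal{D}(\mathcal{L}^m)$ and $\mathcal{L}^m V \leq -cV + b\mathbf{1}_C$, the process
\begin{equation*}
M^m_t = e^{ct} V(Z^m_t) - \int_0^t e^{cs}\mathcal{L}^m V(Z^m_s)\,ds
\end{equation*}
is a local martingale bounded below, so standard localisation yields $\mathbb{E}_z[V(Z^m_t)] \leq e^{-ct}V(z) + b/c$, uniformly in $m$. Because $V\geq 1$, sending $m\to\infty$ and invoking Fatou's lemma forces $\mathbb{P}_z(\zeta \leq t) = 0$ for every $t$ (otherwise the bound would blow up on the explosion event), so $Z$ is non-explosive and $Z^m_t \to Z_t$ almost surely. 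Monotone convergence then gives $P^t V(z) \leq e^{-ct} V(z) + b/c$ for the full process.

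Second, I would fix a skeleton $Y_n := Z_{n\delta_0}$ for which every compact set is petite. Evaluating the semigroup bound at $t=\delta_0$ produces the discrete-time drift $P^{\delta_0} V \leq \alpha V + \beta \mathbf{1}_{C'}$ with $\alpha = e^{-c\delta_0} < 1$ and $C'$ a compact enlargement of $C$ on which $P^{\delta_0} V$ is bounded. Petiteness of $C'$ is inherited from the hypothesis. The classical discrete-time geometric ergodicity theorem for Markov chains with a Foster--Lyapunov drift to a petite set (Theorem 15.0.1 of \cite{meyn.tweedie:09}) then delivers a unique invariant probability $\mu$ and a rate $\tilde{\rho} < 1$ with
\begin{equation*}
\|P^{n\delta_0}(z,\cdot) - \mu\|_{TV} \leq M V(z)\,\tilde{\rho}^n.
\end{equation*}
Writing a general $t = n\delta_0 + r$ with $r \in [0,\delta_0)$, the total-variation contractivity of the Markov semigroup gives $\|P^t(z,\cdot) - \mu\|_{TV} \leq \|P^{n\delta_0}(z,\cdot) - \mu\|_{TV}$, and setting $\rho = \tilde{\rho}^{1/\delta_0}$ and absorbing a bounded constant into $M$ yields the continuous-time bound claimed.

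The main obstacle will be extracting genuine $\psi$-irreducibility and aperiodicity of the skeleton $Y_n$ from the bare petiteness hypothesis, which is needed to invoke the discrete-time theorem. Petiteness supplies a nontrivial minorising measure which, paired with the drift bound, forces positive Harris recurrence and identifies a maximal irreducibility measure via the standard Meyn--Tweedie recurrence/irreducibility dichotomy; aperiodicity is then handled either by a generic choice of $\delta_0$ or by a period-splitting argument absorbed into constants. A secondary technical point is the uniform-in-$m$ step: one must verify that $C \subset O_m$ for all sufficiently large $m$ so that $\mathcal{L}^m V = \mathcal{L} V$ on $C$ and the drift constants $b, c$ are authentically $m$-independent, legitimising the Fatou passage and the non-explosion conclusion.
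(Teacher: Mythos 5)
The statement you are proving is cited verbatim from Meyn and Tweedie (1993, Theorem 6.1); the paper supplies no independent proof, only the citation, so there is no in-paper argument to compare against. Your three-step outline (a uniform-in-$m$ semigroup bound from the stopped-generator drift, geometric ergodicity of a skeleton via petiteness and a discrete-time Foster--Lyapunov condition, and transfer to continuous time by total-variation contractivity) matches the Meyn--Tweedie strategy in structure. Two points, however, need attention.

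The genuine gap: both your non-explosion step and the construction of a compact discrete-time drift set silently require $V$ to be \emph{norm-like}, that is, $V(x)\to\infty$ as $\|x\|\to\infty$. Without this, the claim that ``the bound would blow up on the explosion event'' fails: if $V$ is bounded, then $V(Z^m_t)$ stays bounded as $m\to\infty$ even on $\{\zeta\leq t\}$, so the inequality $\mathbb{E}_z[V(Z^m_t)]\leq e^{-ct}V(z)+b/c$ gives no information about explosion, and the Fatou step collapses. The correct argument bounds $\mathbb{P}_z(\zeta_m\leq t)\cdot\inf_{\partial O_m}V\leq e^{-ct}V(z)+b/c$ and lets $m\to\infty$, which requires $\inf_{\partial O_m}V\to\infty$. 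Likewise, converting $P^{\delta_0}V\leq e^{-c\delta_0}V + b/c$ into a drift toward a \emph{compact} set $C'$ requires that a sublevel set $\{V\leq M\}$ be compact for $M$ large, which again needs $V$ norm-like. This hypothesis is part of Meyn--Tweedie's condition (CD3) and is established in the paper's Lemma~\ref{lyapunov:1}, which proves explicitly that the constructed $V$ is norm-like; the theorem as transcribed here omits it, and your proof should make the dependence explicit rather than treat it as automatic from $V\geq 1$.

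A minor slip: the process $M^m_t = e^{ct}V(Z^m_t) - \int_0^t e^{cs}\mathcal{L}^m V(Z^m_s)\,ds$ you write is not a local martingale. Differentiating gives an extra drift term $ce^{ct}V(Z^m_t)\,dt$; the correct local martingale is $e^{ct}V(Z^m_t) - V(z) - \int_0^t e^{cs}\bigl(\mathcal{L}^m V + cV\bigr)(Z^m_s)\,ds$. The semigroup bound you want, $\mathbb{E}_z[V(Z^m_t)]\leq e^{-ct}V(z)+b/c$, still follows, most cleanly by applying Gr\"onwall to $u(t):=\mathbb{E}_z[V(Z^m_t)]$ with $u'(t)\leq -c\,u(t)+b$ from the drift inequality, but the martingale identity as written is wrong.
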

From the proof of non-explosivity we have that the function $V$ introduced in (\ref{V.function:0}) satisfies the drift condition (\ref{dreft.meyn.tweedie.3}) for some compact set $C$. Therefore, in order to prove Theorem \ref{geom.ergo.suzz:1} we need to prove that the SUZZ process has all the compact sets as petite for some skeleton chain. The focus of this section is to prove this property.

In order to do this we need to establish the reachability property, introduced in \cite{bierkens.roberts.zitt:2019}.  

Given a speed function $s$, generating the family of deterministic flows $\{ \Phi_t(x,\theta),  t \geq 0\}$ for every $(x,\theta) \in E$, we define as control sequence an object $u=(t,\iota)$, where $t=(t_0,...,t_m)\in (0,+\infty)^{m+1}$, $\iota=(i_1,...,i_m) \in \{ 1,...,n \}^m$ for some $m \in \mathbb{N}$. Starting from $(x,\theta) \in E$, a control sequence $u$ gives rise to a SUZZ trajectory $(X_t,\Theta_t)$ as follows:
Start from $(x,\theta)$ and follow direction $\theta$ for $t_0$ time, i.e. set $X_t=\Phi_t(x,\theta) , \Theta_t=\theta$ for $t \in [0,t_0)$. Then, switch the $i_1$th component of $\theta$ to $F_{i_1}(\theta)$ and follow that direction for $t_1$ time, i.e. set $X_t=\Phi_{t-t_0}\left(\Phi_{t_0}(x,\theta),F_{i_1}(\theta) \right), \Theta_t=F_{i_1}(\theta)$ for $t \in [t_0,t_0+t_1)$. Continue similarly until time $t_0+...+t_m$. Write $\tau_k=\sum_{i=0}^{k-1}t_i$ for the time of the $k$th switch and denote the final position $(X_{\tau_{m+1}},\Theta_{\tau_{m+1}})$ of the path by $\Psi_u(x,\theta)$.
\begin{definition}\label{admissibility:1}
Given a starting point $(x,\theta) \in E$, a control sequence $u=(t,\iota)$ is admissible if for all $k \in \{ 1,...,m \}$ we have $\lambda_{i_k}(X_{\tau_k},\Theta_{\tau_k})>0$.\\
Given two points $(x,\theta),(y,\eta) \in E$ we say that $(y,\eta)$ is reachable from $(x,\theta)$ and write $(x,\theta)\rightarrow(y,\eta)$ if there exists a control sequence $u$ admissible from $(x,\theta)$ such that $\Psi_u(x,\theta)=(y,\eta)$. \\
We write $(x,\theta)\looparrowright(y,\eta)$ if $(x,\theta)\rightarrow(y,\eta)$ and for the admissible sequence $u=(t,\iota)$ connecting the two points, we have that every index of $\{ 1,...,d \}$ appears in $\iota$.
\end{definition}
\noindent We now focus on proving that for any two points $(x,\theta),(y,\eta) \in E$ we have $(x,\theta) \rightarrow (y,\eta)$. Note that we can assume that the SUZZ has minimal rates (i.e. $\gamma_i \equiv 0$ for all $i \in \{ 1,...,d \}$) as higher rates make admissible paths more likely. In the case of original Zig-Zag there is the following result (Theorem 4 of \cite{bierkens.roberts.zitt:2019}).

\begin{theorem}[Bierkens-Roberts-Zitt 2019]\label{normal.zz.reachability}
Assume that $U \in C^3$, $\lim_{\| x \|\rightarrow \infty}U(x)=+\infty$ and there exists an $x_0$ local minimum for $U$ such that $Hess(U)(x_0)$ is strictly positive definite. Then the original Zig-Zag process targeting the potential $U$ satisfies that for all $(x,\theta),(y,\eta)\in E$, $(x,\theta)\looparrowright (y,\eta)$.
\end{theorem}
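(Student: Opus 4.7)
The plan is to assemble an admissible path from $(x,\theta)$ to $(y,\eta)$ in three stages through the potential well around $x_0$: (i) an admissible descent from $(x,\theta)$ into a small neighbourhood $B$ of $x_0$, (ii) a local manoeuvre inside $B$ that both realigns the velocity and visits every coordinate index, and (iii) an admissible ascent from $B$ to $(y,\eta)$. A crucial observation that halves the work is that admissibility for the minimal-rate Zig-Zag is preserved under time reversal: switching coordinate $i$ at position $x$ with incoming velocity $\theta$ requires $\theta_i \partial_i U(x) > 0$, and the time-reversed trajectory at the same geometric switch has incoming velocity $-F_i[\theta]$, whose $i$-th component is still $\theta_i$, so the same positivity condition applies. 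Stage (iii) can therefore be produced by constructing an admissible descent from $(y,-\eta)$ into $B$ and time-reversing it.

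For stage (i) I would use the coercivity of $U$ together with its $C^3$ regularity to show that unless the current state is a critical point, there is always a coordinate $i$ with $\theta_i \partial_i U > 0$ after a short straight run along $\theta$; flipping that coordinate makes the corresponding term in $d U/d t$ negative, so $U$ strictly decreases on the segment that follows. Iterating this yields a control sequence along which $U$ is non-increasing and in fact decreases by a uniformly positive amount on each outer loop, until the trajectory enters any prescribed sublevel set $\{U \le c\}$. Compactness of these sublevel sets, together with bookkeeping to avoid pathological critical points via small preliminary perturbations, then allows one to reach the neighbourhood $B$ of $x_0$ in finitely many switches.

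For stage (ii) the non-degenerate positive-definite Hessian at $x_0$ implies that on $B$ the gradient $\nabla U$ is essentially linear with a strictly positive-definite Jacobian, so each component of $\nabla U$ changes sign along every axis-parallel line through a small neighbourhood of $x_0$. This gives enough local flexibility to schedule short admissible switches that steer to any target velocity $\theta'$ at any interior point $x^\ast$ of $B$. Inserting extra dummy double-flips of any coordinate not yet used (a pair of switches in coordinate $i$ restores the original sign of $\theta_i$ while keeping every intermediate switch admissible thanks to the local flexibility) ensures that the concatenated control sequence contains every index of $\{1,\dots,d\}$, which upgrades reachability $\rightarrow$ to $\looparrowright$.

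The hard part, in my view, is stage (i): one must guarantee that a strictly positive switching rate is continuously available throughout the descent, and that the per-loop decrease in $U$ does not vanish before reaching $B$. The combination of coercivity (so that sublevel sets are compact and no escape to infinity is possible), $C^3$ regularity (so that the geometry of descent can be controlled uniformly), and the non-degenerate minimum (so that the approach to $x_0$ is quantitative near the target) is exactly what is needed to rule out stagnation. Once the three admissible stages are glued, the theorem follows.
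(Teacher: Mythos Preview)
This theorem is not proved in the paper at all: it is quoted verbatim as Theorem 4 of \cite{bierkens.roberts.zitt:2019} and used as a black box, so there is no ``paper's own proof'' to compare against. The paper's only contribution here is Lemma \ref{comparison.original.reachability}, which transfers the reachability conclusion from the ordinary Zig-Zag (targeting $U-\log s$) to the SUZZ (targeting $U$) by observing that the minimal rates of the two processes vanish at exactly the same points.

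That said, your sketch is a faithful outline of how the cited result is actually established in \cite{bierkens.roberts.zitt:2019}. The three-stage architecture (descend to a neighbourhood of a non-degenerate minimum, manoeuvre locally, ascend by time-reversing a second descent) and the time-reversal symmetry you identify are precisely the skeleton of that proof. Your assessment that stage (i) is the delicate part is also correct: the cited paper spends most of its effort there, controlling the descent through a careful case analysis and showing that one can always make quantitative progress toward the well without getting stuck at saddle-type critical sets. Your remark about inserting ``dummy double-flips'' to upgrade $\rightarrow$ to $\looparrowright$ is likewise the right idea, though in the reference this is handled slightly differently by ensuring the local stage already cycles through all coordinates. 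If you wanted to turn your outline into a self-contained proof, the main gap to fill is a rigorous argument that the per-step decrease in $U$ during stage (i) is bounded below on each compact sublevel set away from $B$, which is where the $C^3$ hypothesis and the structure of the critical set are actually used.
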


\noindent We can then generalise these results on the SUZZ using the following Lemma.
\begin{lemma}\label{comparison.original.reachability}
Suppose $s \in C^2$ and $s(x)>0$ for $x \in \mathbb{R}^d$. Then, for any $(x,\theta),(y,\eta) \in E$, $(x,\theta) \rightarrow (y,\eta)$ in a SUZZ with speed $s$, targeting a potential $U$ with minimal rates if and only if $(x,\theta) \rightarrow (y,\eta)$ in an original Zig-Zag, targeting a potential $U-\log s$ with minimal rates.
\end{lemma}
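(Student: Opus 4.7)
The plan rests on two observations. First, both the SUZZ with speed $s$ and the unit-speed Zig-Zag traverse the same geometric rays $\{x+\theta\alpha : \alpha\in[0,\alpha^*)\}$, just at different speeds: writing $X_t = x + \theta \alpha(t)$, the SUZZ ODE reduces to $\alpha'(t) = s(x+\theta\alpha(t))$ with $\alpha(0)=0$, while the Zig-Zag flow has $\alpha(t)=t$. Since $s\in C^2$ and $s>0$, the map $t\mapsto \alpha(t)$ is a strictly increasing $C^1$-diffeomorphism from $[0,t^*(x,\theta))$ onto $[0,\alpha^*)$. Second, with minimal rates $\lambda_i(y,\eta)=[\eta_i A_i(y)]^+$ and $A_i(y)=s(y)\partial_i U(y)-\partial_i s(y)=s(y)\,\partial_i(U-\log s)(y)$, so if $\tilde\lambda_i$ denotes the minimal rate of the Zig-Zag targeting $U-\log s$, one obtains the identity $\lambda_i(y,\eta)=s(y)\,\tilde\lambda_i(y,\eta)$; strict positivity of $s$ then forces the two rates to be simultaneously zero or strictly positive.

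Given these two observations, I would proceed as follows. Starting from an admissible SUZZ control sequence $u=(t,\iota)=((t_0,\dots,t_m),(i_1,\dots,i_m))$ from $(x,\theta)$, write $x_0=x$ and let $x_{k+1}$ denote the position after the $k$-th segment, with direction $\theta_k$ ($\theta_0=\theta$, $\theta_{k+1}=F_{i_{k+1}}[\theta_k]$), so that $x_{k+1}=x_k+\theta_k\alpha_k$ with $\alpha_k=\int_0^{t_k}s(x_k+\theta_k u)\,du \in (0,\infty)$. Define the Zig-Zag control sequence $\tilde u=(\tilde t,\iota)$ with $\tilde t_k=\alpha_k$. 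By the first observation, $\tilde u$ generates exactly the same sequence of switching points $x_0,x_1,\dots,x_{m+1}$ and the same directions, so in particular $\Psi_u(x,\theta)=\Psi_{\tilde u}(x,\theta)$. By the second observation, applied at each $x_k$, admissibility of the switch index $i_k$ for the SUZZ is equivalent to admissibility for the Zig-Zag. This establishes one implication; the reverse is symmetric, using the inverse reparametrization $t_k=\int_0^{\alpha_k}du/s(x_k+\theta_k u)$, which is finite because $s$ is continuous and strictly positive on the compact segment $\{x_k+\theta_k u : u\in[0,\alpha_k]\}$.

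The main thing to be mindful of, rather than a genuine obstacle, is the possibility that the SUZZ deterministic flow is explosive: one might worry that an admissible Zig-Zag trajectory corresponds to a SUZZ trajectory that tries to run past the explosion time. This is ruled out by the definition of a control sequence, whose segment durations $t_k<t^*(x_k,\theta_k)$ are finite by assumption, making the corresponding arc-lengths $\alpha_k$ finite as well. Conversely, any finite arc-length $\alpha_k$ is reached in finite SUZZ time by the integral above, so the correspondence $u\leftrightarrow\tilde u$ is a bijection on admissible control sequences with the same endpoints, which is exactly what the lemma asserts. No growth condition on $s$ beyond positivity is needed at any point of the argument.
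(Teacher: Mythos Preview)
Your approach is essentially the same as the paper's: both proofs observe that the SUZZ minimal rates satisfy $\lambda_i = s\cdot\tilde\lambda_i$ (hence are simultaneously positive since $s>0$), and both build a bijection between admissible control sequences by reparametrizing time along each ray segment. The paper simply asserts that, because $s>0$ is continuous, the SUZZ flow sweeps out the entire half-ray, so for any Zig-Zag segment length there exists a SUZZ time hitting the same endpoint; you make this reparametrization explicit via the arc-length function $\alpha$.

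One small slip worth fixing: your formula $\alpha_k=\int_0^{t_k}s(x_k+\theta_k u)\,du$ is not correct, since the integrand must be evaluated along the SUZZ trajectory, not along the unit-speed ray. From $\alpha'(t)=s(x_k+\theta_k\alpha(t))$ one has $\alpha_k=\int_0^{t_k}s\bigl(x_k+\theta_k\alpha(u)\bigr)\,du$, or equivalently---as you correctly write for the inverse---$t_k=\int_0^{\alpha_k}du/s(x_k+\theta_k u)$. This does not affect the argument: all that is needed is that $t\mapsto\alpha(t)$ is a strictly increasing $C^1$ bijection from $[0,t^*)$ onto $[0,\infty)$, which you have already established.
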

\begin{proof}[Proof of Lemma \ref{comparison.original.reachability}]
Consider an original Zig-Zag process targeting the potential $U-\log s$ with minimal rates. The rates of this process for the $i$ coordinate are $\lambda^0_i(x',\theta')=[\theta'_i\partial_i(U(x')-\log s(x'))]^+$.\\
On the other hand, a SUZZ process with minimal rates targeting the potential $U$ has rates for the $i$ coordinate given by 
\begin{equation*}
\lambda_i(x',\theta')=[\theta'_i(s(x')\partial_iU(x')-\partial_is(x'))]^+=s(x')\lambda^0_i(x',\theta').
\end{equation*}
Therefore for any $(x',\theta') \in E$ and any $i \in \{ 1,...,d \}$ 
\begin{equation}\label{compare.rates.suzz.normalzz:1}
\lambda_i(x',\theta')>0 \iff \lambda^0_i(x',\theta')>0.
\end{equation}
Assume $(x,\theta)\rightarrow (y,\eta)$ with some admissible control sequence $u=(t,\iota)=(t_0,...,t_m,i_1,...,i_m)$ for the original Zig-Zag process, targeting the potential $U-\log s$ with minimal rates. Let $(X_t,\Theta_t)$ be the configuration of that original Zig-Zag path and let $\tau_k=\sum_{i=0}^{k-1}t_i$ be the times of the switches. We have $\lambda^0_{i_k}(X_{\tau_k},\Theta_{\tau_k})>0$ for all $k$.\\
Note that since $s$ is continuous and strictly positive, for any $(x',\theta') \in E$, $\lim_{t \rightarrow +\infty}\| \Phi_t(x',\theta') \|=+\infty$. Therefore, there exists an $s_0>0$ such that $\Phi_{s_0}(x,\theta)=x+t_0\theta=X_{\tau_1}$. Likewise, there exists an $s_1>0$ such that $\Phi_{s_1}\left(X_{\tau_1},\Theta_{\tau_1}\right)=X_{\tau_2}$ and via induction we can construct for all $k \in \{ 0,...,m \}$ an $s_k$ such that $\Phi_{s_k}\left(X_{\tau_k},\Theta_{\tau_k}\right)=X_{\tau_{k+1}}$. Then the control sequence $\tilde{u}=(s,\iota)=(s_0,...,s_m,i_1,...,i_m)$ is an admissible sequence starting from $(x,\theta)$ for the SUZZ targeting the potential $U$ with minimal rates. Furthermore, the ending point of $\tilde{u}$ starting from $(x,\theta)$ is $(y,\eta)$.

The other way around, i.e. that an admissible path for the SUZZ process targeting $U$ implies existence of an admissible path for the ZZ process targeting $U-\log s$ follows using similar arguments. \qedhere
\end{proof}

\noindent Combining Theorem \ref{normal.zz.reachability} and Lemma \ref{comparison.original.reachability} we can prove the following.
\noindent
\begin{proposition}\label{reachability.suzz:1}
Assume that $s \in C^2$ is a strictly positive function such that Assumption \ref{non.explo.condition:0} holds, $U-\log s \in C^3$ and there exists an $x_0 \in \mathbb{R}^d$ such that $U-\log s$ has a local minimum in $x_0$ with $Hess(U-\log s)(x_0)$ being strictly positive definite. Then, for every $(x,\theta),(y,\eta) \in E$, $(x,\theta)\looparrowright(y,\eta)$.
\end{proposition}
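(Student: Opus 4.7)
The plan is to reduce the statement to the already-established reachability result for the original Zig-Zag (Theorem \ref{normal.zz.reachability}) via the comparison given by Lemma \ref{comparison.original.reachability}. The key observation is that Assumption \ref{non.explo.condition:0}, rewritten, says exactly that $U(x) - \log s(x) \to +\infty$ as $\|x\| \to \infty$, which is precisely the growth condition needed to apply Theorem \ref{normal.zz.reachability} to the potential $\tilde{U} := U - \log s$.

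First I would verify that Theorem \ref{normal.zz.reachability} applies to an original Zig-Zag targeting $\tilde{U}$: the hypotheses $\tilde{U} \in C^3$ and the existence of a nondegenerate local minimum $x_0$ with strictly positive definite Hessian are exactly what is assumed in the statement of Proposition \ref{reachability.suzz:1}, and the divergence at infinity follows from Assumption \ref{non.explo.condition:0} as noted above. This yields $(x,\theta) \looparrowright (y,\eta)$ for every pair of states in $E$, for the original Zig-Zag targeting $\tilde{U}$ with minimal rates.

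Next, I would transfer this conclusion to the SUZZ process with potential $U$ and speed function $s$ using Lemma \ref{comparison.original.reachability}. The lemma provides a bijection of admissible paths: given an admissible control sequence $u = (t_0, \ldots, t_m, i_1, \ldots, i_m)$ for the original Zig-Zag targeting $\tilde{U}$, one obtains an admissible control sequence $\tilde{u} = (s_0, \ldots, s_m, i_1, \ldots, i_m)$ for the SUZZ with potential $U$ and speed $s$, with \emph{identical} index sequence $\iota = (i_1, \ldots, i_m)$ (only the time-lengths change to account for the nonconstant speed). The positivity equivalence (\ref{compare.rates.suzz.normalzz:1}) in the lemma's proof shows the admissibility criterion is preserved.

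The crucial point, and essentially the only step requiring care beyond a direct citation, is that the stronger reachability notion $\looparrowright$ (requiring every coordinate index to appear in $\iota$) transfers through this correspondence: since the index sequence is preserved verbatim between the Zig-Zag and SUZZ admissible paths in Lemma \ref{comparison.original.reachability}, if every $i \in \{1, \ldots, d\}$ appears in the Zig-Zag control sequence, the same holds for the induced SUZZ control sequence. Hence $(x,\theta) \looparrowright (y,\eta)$ in the SUZZ, completing the proof. I expect no substantive obstacle; the work has been pushed into Theorem \ref{normal.zz.reachability} and Lemma \ref{comparison.original.reachability}, and this proposition is their clean composition.
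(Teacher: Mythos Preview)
Your proposal is correct and follows exactly the approach the paper indicates: the proposition is obtained by combining Theorem \ref{normal.zz.reachability} (applied to the potential $\tilde U = U - \log s$, whose hypotheses are precisely those assumed here) with Lemma \ref{comparison.original.reachability}. Your observation that the index sequence $\iota$ is preserved verbatim in the correspondence of Lemma \ref{comparison.original.reachability}, so that $\looparrowright$ (and not merely $\rightarrow$) transfers, is the one detail worth making explicit, and you have done so.
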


\noindent As in \cite{bierkens.roberts.zitt:2019} we can use this to prove that from any starting point and given any other point $z \in E$, the process has a positive probability of visiting a neighbourhood of $z$. The following lemma is the same as Lemma 8 in \cite{bierkens.roberts.zitt:2019}.

\begin{lemma}[Continuous Component]\label{continuous.component.lemma:1}
Assume that $(x,\theta)\looparrowright(y,\eta)$ and the rates $\lambda_i$ are continuous. Then there exist $U_x, V_x \subset \mathbb{R}^d$ open with $x \in U_x , y \in V_x$ and $\epsilon,t_0,c>0$ such that for all $x' \in U_x , t \in [t_0,t_0+\epsilon]$
\begin{equation}\label{continuous.component.lemma:2}
\mathbb{P}_{x',\theta}(X_t\in \cdot , \Theta_t=\eta)\geq c \ Leb(\cdot \cap V_x)
\end{equation}
where $Leb$ is the Lebesgue measure on $\mathbb{R}^d$.
\end{lemma}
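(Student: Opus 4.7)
The plan is to exploit the $\looparrowright$ relation, which guarantees that the admissible control sequence connecting $(x,\theta)$ to $(y,\eta)$ uses every one of the $d$ coordinate indices. This will supply $d$ linearly independent velocity directions along the path, making the map from switching times to endpoints a submersion, hence locally surjective onto a neighborhood of $y$ with positive Jacobian.

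Fix an admissible control sequence $u=(t_0,\ldots,t_m; i_1,\ldots,i_m)$ with $\Psi_u(x,\theta)=(y,\eta)$ and $\{i_1,\ldots,i_m\}=\{1,\ldots,d\}$. For starting points $x'$ near $x$ and perturbed switching intervals $(s_0,\ldots,s_m)$ near $(t_0,\ldots,t_m)$, let $\Psi(x',s)$ denote the endpoint in $\mathbb{R}^d$ of the trajectory driven by $u$. First I would show that $\partial\Psi/\partial s$ at the nominal parameters has rank $d$: the partial derivative with respect to $s_k$ is a positive scalar multiple of the velocity $\eta^{(k)} s(X_{\tau_k})$ inherited just after the $k$-th switch, and because each coordinate gets flipped somewhere in the sequence, the span of these vectors is all of $\mathbb{R}^d$. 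Select $d$ indices $k_1<\cdots<k_d$ whose corresponding velocity vectors are linearly independent and use the implicit function theorem to view the endpoint as a local diffeomorphism of $(s_{k_1},\ldots,s_{k_d})$ while freezing the remaining switching times.

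Next I would write down the joint density of the first $m$ switching times and switching coordinates of the SUZZ process started at $x'$, evaluated on the event that exactly the prescribed coordinates switch in the prescribed order with intervals in a small box around $(t_0,\ldots,t_m)$. This density is a product of factors of the form $\lambda_{i_k}(\Phi_{\cdot}(s_k),\cdot)$ multiplied by the survival factor $\exp\{-\int_0^{s_k}\lambda(\Phi_\cdot(u),\cdot)\,du\}$ along each leg. Admissibility gives $\lambda_{i_k}>0$ at the nominal switching points, and continuity of the $\lambda_i$ together with continuous dependence of the flow on initial conditions then yields a uniform positive lower bound on this density in a product neighborhood of $(x;t_0,\ldots,t_m)$. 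Then I add one more free leg after the last switch of length $s_m\in[t_0',t_0'+\epsilon]$ (where $t_0'$ is chosen so the total elapsed time equals the $t$ of the statement) so that both the endpoint position and the elapsed sampling time $t$ vary freely.

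Changing variables from $(s_{k_1},\ldots,s_{k_d},s_m)$ to $(X_t,t)$ via the diffeomorphism above, the Jacobian determinant is nonzero at the nominal parameters, hence bounded below by a positive constant in a neighborhood. Integrating out the frozen switching times against their positive density contributes a positive constant, so the resulting lower bound on $\mathbb{P}_{x',\theta}(X_t\in A,\Theta_t=\eta)$ becomes $c\,\mathrm{Leb}(A\cap V_x)$ for all $x'\in U_x$ and $t\in[t_0,t_0+\epsilon]$, giving the claim. The main technical obstacle is selecting the $d$ distinguished switching times so that the sub-Jacobian is genuinely nondegenerate and uniformly so on a neighborhood; this is where the hypothesis that every coordinate index appears in $\iota$ (the $\looparrowright$ strengthening of $\rightarrow$) is essential, as otherwise some direction of $\mathbb{R}^d$ would remain inaccessible by perturbing switching times alone.
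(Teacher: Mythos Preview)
Your approach is essentially the same strategy as the paper's: exploit the admissible control sequence, use that every coordinate index appears in $\iota$ to get full rank of the switching-time-to-endpoint map, and invoke continuity of the rates and the flow to make all bounds uniform on a neighbourhood of $(x,t_0,\ldots,t_m)$. The paper packages this slightly differently---it builds the process by Poisson thinning against a constant dominating rate $d\bar\lambda$ on a large ball, conditions on an explicit event $B$ (first $m$ exponentials land in prescribed windows, first $m$ uniforms accept the prescribed coordinates), and then invokes Lemma~6.3 of \cite{benaim.leborgne.malrieu.zitt:15} to convert full rank of the linear map $(T_1,\ldots,T_m)\mapsto\Omega(x,\theta,t,T_1,\ldots,T_m)$ into a Lebesgue lower bound---whereas you write the joint density of the switching times directly and change variables. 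Both routes are sound and yield the same conclusion; your version is arguably more transparent, the paper's is closer to the existing Zig-Zag literature.

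One technical point to tighten: your statement that $\partial\Psi/\partial s_k$ is ``a positive scalar multiple of the velocity $\eta^{(k)}s(X_{\tau_k})$'' is not quite right for SUZZ. Perturbing $s_k$ moves the position at the end of leg $k$ by that velocity, but the endpoint then sees this perturbation pushed forward by the derivative of the remaining (nonlinear) flow, so $\partial\Psi/\partial s_k = D(\Phi^{(m)}_{s_m}\circ\cdots\circ\Phi^{(k+1)}_{s_{k+1}})\cdot \theta^{(k)}s(\cdot)$. The full-rank conclusion survives, most cleanly by noting that the reparametrisation from SUZZ times $s_k$ to arc-lengths (equivalently, original Zig-Zag times) along each leg is a diffeomorphism, and in arc-length the Jacobian columns are exactly $\pm 2e_{i_k}$, which span $\mathbb{R}^d$ by the $\looparrowright$ hypothesis. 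The paper in fact writes the endpoint as the linear $\Omega$ function, which is literally the unit-speed formula; you should make the reparametrisation step explicit rather than rely on the oversimplified derivative.
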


 The proof is very similar in spirit to the proof of Lemma 8 (Continuous Component) of \cite{bierkens.roberts.zitt:2019}, therefore we do not present it here but we present it on the Supplementary material B. The main idea is that since there is an admissible path from $(x,\theta) \looparrowright (y,\eta)$, the process has a positive probability to follow some path very close to the admissible path. Therefore there is a positive probability that starting from somewhere close to $(x,\theta)$ the process ends up somewhere close to $(y,\eta)$. 

Lemma \ref{continuous.component.lemma:1} allows us to prove stability properties for the process. 

\begin{proposition}\label{compacts.petite:1}
Let $(Z_t)_{t \geq 0}=(X_t,\Theta_t)_{t \geq 0}$ be a SUZZ process with strictly positive speed function $s \in C^2$. Assume that the process is non-explosive and have $\mu$ as invariant. Assume that for all $(x,\theta),(y,\eta)\in E$, $(x,\theta)\looparrowright (y,\eta) $. Then, the process is $T$, $\phi$-irreducible and strongly aperiodic. If in addition the invariant measure $\mu$ is a probability measure then all compact sets are petite, they are also small for some skeleton chain and the process is positive Harris recurrent and ergodic.
\end{proposition}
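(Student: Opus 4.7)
All four conclusions will be deduced from the continuous component Lemma \ref{continuous.component.lemma:1} by standard Meyn--Tweedie arguments, closely analogous to those carried out for the original Zig-Zag in \cite{bierkens.roberts.zitt:2019}.

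First I would establish $\phi$-irreducibility. Let $\phi$ be the product of Lebesgue measure on $\mathbb{R}^d$ and counting measure on $\{\pm 1\}^d$. Given $(x,\theta) \in E$ and a Borel set $A \subset E$ with $\phi(A)>0$, pick $\eta \in \{\pm 1\}^d$ so that $A_\eta = \{y : (y,\eta) \in A\}$ has positive Lebesgue measure, and choose a Lebesgue density point $y_0 \in A_\eta$. By hypothesis $(x,\theta) \looparrowright (y_0,\eta)$, so Lemma \ref{continuous.component.lemma:1} provides a time $t$ and a neighbourhood $V$ of $y_0$ with $\mathbb{P}_{x,\theta}(Z_t \in \cdot \cap (V \times \{\eta\})) \geq c \cdot \mathrm{Leb}(\cdot \cap V)$; since $V \cap A_\eta$ has positive Lebesgue measure we get $P^t((x,\theta),A)>0$. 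For the $T$-process property, fix a strictly positive probability density $a$ on $[0,\infty)$ and set $K((x,\theta),B) = \int_0^\infty a(t)\,\mathbb{P}_{x,\theta}(Z_t \in B)\,dt$. For each $(x,\theta),(y,\eta)$, Lemma \ref{continuous.component.lemma:1} yields a neighbourhood $U_x$ of $x$, a neighbourhood $V_x$ of $y$, an interval $[t_0,t_0+\epsilon]$ and a constant $c>0$ such that $\mathbb{P}_{x',\theta}(Z_t \in \cdot \cap V_x \times \{\eta\}) \geq c \cdot \mathrm{Leb}(\cdot \cap V_x)$ for $x' \in U_x$ and $t \in [t_0,t_0+\epsilon]$. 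Integrating against $a$ bounds $K$ from below by a lower semi-continuous kernel on $U_x \times \{\theta\}$; patching these local bounds shows that $K$ has a nontrivial lower semi-continuous minorant everywhere, verifying the $T$-process condition.

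For strong aperiodicity, pick any reference point $(x_0,\theta_0)$ and exploit $(x_0,\theta_0) \looparrowright (x_0,\theta_0)$. The continuous component lemma gives an open neighbourhood $U_0$ of $x_0$ and an open interval of times on which the transition density into $U_0 \times \{\theta_0\}$ is bounded below. Concatenating admissible cycles and varying each inter-switch duration $t_k$ (which can be tuned continuously because the flow $\Phi_{(x,\theta)}(\cdot)$ is continuous in its time argument) produces admissible return trajectories whose total durations fill an interval of the form $[T,\infty)$, so the associated return probabilities to the petite set $C = U_0 \times \{\theta_0\}$ are positive for all $t \geq T$.

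Finally, assuming $\mu(E)<\infty$, the combination of the $T$-process property with $\phi$-irreducibility implies that every compact set is petite (Theorem 4.1 of \cite{meyn.tweedie.2:93}); existence of the finite invariant measure $\mu$ upgrades $\phi$-irreducibility to positive Harris recurrence, and together with strong aperiodicity this yields ergodicity. Small sets for any skeleton chain follow because for a $\phi$-irreducible aperiodic $T$-process every petite set is small for a suitably sampled skeleton, and the usual translation argument extends this to all $\delta$-skeletons. The main obstacle in this programme is the strong aperiodicity step: one must ensure that the admissible return times form a genuine interval $[T,\infty)$ and not a sparse discrete subset. This is handled by using the continuous dependence of the flow on the leg-durations $t_k$ to stretch and shrink cycles, which is possible because $s \in C^2$ and strictly positive; the remaining steps are routine invocations of Meyn--Tweedie theory.
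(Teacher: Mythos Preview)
Your proposal is correct and follows essentially the same route as the paper: both derive the $T$-process, $\phi$-irreducibility and strong aperiodicity properties from Lemma~\ref{continuous.component.lemma:1} via the arguments of \cite{bierkens.roberts.zitt:2019}, and then invoke the same Meyn--Tweedie results (Theorem~4.1 and Proposition~6.1 of \cite{meyn.tweedie.2:93}, Theorem~4.4 of \cite{meyn.tweedie.3:93}) for petiteness, smallness of skeletons, Harris recurrence and ergodicity. The one step the paper spells out that you elide is the passage from ``finite invariant measure'' to Harris recurrence: the paper uses Fatou's lemma and invariance of $\mu$ to show $\mathbb{P}_\mu(\lim_{t\to\infty}\|X_t\|=\infty)=0$, producing a non-evanescent starting point which, combined with the $T$-process property, gives Harris recurrence; your claim that the finite invariant measure directly ``upgrades $\phi$-irreducibility to positive Harris recurrence'' needs this non-evanescence link made explicit.
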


\begin{proof}[Proof of Proposition \ref{compacts.petite:1}]
The fact that the SUZZ process under the assumptions of Proposition \ref{compacts.petite:1} is $T$-process, $\phi$-irreducible and strongly aperiodic can be proven using the same proof as in Theorem 5 of \cite{bierkens.roberts.zitt:2019}.

Now, we prove that every compact set is petite. A standard argument as in \cite{bierkens.roberts.zitt:2019} shows that for $\mu$-almost all starting points $(x,\theta)$ we have $\mathbb{P}_{x,\theta}(\lim_{t \rightarrow +\infty}\| X_t \|=+\infty)=0$. Indeed, for any compact set $K$, $1_{ \{ X_t \text{ eventually leaves }K \}}=\liminf_{t \rightarrow +\infty}1_{X_t \notin K}$ and by Fatou's lemma
\begin{align*}
\mathbb{P}_{\mu}(X_t \text{ eventually leaves } K ) \leq \liminf_{t \rightarrow +\infty}\mathbb{P}_{\mu}(X_t \notin K)=1-\mu(K).
\end{align*}
By exhausting $E$ with compact sets we get $\mathbb{P}_{\mu}(\lim_{t \rightarrow +\infty}\| X_t \|=+\infty)=0$. More specifically, there exists $(x,\theta) \in E$ such that $\mathbb{P}_{x,\theta}(\lim_{t \rightarrow +\infty} \| X_t \|=+\infty )<1$. From Theorem 4.1 in \cite{meyn.tweedie.2:93} all compact sets are petite if and only if the process is $T$ and $\phi$-irreducible. The result follows.

Furthermore, the process is positive Harris recurrent from an application of Theorem 4.4 of \cite{meyn.tweedie.3:93}.

The proof of the fact that some skeleton of the process is irreducible is the same as in Theorem 5 of \cite{bierkens.roberts.zitt:2019}.

From Theorem 6.1 of \cite{meyn.tweedie.2:93} we get that the process is ergodic.

Finally, all compacts are small for some skeleton chain from Proposition 6.1 in \cite{meyn.tweedie.2:93}.
\end{proof}

\begin{proof}[Proof of Theorem \ref{geom.ergo.suzz:1}]
From Theorems \ref{non.explo:1} and \ref{inv.measure.speed.up:1} we know that the process is non-explosive and $\mu$ introduced in (\ref{zz.inv:3}) is invariant for the SUZZ. By Proposition \ref{reachability.suzz:1} for all $(x,\theta), (y,\eta) \in E$ we have $(x,\theta) \looparrowright (y,\eta)$ and therefore, by Proposition \ref{compacts.petite:1} the process is $\phi$-irreducible , aperiodic and all compact sets are small for some skeleton chain. Also, from Lemma \ref{lyapunov:1}, $V$ as in (\ref{V.function:0}) satisfies the drift condition (\ref{lyapunov:2}). All the conditions of Theorem \ref{meyn.tweedie:3.6.1} are satisfied and the result follows.
\end{proof}

\section{Proof of Theorem \ref{geom.ergo.light.tails:1}}
\begin{proof}[Proof of Theorem \ref{geom.ergo.light.tails:1}]
The proof will rely on the following simple observation. Assume that $\mathcal{L}$ is as in (\ref{generator.speed.up:1}) the extended generator of the SUZZ process targeting $\mu$ with refresh rate $\gamma(x)$ and $\mathcal{L}_{ZZ}^{\tilde{U}}$ is the strong generator of the original Zig-Zag process, with refresh rate $\frac{\gamma(x)}{s(x)}$ targeting the measure  $\nu(dx,d\theta)=\frac{1}{\tilde{H}} \exp\{ -\tilde{U}(x) \}$, where $\tilde{U}(x)=U(x)- \log s(x)$, i.e. for all $f \in C^1$,
\begin{equation*}
    \mathcal{L}_{ZZ}^{\tilde{U}}f(x,\theta)=\sum_{i=1}^d\theta_i\partial_i f(x,\theta) + \left( \left[ \theta_i \partial_i\left( U(x) - \log s(x) \right) \right]^++\frac{\gamma(x)}{s(x)}\right) \left( f(x,F_i(\theta)) -f(x, \theta) \right),
\end{equation*}
then
\begin{equation*}
    \mathcal{L}f(x,\theta)=s(x)\mathcal{L}_{ZZ}^{\tilde{U}}f(x,\theta).
\end{equation*}

Let us consider the Zig-Zag process, having generator $\mathcal{L}_{ZZ}^{\tilde{U}}$ and targeting the measure $\nu$. This is a special case of a SUZZ process where the speed function is equal to $1$ everywhere and where the potential function $U$ we have used throughout the document is replaced by $\tilde{U}$. Due to Assumption \ref{light.tail.assumption:1}, the potential $\tilde{U}$ along with the constant speed function, equal to $1$, satisfy all the assumptions of Lemma \ref{lyapunov:1}. Furthermore, since by Assumption \ref{light.tail.assumption:1}, $\gamma(x)\leq \tilde{M} s(x)$, the refresh rate of the Zig-Zag process is bounded. Therefore the function $V$ introduced in (\ref{V.function:0}) satisfies that there exists a compact set $C$ and $c,b>0$ such that
\begin{equation*}
    \mathcal{L}_{ZZ}^{\tilde{U}}V(x,\theta) \leq -c V(x,\theta)+b1_{x \in C}.
\end{equation*}
Therefore,
\begin{equation}\label{again.drift.condition:1}
    \mathcal{L}V(x,\theta) \leq -s(x)c V(x,\theta)+s(x)b1_{x \in C} \leq -c' V(x,\theta)+b'1_{x \in C}
\end{equation}
since $s$ is assumed to be bounded away from $0$ and bounded on compact sets. Using Theorem 2.1 in \cite{meyn.tweedie.3:93}, in the same way as in the conclusion of the proof of Theorem \ref{non.explo:1}, we get that the process is non-explosive. 

For the other three bullet points, given that we have found a function $V$ satisfying the drift condition (\ref{again.drift.condition:1}), the proofs of Theorems \ref{inv.measure.speed.up:1} and \ref{geom.ergo.suzz:1} carry over here and we get the CLT result as a consequence of Theorem 2 of \cite{chan.geyer:94}.
\end{proof}

\section{Proof of Proposition \ref{special.case.theorem:1}}\label{proof.of.proposition.special.case:00}

\begin{proof}[Proof of Proposition \ref{special.case.theorem:1}]
We will consider the case where 
\begin{equation*}
    s(x)=1+\| x \|_2^2.
\end{equation*} 
The case $s(x)=\sqrt{1+\| x \|_2^2}$ follows using a similar argument.
We begin by noting that we can write
\begin{equation}\label{cx:1}
    A_i(x)=s(x)\partial_iU(x)-\partial_is(x)=c(x)x_i
\end{equation}
where if $\pi$ as in (\ref{pi.subexponential:1}) then
\begin{equation*}
    c(x)=a\left( 1+ \| x \|_2^2  \right)^{\frac{a}{2}}-2
\end{equation*}
and if $\pi$ as in (\ref{pi.student:1}) then
\begin{equation*}
    c(x)=\left(\nu + d  \right) \frac{1+ \| x  \|_2^2}{\nu + \| x  \|_2^2}-2.
\end{equation*}
Given that $a>0$ or $\nu$ satisfies (\ref{nu.lower.bound:1}), we see that for both targets, there exists a $K>0$ such that for all $\| x \|_2 \geq K$
\begin{equation}\label{c.lower.bound:1}
    c(x) \geq c > \frac{27}{2}
    d^3.
\end{equation}

We set $b=\left(\frac{3}{2}\right)^{1/\pi}$ and we consider the function 
\begin{equation}
    V(x,\theta)=\sum_{i=1}^d \left( 1+x_i^2 \right) b^{\arctan \left(\theta_i x_i \right)}.
\end{equation}
 First of all, $V \in C^1$ therefore $V \in D(\mathcal{L})$. Furthermore, $\lim_{\| x \| \rightarrow \infty}V(x,\theta)=+ \infty$ and $V(x,\theta) \geq d b^{-\pi/2}$>0. Let $\| x \|_2 \geq K$. We write
\begin{align*}
    \mathcal{L}V(x,\theta)=&\sum_{i=1}^d \underbrace{2 \theta_i x_i b^{\arctan \left( \theta_ix_i \right)} \left( 1+\| x \|_2^2 \right)}_\text{term $a_i$}+\underbrace{ \left( 1+ \| x \|_2^2  \right) \log(b) b^{\arctan\left( \theta_ix_i \right)}}_\text{term $b_i$}\\
    &+\sum_{i=1}^d\underbrace{c(x)\left[ \theta_i x_i \right]^+ \left( 1+x_i^2 \right) b^{\arctan \left(  \theta_i x_i \right)} \left(b^{-2\arctan \left( \theta_ix_i \right)}-1  \right)}_\text{term $c_i$}.
\end{align*}

Our goal is to show that $V$ satisfies the drift condition (\ref{lyapunov:2}). Let $\eta>0$ be small enough (to be determined later), and having fixed $\eta$, let $\epsilon>0$ be small enough (to be determined later).
 For any $\delta>0$, consider the set $E_{\delta}=E_{\delta}(x)=\left\{ i \in \{ 1,...,d \} : \frac{|x_i|}{\| x \|_2} \geq \frac{\delta}{\sqrt{d}}\right\}$. Set 
 \begin{equation}\label{delta.definition:1}
     \delta=\left(\frac{ (2+\eta)d}{c \left(  1- b^{-\pi} \right)} \right)^{1/2}=\left( 3\frac{(2+\eta)d}{c}  \right)^{1/2},
     \end{equation}
     where $c$ as in (\ref{c.lower.bound:1}). We first note that for any $i \in \{ 1,...,d \}$, if $\theta_i x_i\leq 0$ then $c_i=0$ and $a_i \leq 0$ so $a_i+c_i \leq 0$. 
 
 If $\theta_i x_i >0$ and $i \in E_{\delta}$, then for $\epsilon>0$ small enough and assuming that $K$ is large enough (given $\epsilon$)
 , for any $\| x \|_2 \geq K$,
\begin{align*}
    &a_i+c_i \leq |x_i| 2b^{\frac{\pi}{2}}\left(1+\epsilon \right)\| x \|_2^2-c(x)|x_i||x_i|^2b^{\frac{\pi}{2}-\epsilon} \left(  1- b^{-\pi+2\epsilon} \right) \\
    &\leq |x_i|b^{\frac{\pi}{2}}\left(2 \left(1+\epsilon \right)\| x  \|_2^2-c(x)b^{-\epsilon}\left(  1- b^{-\pi+2\epsilon} \right)\frac{\| x  \|_2^2}{d} \delta^2 \right) \\
    &\leq |x_i|b^{\frac{\pi}{2}}  \| x  \|_2^2\left( 2\left(1+\epsilon \right)-c b^{-\epsilon}\left(  1- b^{-\pi+2\epsilon} \right)\frac{\delta^2}{d}  \right)<0,
\end{align*}
from the definition of $\delta$. Here we have used that if $i \in E_{\delta}$, then $|x_i| \geq \| x \|_2\frac{\delta}{\sqrt{d}}$.

On the other hand, if $\theta_ix_i>0$ and $i \notin E_d$ then $c_i \leq 0$ so
\begin{equation*}
    a_i+c_i\leq a_i \leq |x_i|2b^{\frac{\pi}{2}}(1+\epsilon)\| x \|_2^2 \leq 2(1+\epsilon)b^{\frac{\pi}{2}}\frac{\delta}{\sqrt{d}}\| x \|_2^3,
\end{equation*}
where we have used the fact that $|x_i|\leq \| x \|_2\frac{\delta}{\sqrt{d}}$ when $i \notin E_{\delta}$.

Now consider $j=\argmax\{ |x_i|, i=1,...,d \}$. Combining all the results above, we then get
\begin{equation}\label{aibi:1}
    \sum_{i\neq j}a_i+c_i \leq \sum_{i \neq j, \\ i \notin E_{\delta}, \\ \theta_ix_i>0}a_i+c_i \leq (d-1)2(1+\epsilon)b^{\frac{\pi}{2}}\frac{\delta}{\sqrt{d}}\| x \|_2^3.
\end{equation}

Now let us consider the quantity $a_j+c_j$. Since $|x_j|=\max\{ |x_i|, i=1,...,d \}$  we have $|x_j|\geq \frac{\| x \|_2}{\sqrt{\delta}}$, i.e. $j \in E_1$. We distinguish between the following cases.

{\bf Case 1: }$\theta_jx_j>0$. In this case, from the previous calculations
\begin{align*}
   & a_j+c_j \leq |x_j|\| x \|_2^2b^{\frac{\pi}{2}}\left( 2(1+\epsilon)-cb^{-\epsilon}\left(  1- b^{-\pi+2\epsilon} \right)\frac{1}{d}  \right)\\
    &\leq -\| x \|_2^3\frac{1}{\sqrt{d}} b^{\frac{\pi}{2}}\left( cb^{-\epsilon}\left(  1- b^{-\pi+2\epsilon} \right)\frac{1}{d} - 2(1+\epsilon) \right).
\end{align*}

{\bf Case 2: } $\theta_jx_j \leq 0$. In this case, $c_j=0$ so
\begin{equation*}
    a_j+c_j = -2|x_j|b^{-\arctan(|x_j|)}(1+\|  x \|_2^2) \leq -2 \| x \|_2^3\frac{1}{\sqrt{d}}b^{-\frac{\pi}{2}},
\end{equation*}
where we have used that $x_j \geq \frac{\|  x \|_2}{\sqrt{d}}$. In any case
\begin{equation}\label{ajcj:1}
    a_j+c_j\leq -\| x  \|_2^3 b^{\frac{\pi}{2}}\frac{1}{\sqrt{d}} \min\left\{  cb^{-\epsilon}\left(  1- b^{-\pi+2\epsilon} \right)\frac{1}{d} - 2(1+\epsilon) , 2   b^{-\pi}\right\}.
\end{equation}
Overall, combining (\ref{aibi:1}) and (\ref{ajcj:1}) we have
\begin{equation}\label{aibi:2}
    \sum_{i=1}^da_i+c_i \leq \| x \|_2^3 \frac{b^{\frac{\pi}{2}}}{\sqrt{d}}\left[ (d-1)2(1+\epsilon)\delta- \min\left\{  cb^{-\epsilon}\left(  1- b^{-\pi+2\epsilon} \right)\frac{1}{d} - 2(1+\epsilon) , 2   b^{-\pi}\right\} \right].
\end{equation}
where $\delta$ as in (\ref{delta.definition:1}) for a small value of $\eta$. Our goal is to show that the term in the square bracket on the RHS is negative for small values of $\epsilon$ and $\eta$. To do this, it suffices to show that
\begin{equation}\label{two.inequalities:1}
    (d-1)2\delta<c\left( 1-b^{-\pi} \right)\frac{1}{d}-2
\end{equation}
and that
\begin{equation}\label{two.inequalities:2}
    (d-1)2\delta<2b^{-\pi},
\end{equation}
where $\delta$ as in (\ref{delta.definition:1}) for $\eta$ small enough. We will first establish (\ref{two.inequalities:1}). To do this, it suffices to prove that
\begin{align*}
    (d-1)2\left(  \frac{2d}{c \left( 1-b^{-\pi} \right) } \right)^{1/2} <c\left( 1-b^{-\pi} \right) \frac{1}{d}-2   \iff 2^{3/2}(d-1)d^{1/2}c^{-1/2}3^{1/2}+2<\frac{c}{3d}
\end{align*}
since $b^{\pi}=3/2$. To prove the last inequality, it suffices to prove that 
\begin{equation*}
    2<\frac{1}{2}\frac{c}{3d} 
\end{equation*}
and that
\begin{equation*}
    2^{3/2}d^{3/2}c^{-1/2}3^{1/2}<\frac{1}{2}\frac{c}{3d}.
\end{equation*}
The first equation is equivalent to asking that $c>12d$ which holds due to (\ref{c.lower.bound:1}). The second equation is equivalent to asking that $c>2^{5/3}3d^{5/3}$ which also holds due to (\ref{c.lower.bound:1}). This verifies (\ref{two.inequalities:1}). In order to prove (\ref{two.inequalities:2}) for small enough $\eta$, it suffices to establish that
\begin{equation*}
    d\left( \frac{2d}{c(1-b^{-\pi})} \right)^{1/2}<b^{-\pi} \iff 2^{1/2}d^{3/2}c^{-1/2}3^{1/2}<\frac{2}{3} \iff c>\frac{27}{2}d^3,
\end{equation*}
which holds from (\ref{c.lower.bound:1}). This confirms that the term in the square bracket on the RHS of (\ref{aibi:2}) is negative for small values of $\epsilon$ and $\eta$. Overall, from (\ref{aibi:2}) we get that there exist $K,k>0$ and such that for any $\| x  \|_2 \geq K$,
\begin{equation}
    \sum_{i=1}^da_i+c_i \leq -k \| x \|_2^3.
\end{equation}
Furthermore, one can bound for any $\| x \|_2 \geq K$,
\begin{equation*}
    \sum_{i=1}^db_i\leq 2d\log(b)b^{\pi/2}\| x \|_2^2,
\end{equation*}
therefore, by increasing $K$ appropriately, for any $\| x \|_2 \geq K$,
\begin{equation*}
    \mathcal{L}V(x,\theta)=\sum_{i=1}^da_i+b_i+c_i \leq -\frac{k}{2}\| x \|_2^3.
\end{equation*}
Finally, one can bound $V$ for all $\|  x \|_2 \geq K$ by
\begin{equation*}
    V(x,\theta) \leq b^{\pi/2}\| x \|_2^2+ b^{\pi/2}d,
\end{equation*}
so overall, by further appropriately increasing $K$, we get that for all $\| x  \|_2 \geq K$,
\begin{equation}\label{drift.specific.s:1003}
    \mathcal{L}V(x,\theta) \leq -\frac{k}{4}\| x \|_2 V(x,\theta).
\end{equation}
while on $\{ x: \| x \|_2 < K \}$, $\mathcal{L}V$ is bounded. This proves that $V$ satisfies the drift condition (\ref{lyapunov:2}).  The rest of the proof can be concluded using the same arguments as in Appendices \ref{app.non.explo:00},  \ref{app.invariant:00} and \ref{app.expo.ergo:00}.

The case where $s(x)=\left( 1+\| x  \|_2^2 \right)^{1/2}$ can be treated in a similar way.

Finally, one can prove Remark \ref{specific.remark:1} by considering the function \begin{equation*}
    V(x,\theta)=\sum_{i=1}^d \left( 1+\left(Bx\right)_i^2 \right) b^{\arctan \left(\theta_i \left(Bx\right)_i \right)},
\end{equation*}
where $B$ as in the Remark \ref{specific.remark:1}, and prove that $V$ satisfies the drift condition (\ref{lyapunov:2}) using similar arguments.
\end{proof}

\section{Proof of results in Section \ref{relationship.suzz.zz:00}}

\begin{proof}[Proof of Proposition \ref{space.transformation:1982}]\label{proof.of.space.uniform:1}
Introduce the space transformation $f(x)=\int_0^x1/s(u)du$ and let $M^+=\lim_{x \rightarrow +\infty}f(x) \in (0,+\infty]$ and $-M^-=\lim_{x \rightarrow -\infty}f(x) \in [-\infty, 0)$. 
Consider the process $(Y_t,\Theta_t)_{t \geq 0}$, where $Y_t=f(X_t)$, defined on $(-M^-,M^+) \times \{ -1,+1 \}$. When the process starts from $(y,\theta)=(f(x),\theta)$, it follows deterministic dynamics 
\begin{equation*}
\dfrac{dY_t}{dt}=f'(X_t)\dfrac{dX_t}{dt}=\dfrac{1}{s(X_t)}\Theta_ts(X_t)=\Theta_t,
\end{equation*}
and 
\begin{equation*}
\dfrac{d\Theta_t}{dt}=0.
\end{equation*}
 Furthermore, the random time $T$ when the sign of $\Theta_t$ changes, is the same time when the direction changes for the $(X_t,\Theta_t)_{t \geq 0}$ process. Therefore,
\begin{align*}
\mathbb{P}(T\geq t)=\exp\left\{ - \int_0^t \lambda(X_u,\theta)du \right\}=\exp \left\{ -\int_0^t\lambda(f^{-1}(Y_u),\theta) du \right\}.
\end{align*}
This proves that $(Y_t,\Theta_t)_{t \geq 0}$ is a one-dimensional original Zig-Zag with unit speed function and intensity rates $\lambda_Y$ given by
\begin{align*}
&\lambda_Y(y,\theta)=\lambda(f^{-1}(y),\theta)=\left[\theta\left( s(f^{-1}(y)) U'(f^{-1}(y))-s'(f^{-1}(y)) \right)\right]^+ \\
&=\left[ \theta \left( \left( U(f^{-1}(y)) -\log s(f^{-1}(y)) \right)'  \right)\right]^+=\left[\theta \tilde{U}'(y)\right]^+
\end{align*}
where $\tilde{U}(y)=U(f^{-1}(y))-\log s(f^{-1}(y))$. The result follows from standard results for the original Zig-Zag process (see \cite{bierkens.roberts_superefficient:2019}).
\end{proof}

\begin{proof}[Proof of Theorem \ref{uniform.ergodicity:1}]

From Proposition \ref{space.transformation:1982} we know that  if $f(x)=\int_0^x1/s(u)du$, then the process $(Y_t,\Theta_t)_{t \geq 0}=(f(X_t),\Theta_t)_{t \geq 0}$ is an original Zig-Zag. From a separation of variables technique (see Appendix \ref{ode.solution:00}), the solution of the ODE
\begin{equation}\label{ode.one.dimension:1}
\left\{ \begin{array}{l}
\dfrac{dX_t}{dt}=\theta s(X_t) \vspace{1.5mm}\\
X_0=x
\end{array} \right.
\end{equation}
satisfies
\begin{equation}\label{representation.of.ode.suzz:1}
X_t=f^{-1}(f(x)+\theta t).
\end{equation} 
Let $\lim_{x \rightarrow +\infty}f(x)=M^+ \in (0,+\infty]$ and $\lim_{x \rightarrow -\infty}f(x)=-M^- \in [-\infty , 0)$.
Then, the ODE has finite explosion time when starting from some $(x,+1)$ if and only if $M^+<\infty$, whereas it has finite explosion time when starting from some $(x,-1)$ if and only if $M^- < \infty$.
This further means that if the deterministic dynamics starting from some $(x,\theta)$ explode in finite time, then the deterministic dynamics starting from any other $(x',\theta)$ explode in finite time too. We also note that the 
Zig-Zag process $(Y_t,\Theta_t)_{t \geq 0}$ is defined on the space $(-M^-,M^+)\times \{ \pm 1 \}$.

We will begin by proving that the process $(X_t,\Theta_t)$ is non-explosive.

Let us first consider the case where the deterministic flow explodes when starting both from $(x,+1)$ and $(x,-1)$. In that case the Zig-Zag process $(Y_t,\Theta_t)$ is defined on the bounded interval $(-M^-,M^+)$.
 Due to Assumption \ref{non.explo:1} and due to Proposition \ref{switch.before.boom:1} we see that the process a.s.\ cannot start from some point $(x',\theta)$ and reach infinity without the occurrence of a direction switch. This means that if $\xi$ and $\zeta$ are as in (\ref{definition.of.xi:00}) and (\ref{definition.of.zeta:00}), then $\xi \leq \zeta$. Therefore, the only way for the process to explode is for $\xi< \infty$. Let us consider the event that $\xi < \infty$ in order to show that it has probability zero. Let $T_1<T_2<...$ be the switching times and let $\lim_{n \rightarrow \infty}T_n=\xi<\infty$. The $T_n$'s are also the switching times of the Zig-Zag $(Y_t,\Theta_t)$ process, which is defined on the bounded interval $(-M^-,M^+)$. Let $Y_{T_1},Y_{T_2},...$ be the switching points of the Zig-Zag process. Since these points lie in a bounded interval, there exists a subsequence of these points that converges to some point $z \in [-M^-,M^+]$. Assume for contradiction that there exists a second accumulation point $z' \neq z$ of the elements of $\{ Y_{T_1},Y_{T_2},... \}$. Since the Zig-Zag process moves with unit speed, it takes at least $\frac{|z-z'|}{2}$ time to get from a small neighbourhood of one accumulation point to a neighbourhood of the other. On the other hand, we have assumed that the switching times $T_n$ converge to a finite time $\xi$, therefore the difference between successive switching times must converge to zero and there is not enough time for the process to travel from one accumulation neighbourhood to the other. Therefore, there exists a unique accumulation point $z$, meaning that $\lim_{n \rightarrow \infty}Y_{T_n}=z$.

We distinguish between two cases for $z$. Assume first that $z \in (-M^-,M^+)$. Since $\lambda \in C^0$, it is bounded in a neighbourhood of $z$. Using the same argument as in Proof of Lemma \ref{infinite.switches.lemma:1} we see that the probability that there are infinitely many switches in finite time in a small neighbourhood of $z$ is zero. 
On the other hand, let us consider the case where $z \in \{ -M^-,M^+ \}$ and let us assume without loss of generality that $z=M^+$. Combining Assumptions \ref{non.explo.condition:0} and \ref{ass.lyapunov:1}, we have $\liminf_{x \rightarrow   +\infty} \left( U(x)-\log s(x) \right)' \geq 0$. This means that for large values of $x$,  
\begin{equation*}
    \lambda(x,-1)=[s(x) U'(x) - s'(x)]^-+\gamma(x)=s(x)[\left( U(x)-\log s(x) \right)']^-+\gamma(x)=\gamma(x)\leq \bar{\gamma}.
\end{equation*}
Therefore, there exists an $\epsilon>0$ such that if $y \in (M^+-\epsilon,M^+)$ then
\begin{equation*}
    \lambda_Y(y,-1)=\lambda(f^{-1}(y),-1)\leq \bar{\gamma}.
\end{equation*}
 On the event that $M^+$ is the accumulation point of switching times there are two possibilities. Either the $Y$-process switches from $-1$ to $+1$ infinitely many times in finite time inside the interval $(M^+-\epsilon, M^+)$ or it does not. Since the process has a rate $\lambda_Y(y,-1)$ that is bounded above in $(M^--\epsilon,M^+)$ the first event has probability zero. The other event is that only finitely many switches from $-1$ to $+1$ occur inside $(M^+-\epsilon,M^+)$ until time $\xi$. In that case, since $M^+$ is an accumulation point for the switching times of $Y$, there are infinitely many switches from $+1$ to $-1$ inside $(M^+-\frac{\epsilon}{2},M^+)$ in finite time. Each of these switches has to be followed by a switch from $-1$ to $+1$. Since only finitely many will occur in $(M^+-\epsilon,M^+)$, this means that in finite time there will be infinitely many switches from $+1$ to $-1$ inside $(M^+-\frac{\epsilon}{2},M^+)$ that have a successor switch from $-1$ to $+1$ outside $(M^+-\epsilon,M^+)$. This means that between any of these two successive switches the process has traveled from $M^+-\frac{\epsilon}{2}$ to $M^+-\epsilon$ without switching and  since the process moves with unit speed this takes at least $\frac{\epsilon}{2}$ time. Therefore, there cannot be infinitely many switches of this form in finite time. Therefore, we reach a contradiction assuming the second possibility. Overall this proves that the process is not explosive.

The case where the flow starting from $(x,+1)$ explodes in finite time but the one starting from $(x,-1)$ does not (and vice versa) can be handled in a similar way. In the case where the flow does not explode either in direction $+1$ or $-1$, the SUZZ algorithm is by definition non-explosive. This finishes the proof of non-explosivity.

We then observe that given non-explosivity, for the proof of Theorem \ref{inv.measure.speed.up:1}, only Assumption \ref{non.evanescence.assumption:1} (which is equivalent to Assumption \ref{non.explo.condition:0} in dimension one) is needed. We therefore conclude that the process has $\mu$ as in (\ref{zz.inv:3}) as invariant. 

Let us now assume that the flow starting from any $(x,\theta)$ explodes in finite time. Following the same argument as in the proof of Lemma 15 of \cite{bierkens.roberts_scaling:2017} for the one-dimensional original Zig-Zag, we get that the entire state space $(-M^-,M^+) \times \{ -1,+1 \}$ is petite for the transformed Zig-Zag process.
Therefore, the transformed Zig-Zag is uniformly ergodic and we get 
that there exists a constant $M>0$ and $\rho<1$ and a measure $\nu$ such that for all $f(x) \in (-M^-,M^+)$ and any $\theta \in \{ \pm 1 \}$, 
\begin{equation*}
\| \mathbb{P}_{f(x),\theta}( \left( f(X_t),\Theta_t \right) \in \cdot)-\nu(\cdot) \|_{TV}\leq M \rho^t .
\end{equation*}
Since $f$ is $1-1$ we get that for all $x \in \mathbb{R}$ , $\theta \in \{ \pm 1 \}$,
 \begin{equation*}
\| \mathbb{P}_{x,\theta}(Z_t \in \cdot)-\nu(f^{-1}(\cdot), \cdot) \|_{TV}\leq M \rho^t.
\end{equation*}
Since $\mu$ is invariant for the process $Z_t$,
we get $\nu(f^{-1}(\cdot), \cdot)=\mu(\cdot)$. This means that for any $(x,\theta) \in E$,
 \begin{equation*}
\| \mathbb{P}_{x,\theta}(Z_t \in \cdot)-\mu(\cdot) \|_{TV} \leq M \rho^t,
\end{equation*}
which proves the uniform ergodicity result.

Let us now assume that both the flows starting from $(x,+1)$ and $(x,-1)$ do not explode so the Zig-Zag $Y$-process is defined on $\mathbb{R} \times \{ -1,+1 \}$. Combining Assumptions \ref{non.explo.condition:0} and \ref{ass.lyapunov:1}, as previously done in this proof we get that
\begin{equation*}
    \liminf_{x \rightarrow +\infty}\lambda(x,+1)=\liminf_{x \rightarrow +\infty}|s(x)U'(x)-s'(x)|+\gamma(x) \geq A +\gamma(x)
\end{equation*}
where $A$ as in (\ref{non.decay.of.rates:1}). At the same time 
\begin{equation*}
    \limsup_{x \rightarrow +\infty}\lambda(x,-1)=\gamma(x).
\end{equation*}
Therefore
\begin{equation*}
    \liminf_{x \rightarrow +\infty}\lambda(x,+1)>\limsup_{x \rightarrow +\infty}\lambda(x,-1),
\end{equation*}
and since $\lambda_Y(y,\theta)=\lambda(f^{-1}(y),\theta)$ we get
\begin{equation*}
    \liminf_{y \rightarrow +\infty}\lambda_Y(y,+1)>\limsup_{y \rightarrow +\infty}\lambda_Y(y,-1).
\end{equation*}
Using the same argument when $x \rightarrow -\infty$ we get that
\begin{equation*}
    \liminf_{y \rightarrow -\infty}\lambda_Y(y,-1)>\limsup_{y \rightarrow -\infty}\lambda_Y(y,+1).
\end{equation*}
 From Lemma 16 of \cite{bierkens.roberts_scaling:2017} the last two inequalities imply that the Zig-Zag process $Y$ is exponentially ergodic. Using the same argument as when we concluded that the SUZZ process with explosive dynamics is uniformly ergodic, we conclude now that the SUZZ process with non-explosive dynamics is exponentially ergodic.

The case where the flow $(x,+1)$ explodes and the one from $(x,-1)$ does not (and vice versa) can be treated using a similar argument.
\end{proof}

\section{Proof of Proposition \ref{efficiency.proposition:1}}
\begin{proof}[Proof of Proposition \ref{efficiency.proposition:1}]
The existence of $\phi$ such that $\phi(x,\theta) \leq C V(x,\theta)$ and such that $\mathcal{L}\phi(x,\theta)=-g(x,\theta)$ is guaranteed by Assumption \ref{the.not.so.good.assumption:1}.
Consider
\begin{equation*}
M_t=\phi(Z_t)-\phi(Z_0)+\int_0^t g(Z_s)ds,
\end{equation*}
which is a local martingale by Dynkin's formula. Assume that $Z$ starts from the invariant measure $\mu$. Under $\mathbb{P}_{\mu}$, $M_t$ is also a martingale since for any $t>0$ and any $s \leq t$, we have
\begin{equation*}
\mathbb{E}_{\mu}[|M_s|] \leq \mathbb{E}_{\mu}[|\phi(Z_s)|]+\mathbb{E}_{\mu}[|\phi(Z_0)|]+\int_0^s\mathbb{E}_{\mu}\left[\left|g(Z_s)\right|\right]ds \leq 2 \mathbb{E}_{\mu}[|\phi|]+t\mathbb{E}_{\mu}[|g|]<\infty.
\end{equation*}
Furthermore $M_t$ has stationary increments. From Theorem 2.1 of \cite{komorowski.landim.olla:12} we have
\begin{equation*}
\frac{M_t}{\sqrt{t}}\xrightarrow{t \rightarrow \infty}N(0,\mathbb{E}[M_1^2])
\end{equation*}
in distribution under $\mathbb{P}_{\mu}$. Also, under $\mathbb{P}_{\mu}$ 
\begin{equation*}
\frac{\phi(Z_t)-\phi(Z_0)}{\sqrt{t}}\xrightarrow{t \rightarrow +\infty}0,
\end{equation*}
since $\phi(Z_t)$ has the same law as $\phi(Z_0)$ and $\mathbb{E}_{\mu}[\phi[Z_t]]=\mathbb{E}_{\mu}[\phi[Z_0]]<\infty$. Therefore
\begin{equation*}
\frac{1}{\sqrt{T}}\int_0^Tg(Z_s)ds \xrightarrow{T \rightarrow \infty}N(0,\mathbb{E}_{Z_0 \sim \mu}[M_1^2])
\end{equation*}
under $\mathbb{P}_{\mu}$. It suffices to prove that $\mathbb{E}_{Z_0 \sim \mu}[M_1^2]$ admits the expression in (\ref{efficiency.as.var:1}). Let $K_t$ be the number of switches before time $t$ and let $T_1,T_2,...$ be the times of the switches. We write
\begin{align*}
&M_t=\phi(Z_t)-\phi(Z_0)-\int_0^t\mathcal{L}\phi(Z_s)ds= \\
&=\int_0^t\Theta_s s(X_s)\phi'(Z_s)ds+\sum_{i=1}^{K_t}\phi(Z(T_i))-\phi(Z(T_i^-)) \\
&-\int_0^t\Theta_s s(X_s)\phi'(Z_s)+\lambda(Z_s)(\phi(X_s,-\Theta_s)-\phi(X_s,\Theta_s)ds=\\
&=\sum_{i=1}^{K_t}\phi(X_{T_i},\Theta_{T_i})-\phi(X_{T_i},-\Theta_{T_i})+\int_0^t\lambda(Z_s)(\phi(X_s,\Theta_s)-\phi(X_s,-\Theta_s))ds
\end{align*}
and therefore (see \cite{kallenberg:02}, Theorem 23.6) the predictable quadratic variation of $M$ is 
\begin{equation*}
\langle  M \rangle_t=\int_0^t\lambda(Z_t) \left ( \phi(X_s,\Theta_s)-\phi(X_s,-\Theta_s) \right )^2ds =4\int_0^t\lambda(Z_t) \left ( \psi(X_s) \right )^2 \ ds
\end{equation*}
where $\psi(x)=\frac{1}{2}\left (\phi(x,+1)-\phi(x,-1) \right )$. Therefore, from the definition of predictable quadratic variation, under $\mathbb{P}_{\mu}$,
\begin{equation}\label{efficiency.as.var:2}
\gamma_g^2=\mathbb{E}_{Z_0 \sim \mu}[M_1^2]=\mathbb{E}_{Z_0 \sim \mu}\langle  M \rangle_1=4\int_E \lambda(x,\theta) \psi^2(x)d\mu(x,\theta).
\end{equation}
It remains to write $\psi$ in terms of $g$. This can be done since for all $(x,\theta) \in E$, $\mathcal{L}\phi(x,\theta)=-g(x,\theta)$ and therefore
\begin{equation*}
\theta s(x) \phi'(x,\theta)+\lambda(x,\theta)(\phi(x,-\theta)-\phi(x,\theta))=-g(x,\theta).
\end{equation*}
Writing down the two equations for $\theta=\pm 1$ and adding them up we get
\begin{equation*}
s(x)(\phi'(x,+1)-\phi'(x,-1))-(\lambda(x,+1)-\lambda(x,-1))(\phi(x,+1)-\phi(x,-1))=-(g(x,+1)+g(x,-1))
\end{equation*}
and therefore
\begin{equation*}
s(x)\psi'(x)-(s(x)U'(x)-s'(x))\psi(x)=-\frac{g(x,+1)+g(x,-1)}{2}.
\end{equation*}
Solving this first order linear ODE we get
\begin{equation*}
\psi(x)=\frac{1}{2\exp \{ -U(x) \} s(x) }\int_x^{+\infty}(g(y,+1)+g(y,-1))\exp\{ -U(y) \}dy,
\end{equation*}
which when combined with (\ref{efficiency.as.var:2}) gives (\ref{efficiency.as.var:1}).
\end{proof}

\section{Verification of Assumptions for one-dimensional simulated targets}\label{verification:000}
Here we will check that using the speed function (\ref{one.dimensional.speed:1}) leads to exponentially or uniformly ergodic algorithms when targeting some of the one-dimensional distributions in Section \ref{efficiency:00}, where we calculate the inverse algorithmic efficiency, and in Section \ref{simulations:00}, where we present numerical simulations. In all cases we used SUZZ algorithms with refresh rate $\gamma(x)=0$, therefore, the upper bound constant $\bar{\gamma}$ appearing in Assumption \ref{ass.lyapunov:1} is $\bar{\gamma}=0$. 

\begin{itemize}
    \item $d=1$, $\pi(x)=\frac{1}{H}\left( 1+\frac{1}{3}x^2 \right)^{-2}$, with speed $s(x)=\left( 1+x^2 \right)^{\frac{1+k}{2}}, k=0,1,2$. This Student($3$) target is used in Sections \ref{efficiency:00} and \ref{simulations:00}.   
     Assumption \ref{non.explo.condition:0} is easily verified since 
    \begin{equation*}
        \lim_{|x| \rightarrow \infty}\pi(x)s(x)= 0, \text{ for } k=0,1,2.
    \end{equation*}
    Furthermore,
    here $U(x)=-\log \pi(x)=2 \log(1+\frac{1}{3}x^2)+\log H$ and $U'(x)=4\frac{x}{3+x^2}$. Straightforward calculations show that for $|x|>1$ large enough such that $\frac{1+x^2}{3+x^2}>1-\frac{1}{10}$,
    \begin{align*}
        |A(x)|&=|s(x)U'(x)-s'(x)|=|x|(1+x^2)^{\frac{k-1}{2}}\left( 4\frac{1+x^2}{3+x^2}-1-k \right)\\
        &\geq |x|(1+x^2)^{\frac{k-1}{2}}\left( 4-4\frac{1}{10}-1-k \right)\geq \frac{6}{10\sqrt{2}}>0
    \end{align*}
    since $k \in [0,2]$ and $|x|\left( 1+x^2 \right)^{-1/2}\geq \frac{1}{\sqrt{2}}$. Therefore Assumption \ref{ass.lyapunov:1} is satisfied for $k=0,1,2$, with $A=\frac{6}{10\sqrt{2}}$. 
    
Therefore, the assumptions of Theorem \ref{uniform.ergodicity:1} are verified. Using similar calculations, we conclude that the assumptions are verified for any Student target with $\nu$ degrees and speed function of the form $s(x)=\left( 1+x^2 \right)^{\frac{1+k}{2}}$ as long as $k<\nu$. The SUZZ$(0)$ algorithm is then exponentially ergodic and the SUZZ($k$) algorithm with $k>0$ is uniformly ergodic, due to Theorem \ref{uniform.ergodicity:1}.

\item $d=1$, $\pi(x)=\frac{1}{H}\exp\{ - \left( 1+x^2 \right)^{1/4} \}$, with speed $s(x)=\left(  1+x^2 \right)^{\frac{1+k}{2}}, k=0, 1, 2, 3$. This sub-exponential($0.5$) target is used in Section \ref{efficiency:00}. Assumption \ref{non.explo.condition:0} is easily verified since
\begin{equation*}
    \lim_{|x| \rightarrow \infty}\pi(x)s(x)=0, \text{ for } k=0,1,2,3.
\end{equation*}
Furthermore, here $U(x)=-\log \pi(x)=\left(  1+x^2 \right)^{1/4}+\log H$ and $U'(x)=\frac{1}{2}x\left(  1+x^2 \right)^{-3/4}$. Therefore,
\begin{align*}
    |A(x)|=|s(x)U'(x)-s'(x)|=|x| \left(  1+x^2 \right)^{\frac{k-1}{2}} \left( \frac{1}{2}\left( 1+x^2 \right)^{\frac{1}{4}}-\left( 1+k \right)   \right) \xrightarrow{|x| \rightarrow \infty}+\infty,
\end{align*}
which verifies Assumption \ref{ass.lyapunov:1}. Therefore, the assumptions of Theorem \ref{uniform.ergodicity:1} are verified. The SUZZ(0) algorithm is then exponentially ergodic and the SUZZ($k$) algorithm with $k=1, 2, 3$ is uniformly ergodic.
\item The $d=1$ Exponential and Normal targets appearing in Section \ref{efficiency:00} can be shown to verify the assumptions of Theorem \ref{uniform.ergodicity:1} in  similar way.
\end{itemize}

\end{appendix}

\section*{Supplement A: Algorithmic description of Speed Up Zig-Zag process}\label{algorithm:00}

In algorithmic terms the SUZZ process is described as follows.
\begin{algorithm}[Speed Up Zig-Zag]\label{algorithm.suzz:1}
\text{ }
\begin{enumerate}
\item Set $t=0$.
\item Start from point $(X_{t},\Theta_{t})=(x,\theta) \in E$.
\item The process $(X_{t+u},\Theta_{t+u})$ moves according to the deterministic ODE system 
\begin{equation}\label{ODE:1}
\left\{\begin{array}{l}
\dfrac{d}{du}X_{t+u}=\dfrac{d\Phi_u(x,\theta)}{du}=\theta s(X_{t+u}) , u \geq 0 \vspace{1.5mm}\\ 
\dfrac{d}{du}\Theta_{t+u}=0, u \geq 0 \vspace{1.5mm}\\
X_{t}=x , \Theta_{t}=\theta.
\end{array}\right.
\end{equation}
\item For every coordinate $i \in \{ 1,...,d \}$ construct a Poisson Process with intensity $\{m_i(u)=\lambda_i(\Phi_u(x,\theta), \theta) , u \geq 0 \}$.
\item Let $\tau_i$ be the first arrival time of the $i$th Poisson Process, i.e. for all $t_0 \geq 0$, $\mathbb{P}(\tau_i \geq t_0)=\exp \left\{ -\int_0^{t_0}m_i(u)du \right\}$. Let $j=\argmin\{ \tau_i, i=1,...,d \}$ and $\tau=\tau_j$ the first arrival time of all the processes.
\item For $u \in [0,\tau)$ set $X_{t+u}=\Phi_u(x,\theta)$ and $\Theta_{t+u}=\theta$.
\item Set $t=t+\tau$, $x=\Phi_{\tau}(x,\theta)$, $X_{t}=x$ and $\Theta_{t}=F_j(\theta)$.
\item Repeat from the Step 2.
\end{enumerate}
\end{algorithm}

\section*{Supplement B: Proof of Lemma \ref{continuous.component.lemma:1}}


The proof is very similar in spirit to the proof of Lemma 8 (Continuous Component) of \cite{bierkens.roberts.zitt:2019}. The main idea is that since there is an admissible path from $(x,\theta) \looparrowright (y,\eta)$, the process has a positive probability to follow some path very close to the admissible path and therefore starting from somewhere close to $(x,\theta)$ to end up somewhere close to $(y,\eta)$. The difference is that this time we allow explosive deterministic dynamics. Therefore, contrary to the original Zig-Zag, the process is not guaranteed to stay inside a fixed ball for a large and fixed time horizon. This means that the Poisson thinning construction that the authors propose to get the result cannot be applied here directly. However, in order to get the result, we only need to consider paths that are close to the admissible path. These paths need to lie on a fixed ball where the hazard rates are bounded and therefore we can use Poisson thinning to construct this type of paths. This gives us the result as shown in the next proof.
\begin{proof}[Proof of Lemma \ref{continuous.component.lemma:1}]
Since $(x,\theta)\looparrowright(y,\eta)$ there exists an admissible control sequence $u=(t,\iota)=(t_0,...,t_m,i_1,...,i_m)$ starting from $(x,\theta)$ with $\Psi_u(x,\theta)=(y,\eta)$. Let $U'$ be a small neighbourhood of $x$ and $B(0,R)$ a ball large enough to contain the paths induced by $u$ starting from  $(x',\theta)$ for every $x' \in U'$. Also, pick $\epsilon$ small enough to ensure that for any $x' \in U'$, if the process starts from the end of any path induced by $u$ and $(x',\theta)$, i.e. from $\Psi_u(x',\theta)$, then the process a.s.\ stays inside the ball $B(0,R)$ until time $\epsilon$. Such an $\epsilon$ can be picked due to Lemma \ref{locality.lemma:1}. Since the rates of the process are continuous functions, they are bounded above in $B(0,R)$ say by $\bar{\lambda}$. Let $\tau_k=\sum_{i=0}^{k-1}t_i$ and let $\underline{\lambda}>0$  such that 
\begin{equation*}
\lambda_{min}(t,\iota)=\min_{k \in \{ 0,...,m-1 \}}\lambda_{i_k}(X_{\tau_k},\Theta_{\tau_k})>\underline{\lambda},
\end{equation*}
i.e. $\underline{\lambda}$ be a lower bound on the rates at the switching points of the admissible path. Then from continuity of the rates (and possibly by making $U'$ smaller) we can find $(U_k)_{k=0}^{m-1}$ non-intersecting small neighbourhoods of $\tau_k$ such that for any control sequence $(s,\iota)=(s_0,...,s_m,i_1,...,i_m)$ with the property that $\sum_{j=0}^{k}s_j \in U_{k}$ for all $k \in \{ 0,...,m-1 \}$ and for any starting point $(x',\theta)$ such that $x' \in U'$, we have
\begin{equation}\label{lower.bound.rates.again:1.5}
\lambda_{min}(s,\iota) \geq \underline{\lambda}>0,
\end{equation}
and the path induced by the starting point $(x',\theta)$ and the control sequence $(s,\iota)$ lies in $B(0,R)$. Let $T_1,T_2,...$ be the switching times of the process. On the event that there are $m$ switches before time $t$, we introduce the function 
\begin{equation*}
\Omega(x,\theta,t,T_1,...,T_m)=x+T_1\theta+(T_2-T_1)F_{i_1}(\theta)+...+(t-T_m)F_{i_1,...,i_m}(\theta),
\end{equation*}
that is the end point of the path until time $t$ when the order in which the coordinates change is $i_1,i_2,...,i_m$.

The random variable $\Omega(x,\theta,T_1,...,T_m)$ can be simulated using Poisson thinning. As a bounding process we can use the hazard rate $\Lambda(y,\eta)$, defined to be equal to $d\bar{\lambda}$ for all $x \in B(0,R)$ and defined to be equal to $\lambda(y,\eta)$ for all $y \notin B(0,R)$. Using the exponential representation of Poisson process along with Poisson thinning, we can construct the first $m$ switches of the process using i.i.d. $\tilde{E}_1,\tilde{E}_2,... \sim \exp(1)$ and i.i.d. $u_1,u_2,... \sim unif(0,1)$. We then simulate $T_1^{prop}$ from the bounding process $\Lambda$ such that 
\begin{equation*}
T_1^{prop}=\inf\{ t \geq 0 : \int_0^{t}\Lambda(X_s,\theta)ds \geq \tilde{E}_1 \}.
\end{equation*} 
We then accept this time on the event that $Acc_{1}=\{ u_1 \leq \lambda(X_{T_1^{prop}},\theta)/\Lambda(X_{T_1^{prop}},\theta) \}$. If we accept, we set $T_1=T_1^{prop}$. Else, we start again the process from $(X_{T_1^{prop}},\theta)$, we pick a new $T_2^{prop}$ according to $\tilde{E}_2$ and we add it to the previous $T_1^{prop}$. We keep doing that until we accept. Suppose that the $j_1$th proposal was the one that was accepted. Then we decide which coordinate of the velocity to change according to the value of $u_{j_1}$. More specifically, we change the $i_1$ coordinate (which is the first coordinate to be switched according to the control sequence $u$) when the event $B_{j_1}$ occurs, where $B_{j_1}=\{ u_{j_1} \leq \lambda_{i_1}(X_{T_1},\theta)/\Lambda(X_{T_1},\theta) \} \subset Acc_{j_1}$.

Using the same construction, we construct the second switching time $T_2$ and if $j_2$ is
the time to accept the switch, then we decide which coordinate to switch according to the value $u_{j_2}$ took and we change the $i_2$ coordinate (which is the second coordinate to be switched according to control sequence $u$) when the event $B_{j_2}=\{ u_{j_2} \leq \lambda_{i_2}(X_{T_2},F_{i_1}(\theta))/\Lambda(X_{T_2},F_{i_1}(\theta)) \} \subset Acc_{j_2}$ occurs.

Now, let us condition on the following event $B$: {\it $u_1,...,u_m$ took values less than $\underline{\lambda}/d\bar{\lambda}$ and $\tilde{E}_1,...,\tilde{E}_m$ took values such that for all $k \in \{ 1,...,m \}$, $\left(d\bar{\lambda}\right)^{-1}\sum_{i=1}^k\tilde{E}_i \in U_{k-1}$, where $U_k$ are the open subsets of $\mathbb{R}^+$ defined just before equation (\ref{lower.bound.rates.again:1.5}). Furthermore, if $\tau_m$ is the ending time of the admissible path given by the control sequence $u$, then $\tilde{E}_{m+1}$ took a value larger than $\left( \tau_m+\epsilon-\inf U_{m-1} \right) d\bar{\lambda}$, where $\epsilon$ was introduced in the beginning of the proof.
} 

Note that $B$ has a positive probability $c$ to occur that does not depend on the starting point of the path. Let us consider what happens if event $B$ occurs. In the beginning, the process moves in a straight line in direction $\theta$ and stays inside the ball of radius $B(0,R)$. This means that the bounding process is equal to $d\bar{\lambda}$ by construction. Since $a_1=\frac{1}{d\bar{\lambda}}\tilde{E}_1 \in U_1$, this means that $\tilde{E}_1=\int_0^{a_1}d\bar{\lambda}\ ds=\int_0^{a_1}\Lambda(X_s,\theta)ds$ and therefore $T_1^{prop}=a_1 \in U_1$. At the same time $u_1 \leq \underline{\lambda}/\bar{\lambda}$ therefore the first switch was accepted and the coordinate to be switched was $i_1$.

Using the same line of argument and since $\left(d\bar{\lambda}\right)^{-1}\left( \tilde{E}_1+ \tilde{E}_2 \right) \in U_2$ we can guarantee that until the second proposal time $T_2^{prop}$ the process will remain inside the ball $B(0,R)$ so the bounding rate will be $d\bar{\lambda}$ and therefore the proposed switching time will occur inside $U_2$. Since $u_2 \leq \underline{\lambda}/\bar{\lambda}$ the switch is accepted and the coordinate to switch is $i_2$.

Using induction we see that for all $k \in \{ 1,...,m \}$ the $k$th switching time $T_k$ occurred inside $U_k$ and the $i_k$ coordinate was the one to switch.

 Furthermore, from time $T_m$ until $\tau_m+\epsilon$, the process is guaranteed to not leave the ball $B(0,R)$ by construction of $\epsilon$. Therefore, the bounding process until $\tau_m+\epsilon$ is $d\bar{\lambda}$ and since $\tilde{E}_{m+1}>\left( \tau_m+\epsilon-\inf U_{m-1} \right) d\bar{\lambda}$ the process is guaranteed to not switch the velocity until time $\tau_{m+1}+\epsilon$. 

This means that given event $B$ occurs and if $T_1,T_2,...$ are the switching events, then for any $t \in [\tau_m,\tau_m+\epsilon]$, $X_t=\Omega(x,\theta,t,T_1,...,T_m)$. Furthermore, conditioning on $B$, all times $T_1,...,T_m$ are distributed according to first arrival times of the homogeneous bounding Poisson process, conditioned on taking values on the sets $U_0,U_1,...,U_{m-1}$. Therefore, $T_k \sim unif(U_{k-1})$ for all $k=1,...,m$. Conditioning on event $B$ occurring, we write
\begin{equation*}
\mathbb{P}_{x,\theta}\left( X_t\in \cdot , \Theta_t=\eta \right) \geq  c \ \mathbb{P}_{x,\theta} \left( \Omega(x,\theta,t,T_1,...,T_m) \in \cdot  , \Theta_t =\eta \right),
\end{equation*}
where  $T_k\sim unif(U_{k-1})$. 
Recall, that $c$ does not depend on the starting position $(x,\theta)$. We can use the same argument for every starting point $(x',\theta)$ for any $x' \in U'$ and get that for any $t \in [\tau_m,\tau_m+\epsilon]$,
\begin{equation}\label{continuity.lemma.equation:1.5}
\mathbb{P}_{x',\theta}\left( X_t\in \cdot , \Theta_t=\eta \right) \geq  c \ \mathbb{P}_{x',\theta} \left( \Omega(x',\theta,t,T_1,...,T_m) \in \cdot  , \Theta_t =\eta \right).
\end{equation}
Now, since $(x,\theta)\looparrowright (y,\eta)$ we can assume that $\{ 1,...,d \} \subset \{ i_1,...,i_m \}$. Therefore, for all $t \in [\tau_m,\tau_m+\epsilon]$, the (up to translation) linear map  $(u_1,...,u_m)\rightarrow \Omega(x,\theta,t,u_1,...,u_m)$ is of full rank since its matrix has column vectors
\begin{equation*}
\left \{ \theta-F_{i_1}(\theta),...,F_{i_1,...,i_{m-1}}(\theta)-F_{i_1,...,i_m}(\theta) \right \}= \left \{ \pm 2 e_{i_1},...,\pm 2 e_{i_m} \right \}= \left \{ \pm 2 e_1,..., \pm 2 e_d  \right \}.
\end{equation*}
From Lemma 6.3 of \cite{benaim.leborgne.malrieu.zitt:15} we get that there exists a neighbourhood $U_t$ of $x$, a neighbourhood $V_t$ of $y$ and  a constant $c'>0$, such that for all $x' \in U_t$,
\begin{equation*}
\mathbb{P}_{x',\theta} \left( \Omega(x',\theta,t,T_1,...,T_m) \in \cdot , \Theta_t =\eta\right) \geq c' \lambda(\cdot \cap V_t).
\end{equation*}
Now, since $\Omega(x',\theta,t,T_1,...,T_m)=x'+T_1\theta+(T_2-T_1)F_{i_1}(\theta)+...+(t-T_m)F_{i_1,...,i_m}(\theta)$, a change in $t$ effects on $\Omega$ as a translation in direction $F_{i_1,...,i_m}(\theta)$. This means that for every $t \in [\tau_m,\tau_m+\epsilon]$ if we pick a starting point $(x',\theta)$ with $x' \in U_{\tau_m}$ we get for all $A \in \mathcal{B}(\mathbb{R})$,
\begin{align*}
&\mathbb{P}_{x',\theta} \left( \Omega(x',\theta,t,T_1,...,T_m) \in A , \Theta_t =\eta\right)= \\
&=\mathbb{P}_{x',\theta} \left( \Omega(x',\theta,\tau_m,T_1,...,T_m) \in A-(t-\tau_m)F_{i_1,...,i_m}(\theta) , \Theta_t =\eta\right)\\
& \geq  c' \lambda(\left( A-(t-\tau_m)F_{i_1,...,i_m}(\theta) \right) \cap V_{\tau_m}) \geq c' \lambda(A \cap \left( V_{\tau_m}+(t-\tau_m)F_{i_1,...,i_m}(\theta) \right)).
\end{align*}
If $\epsilon>0$ is picked small enough then $\cap_{t\in [\tau_m,\tau_m+\epsilon]}\left( V_{\tau_m}+(t-\tau_m)F_{i_1,...,i_m}(\theta) \right)$ is not empty and contains an open set $V_x$. Then, for all $x' \in U_{\tau_m}$, and all $A \in \mathcal{B}(\mathbb{R})$,
\begin{equation*}
\mathbb{P}_{x',\theta} \left( \Omega(x',\theta,t,T_1,...,T_m) \in A , \Theta_t =\eta\right) \geq c' \lambda(A \cap V_x).
\end{equation*}
Overall, using (\ref{continuity.lemma.equation:1.5}), for all $x' \in U_{\tau_m} \cap U'$, for all $t \in [\tau_m,\tau_m+\epsilon]$ and all $A \in \mathcal{B}(\mathbb{R})$,
\begin{equation*}
\mathbb{P}_{x',\theta}\left( X_t \in A , \Theta_t=\eta \right) \geq c \ \mathbb{P}_{x',\theta} \left( \Omega(x',\theta,t,T_1,...,T_m) \in A , \Theta_t =\eta \right) \geq c \ c'\lambda(A \cap V_x),
\end{equation*}
which proves the result.
\end{proof}

\end{document}